\newcommand{\mc}{\mathcal}
\newcommand{\pt}{\partial}
\newcommand{\br}{\mathbb{R}}
\newcommand{\bt}{\mathbb{T}}
\newcommand{\e}{\varepsilon}
\renewcommand{\(}{\left(}
\renewcommand{\)}{\right)}
\renewcommand{\[}{\left[}
\renewcommand{\]}{\right]}
\newcommand{\bb}{\mathbb}
\newtheorem{thm}{Theorem}
\newtheorem{lem}[thm]{Lemma}
\newtheorem{prop}[thm]{Proposition}
\newtheorem{defi}[thm]{Definition}
\newtheorem{remark}[thm]{Remark}
\def\be{\begin{equation}}
\def\ee{\end{equation}}
\def\bea{\begin{eqnarray}}
\def\eea{\end{eqnarray}}
\numberwithin{thm}{section}
\numberwithin{equation}{section}
\newcommand*\di{\mathop{}\!\mathrm{d}}
\title{Singular Limits for Plasmas with Thermalised Electrons}
\author{
Megan Griffin-Pickering 
  \thanks{University of Cambridge, DPMMS Centre for Mathematical Sciences, Wilberforce Road, Cambridge CB3 0WB, UK. Email: \textsf{m.griffin-pickering@maths.cam.ac.uk}}
  \and
Mikaela Iacobelli
  \thanks{Durham University, Department of Mathematical Sciences, Lower Mountjoy, Stockton Road, Durham DH1 3LE, UK. Email: \textsf{mikaela.iacobelli@durham.ac.uk}}
}
\begin{document}

\maketitle

\begin{abstract}
This work is concerned with the study of singular limits for the Vlasov-Poisson system in the case of massless electrons (VPME), which is a kinetic system modelling the ions in a plasma.
Our objective is threefold: first, we provide a mean field derivation of the VPME system in dimensions $d=2,3$ from a system of $N$ extended charges. Secondly, we prove a rigorous quasineutral limit for initial data that are perturbations of analytic data, deriving the Kinetic Isothermal Euler (KIE) system from the VPME system in dimensions $d=2,3$. Lastly, we combine these two singular limits in order to show how to obtain the KIE system from an underlying particle system.

\end{abstract}


\section{Introduction}

In this article, we will study a hierarchy of models for plasma. A plasma forms when a neutral gas undergoes a process of dissociation, so that some of the gas particles split into electrons and positively charged ions. These charged particles can interact with each other through the electromagnetic fields they generate. This long-range effect is the dominant form of interaction within the plasma.

As a mathematical idealisation, we can model a plasma as a system of point particles. We have two distinguished species of particle - ions and electrons. The ions have a much higher mass than electrons, which leads to a separation of timescales between the two species. The electrons typically evolve on a much faster timescale than the ions. This makes it reasonable to consider the two species separately.

Suppose first that we would like to model the electrons. In this case, it is usual to take the ions to be fixed in a spatially uniform distribution. This is a reasonable assumption since the ions evolve on a much slower timescale than the electrons. A well-known model for this situation is the \textit{classical Vlasov-Poisson system:}
\begin{equation}
\label{vp}
(VP):= \left\{ \begin{array}{ccc}\pt_t f+v\cdot \nabla_x f+ E\cdot \nabla_v f=0,  \\
E=-\nabla U, \\
\Delta U=1- \int_{\br^d} f\, dv= 1- \rho,\\
f\vert_{t=0}=f_{0}\ge0,\ \  \int_{\bt^d \times \br^d} f_{0}\,dx\,dv=1.
\end{array} \right.
\end{equation}
Here we are using a statistical description of the electrons, through a time dependent (probability) density function $f_t(x,v)$ on the phase space $\bt^d \times \br^d$. We note that VP is an appropriate model in the electrostatic approximation when magnetic effects are negligible.

\smallskip

Here, we are interested in the reverse case: we wish instead to model the ions in the plasma. To do this, we need to account for the behaviour of the electrons. As we have explained, the electrons are much lighter than the ions and so evolve much more quickly. This means that they undergo collisions at a much faster rate, and hence rapidly approach thermodynamic equilibrium. We will therefore assume that the electrons are thermalised. They follow a Maxwell-Boltzmann law so that their spatial density is given by $e^U$, where $U$ is the electrostatic potential in the plasma. The corresponding kinetic equation is the \textit{Vlasov-Poisson system with massless electrons} (VPME):
\begin{equation}
\label{vpme}
(VPME):= \left\{ \begin{array}{ccc}\pt_t f+v\cdot \nabla_x f+ E\cdot \nabla_v f=0,  \\
E=-\nabla U, \\
\Delta U=e^{U}- \int_{\br^d} f\, dv=e^{U}- \rho,\\
f\vert_{t=0}=f_{0}\ge0,\ \  \int_{\bt^d \times \br^d} f_{0}\,dx\,dv=1.
\end{array} \right.
\end{equation}
The nonlinearity in the Poisson equation is the key difference between the classical and VPME systems, and a source of additional mathematical richness.
The VPME system has been studied less widely than the classical system, due to the additional difficulties created by this nonlinear coupling. 
In particular, while the global well-posedness in two and three dimensions is well understood in the classical case (see for example \cite{Ukai-Okabe}, \cite{Pfaffelmoser},\cite{Schaeffer}, \cite{BR}, \cite{Lions-Perthame}), this problem remained completely open for massless electrons until very recently.
Indeed, at least to our knowledge, the only general result on this model in dimension $d=2,3$ 
was the global existence of weak solutions shown by Bouchut in \cite{Bouchut}. In a very recent paper \cite{IGP-WP} we filled this gap by proving uniqueness for VPME in the class of solutions with bounded density, and global existence of solutions with bounded density for a general class of initial data, generalising to this setting all the previous results known for VP. See also the recent paper by Bardos-Golse-Nguyen-Sentis \cite{BGNS18}  for some related results on a variant of VPME.
\smallskip

Kinetic equations such as VP and VPME provide a medium scale or `mesoscopic' description of physical systems. They lie between a microscopic description, in which one tracks the dynamics of individual particles, and a coarser macroscopic description. Our main goal in this work is to study two types of limit that connect the VPME system \eqref{vpme} to related macroscopic and microscopic models: the \textit{quasineutral limit} and the \textit{mean field limit}. The limits we study were previously proved rigorously in particular regimes for the classical system \eqref{vp}. Here, we will prove analogous results for the VPME system \eqref{vpme}. We will first  derive the VPME system in dimensions $d = 2, 3$ from a system of $N$ extended charges, with assumptions on the choice of parameters that are the same as the ones found by Lazarovici for the classical VP system in \cite{Laz}. Then we will show the validity of the quasineutral limit for initial data that are perturbations of analytic data, deriving the Kinetic Isothermal Euler (KIE) system from the VPME system in dimensions $d = 2, 3$. Finally, we will combine these two singular limits in order to recover the KIE system from the underlying particle system, in analogy with the results available for the VP system. To conclude our analysis we will show that the regime of parameters that allows the mean field and the quasineutral limit to ``commute'' is compatible with a ``large'' set of admissible configurations.

\begin{paragraph}{The quasineutral limit}

A plasma has a characteristic scale, known as the Debye length, that describes the scale of electrostatic interaction within the plasma. For instance, it characterises the scale of charge separation between electrons and ions. We introduce a small parameter $\e := \frac{\lambda_D}{\Lambda}$, where $\lambda_D$ is the Debye length and $\Lambda$ is the scale of observation. Under an appropriate scaling, the VPME system becomes
\begin{equation}
\label{vpme-quasi}
(VPME)_\e:= \left\{ \begin{array}{ccc}\pt_t f_\e+v\cdot \nabla_x f_\e+ E_\e\cdot \nabla_v f_\e=0,  \\
E_\e=-\nabla U_\e, \\
\e^2\Delta U_\e=e^{U_\e}- \int_{ \br^d} f_\e\, dv=e^{U_\e}- \rho_\e,\\
f_\e\vert_{t=0}=f_{0,\e}\ge0,\ \  \int_{\bt^d \times \br^d} f_{0,\e}\,dx\,dv=1.
\end{array} \right.
\end{equation}
In real plasmas, the Debye length is typically very small. In this case, the plasma is called \textit{quasineutral}, in reference to the fact that the plasma appears to be neutral overall at the observation scale. In the physics literature, quasineutrality is often included in the very definition of plasma. It is therefore interesting to consider the limit in which $\e$ tends to zero. The formal limit of \eqref{vpme-quasi} is the \textit{kinetic isothermal Euler} system:
\be
\label{KIE}
(KIE):= \left\{ \begin{array}{ccc}\pt_t f+v\cdot \nabla_x f+ E\cdot \nabla_v f=0,  \\
E=-\nabla U, \\
U= \log \rho,\\
f_0\ge0,\ \  \int_{\bt^d \times \br^d} f_0\,dx\,dv=1.
\end{array} \right.
\end{equation}
A related equation, derived in the quasineutral limit from a version of \eqref{vpme-quasi} in which the electron density $e^U$ is approximated by the linearisation $1 + U$, was named \textit{Vlasov-Dirac-Benney} by Bardos and studied in \cite{Bardos-Besse} and \cite{Bardos-Nouri}. In the classical case, where $E$ is replaced by a pressure term which is a Lagrange multiplier corresponding to the constraint $\rho = 1$, Bossy-Fontbona-Jabin-Jabir \cite{BFJJ} showed local-in-time existence of analytic solutions in the one-dimensional case. Global existence of weak solutions is not known for \eqref{KIE}.

The rigorous justification of the quasineutral limit is a non-trivial and subtle problem. The limit has a direct correspondence to a long-time limit for the Vlasov-Poisson system, and is therefore vulnerable to known instability mechanisms inherent to the physical system under consideration. In fact, for the classical system \eqref{vp}, it was shown by Hauray and Han-Kwan in \cite{HKH} that the quasineutral limit is false in general if the initial data are assumed to have only Sobolev regularity. 

Rigorous results on the quasineutral limit go back to the works of Brenier-Grenier \cite{BG} and Grenier \cite{Grenier95} for the classical system \eqref{vp}. A result of particular relevance for our purposes is the work of Grenier \cite{Grenier96}, proving the limit for the classical system assuming uniformly analytic data. The works of Han-Kwan-Iacobelli \cite{IHK2,IHK1} extended this result to data that are very small, but possibly rough, perturbations of the uniformly analytic case, in dimension 1, 2 and 3. 

In the massless electrons case, Han-Kwan-Iacobelli \cite{IHK1} showed a rigorous limit in dimension one, again for rough perturbations of analytic data, while Han-Kwan-Rousset \cite{HKR} consider Penrose-stable data with sufficiently high Sobolev regularity. In this work, we extend the results of \cite{IHK1} to higher dimensions by showing a rigorous quasineutral limit for the VPME system \eqref{vpme-quasi} in dimension 2 and 3, for data that are very small, but possibly rough, perturbations of some uniformly analytic functions.
\end{paragraph}

\begin{paragraph}{The mean field limit}
The second type of limit we will consider is the \textit{mean field limit}. This refers to the general problem of deriving a Vlasov equation, such as the Vlasov-Poisson system, from an underlying microscopic particle system. In a typical formulation of this problem, one considers a system of $N$ point particles evolving under the influence of binary interactions between the particles, described by an interaction force $\nabla \Phi$ derived from a potential $\Phi$, and possibly an external force $\nabla \Psi$ arising from a potential $\Psi$. The dynamics are modelled by a system of ODEs describing the phase space position $(X_i, V_i)_{i=1}^N$ of each of the $N$ particles: 
\be \label{ODE-gen}
\begin{cases}
\dot X_i = V_i \\
\dot V_i = \frac{1}{N} \sum_{j \neq i} \nabla_x \Phi(X_i - X_j) + \nabla \Psi(X_i) .
\end{cases}
\ee
The factor $1/N$ appears due to a choice of scaling, designed to be the appropriate one to obtain a Vlasov equation in the limit.

The connection to a Vlasov equation is formulated via the \textit{empirical measure} $\mu^N$ associated to \eqref{ODE-gen}, defined by:
\be \label{def:mu}
\mu^N(t) : = \frac{1}{N} \sum_{i=1}^N \delta_{(X_i(t), V_i(t))} .
\ee
If $\mu^N$ converges to some measure $f$ as $N$ tends to infinity, then we expect $f$ to be a solution of the associated \textit{Vlasov equation}
\be \label{vlasov}
\partial_t f + v \cdot \nabla_x f + (\nabla \Phi \ast_x \rho_f + \nabla \Psi) \cdot \nabla_v f = 0 .
\ee
By `deriving \eqref{vlasov} from the particle system \eqref{ODE-gen}', we mean showing rigorously that $\mu^N$ converges to $f$ in the sense of measures, where $f$ is a solution of \eqref{vlasov}, assuming convergence of the initial data. 

Both the VP and the VPME systems fit into this general framework. We can see this by introducing the Green function for the Laplacian on the torus. This is a function $G$ satisfying
\be \label{def:G}
\Delta G = \delta_0 - 1.
\ee
We also define the Coulomb kernel $K = \nabla G$. More explicitly, $G$ may be written in the form
\be \label{def:G0}
G(x) = \begin{cases}
\frac{1}{2 \pi} \log{|x|} + G_0(x) & d=2 \\
- \frac{1}{4 \pi |x|} + G_0(x) & d=3 ,
\end{cases}
\ee
for some smooth function $G_0$. For a proof of this representation, see \cite{Titch} or \cite[Lemma 2.1]{IGP}.

In the case of the classical VP system \eqref{vp}, we have the representation
\be
E = K \ast \rho .
\ee
Thus \eqref{vp} is of the form \eqref{vlasov}, where we choose the kernel $\Phi = G$. Of course, in this case, $\nabla \Phi = K$ has a very strong singularity at the origin, of order $|x|^{-(d-1)}$. This singularity is a key source of difficulty for the mathematical study of the Vlasov-Poisson system, particularly for the mean field limit.

Similarly, for the VPME system, we can represent the force in the form
\be
E = K \ast \rho - K \ast e^U .
\ee
We can think of the VPME system as being of the form \eqref{vlasov} by taking $\Phi = G$ and an `external' potential $\Psi = G \ast e^U$. Of course $\Psi$ is not truly an external potential because $U$ depends nonlinearly on $f$.

Early works on the mathematical justification of the mean field limit include, among others, Braun-Hepp \cite{Braun-Hepp} and Neunzert-Wick \cite{Neunzert-Wick}. Dobrushin \cite{Dob} proved a rigorous limit for Lipschitz force fields ($\nabla \Phi, \nabla \Psi \in W^{1,\infty}$). The approach relies on the observation that the empirical measure $\mu^N$ is a weak solution of the Vlasov equation \eqref{vlasov}, if the forces are sufficiently regular. Dobrushin proved the mean field limit by showing a stability result for \eqref{vlasov} in the class of measure solutions, using an estimate in a Wasserstein distance. The paper \cite{Dob} is a direct ancestor of much of the modern work on the subject. See the reviews of Golse \cite{Golse} and Neunzert \cite{Neunzert} for further background and literature.

In many physical systems, the force is described by an inverse power law $|\nabla \Phi| \sim |x|^{- \alpha}$. In this case, the singularity at the origin prevents the application of Dobrushin's results.
Notice that the VP and VPME systems correspond to the strongly singular case $\alpha = d-1$, and no stability results are known in these cases. Hence, to deal with the mean field limit for VMPE, we will consider a suitable regularisation of the microscopic particle system.

There have been several works aimed at deriving Vlasov equations with singular forces from regularised particle systems.
For instance, Hauray-Jabin \cite{Hauray-Jabin} considered a truncation method in which the force is cut off below a certain distance from the origin $r_N$, dependent on the number of particles $N$. They showed that the mean field limit holds for a large set of initial configurations, for inverse power law forces with $\alpha < d-1$ (in particular not the Vlasov-Poisson case), from a particle system with force truncated at $r_N$, provided that $r_N$ converges to zero sufficiently slowly as $N$ tends to infinity. They also proved a true mean field limit, without truncation, for the case of `weakly singular' forces in which $\alpha < 1$. More recently, Lazarovici-Pickl \cite{Laz-Pickl} achieved a similar result for the classical Vlasov-Poisson case $\alpha = d-1$ with this type of truncation, with $r_N \sim N^{- \frac{1}{d} + \eta}$ for any $\eta > 0$. They use a law of large numbers approach to compare the mean field force from the particle system to the limiting force. Their results show that there exists a large set of initial configurations for which the mean field limit holds, but it is not possible to identify them from the initial configurations alone, since the argument relies on a law of large numbers throughout the evolution.
In a different direction, Lazarovici \cite{Laz} considered the alternative method of regularisation by convolution. In this approach, the point particles are replaced by delocalised packets of charge, with some smooth, compactly supported shape $\chi$, fixed throughout the evolution. For the classical Vlasov-Poisson case, this results in the particle system
\be \label{ODE-VP-reg}
\begin{cases}
\dot X_i = V_i \\
\dot V_i = \frac{1}{N} \sum_{j \neq i} \[\chi \ast_x \nabla_x G \ast_x \chi \](X_i - X_j) .
\end{cases}
\ee
The shape is then allowed to depend on a regularisation parameter $r$ by taking $\chi_r(x) : = r^{-d} \chi \left ( \frac{x}{r} \right ) $. Lazarovici showed that a mean field limit holds with high probability, provided that $r_N \geq C N^{-\frac{1}{d(d+2)} + \eta}$ for some $\eta > 0$. The admissible configurations are identified by a condition on the initial configuration alone. The appearance of the double regularisation $\chi_r \ast_x \nabla_x G \ast_x \chi_r$ is very important in this analysis. This type of regularisation was previously considered by Horst \cite{Horst} in the Vlasov-Maxwell case and later used by Rein \cite{Rein-book}; a version also appears in Bouchut \cite{Bouchut}. It has the advantage that the microscopic dynamics correspond to a Hamiltonian system, for which the corresponding energy converges as $r$ tends to zero to the energy of the true Vlasov-Poisson system. In this paper, we will prove a regularised mean field limit of this type for VPME that is, to the best of our knowledge, the first derivation of this system from the particle dynamics. As noted in Remark \ref{rmk:assumptions} we recover the same assumptions on $r$ as the ones obtained by Lazarovici for the VP system in \cite{Laz}. Furthermore, as already mentioned, we will prove a series of quantitative estimates that allow us to relate the mean field limit to the quasineutral one.

\end{paragraph}

\section{Structure of the paper and main results}
In section \ref{prelim} we start by giving the basic definitions and preliminary results needed to state our main results.
In sections \ref{sec:statement-quasi}, \ref{sec:statement-MF}, \ref{sec:statement-MFQN} we state, respectively, Theorem \ref{thm:quasi} about the convergence in the quasineutral limit, Theorem \ref{thm:MFL} concerning the mean field limit, and Theorem \ref{thm:MFQN} where we combine these two regimes. Moreover in section \ref{sec:statement-typicality} we state Theorems \ref{thm:typ-MF} and \ref{thm:typ-MFQN} where we prove typicality results that complete our analysis.

These results are based on some estimates on the electric field that will be performed in Section \ref{sec:electric}. In Section \ref{sec:stability} we establish a quantitative strong-strong stability estimate \`a la Loeper between solutions of the VPME system. This will be a crucial step towards the proof of the quasineutral limit.
Since the strong-strong stability estimates rely on the $L^\infty$ bounds on the densities associated to the solutions of the VPME system, in Section \ref{sec:growth} we will study how the support of solutions of the VPME system grow in time in order to have a control on such an $L^\infty$ norm. As explained later, we will need to develop two different proofs for the two and three dimensional case, respectively in Sections \ref{sec:growth2d} and \ref{sec:growth3d}.
All the analysis performed in these sections will be used in Sections \ref{sec:QN}, \ref{sec:MFL}, and \ref{sec:proof-MFLQN} to prove the main Theorems \ref{thm:quasi}, \ref{thm:MFL}, and \ref{thm:MFQN}, respectively.
Finally, in Section \ref{sec:proof-typicality} we prove Theorems \ref{thm:typ-MF} and \ref{thm:typ-MFQN}.

\subsection{Preliminary definitions} \label{prelim}

We begin by introducing some important quantities and technical tools needed to state our results.

\begin{paragraph}{Energy:}
We introduce the energy associated to the Vlasov-Poisson system for massless electrons \eqref{vpme}. In quasineutral scaling, it is given by the functional
\be \label{def:Ee}
\mc{E}_\e[f_\e] := \frac{1}{2}\int_{\bt^d \times \br^d} |v|^2 f_\e \di x \di v + \frac{\e^2}{2} \int_{\bt^d \times \br^d} |\nabla U_\e|^2 \di x +  \int U_\e e^{U_\e} \di x .
\ee
Here, as before, $U_\e$ denotes the electrostatic potential induced by the ion distribution $f_\e$ in the case of massless electrons. That is, $U_\e$ satisfies
\be
\e^2 \Delta U_\e = e^{U_\e} - \rho[f_\e].
\ee
Throughout the paper, we shall use both $\rho[f]$ and $\rho_f$ to denote the density associated to $f$, namely
$$
\rho[f](t,x)=\rho_f(t,x):=\int_{\br^d}f(t,x,v)\di v.
$$
Control of the energy implies a bound on the $L^{\frac{d+2}{d}}(\bt^d)$ norm of the mass density.
\begin{lem} \label{lem:rho-Lp} \begin{enumerate}[(i)]
\item Let $f(x,v)$ satisfy, for some constant $C_0$,
\be
\mc{E}_\e[f] \leq C_0 .
\ee
Then there exists a constant $C$, depending on $C_0$ but independent of $\e$, such that
\be
\int_{\bt^d \times \br^d} |v|^2 f(x,v) \di x \di v \leq C .
\ee
\item Assume that
\be
\lVert f \rVert_{L^{\infty}(\bt^d \times \br^d)} \leq C_0,\qquad
\int_{\bt^d \times \br^d} |v|^2 f(x,v) \di x \di v \leq C_0 .
\ee
Then the mass density
\be \label{def:rho}
\rho(x) : = \int_{\br^d} f(x,v) \di v
\ee
lies in $L^{(d+2)/d}(\bt^d)$ with
\be \label{rho-Lp}
\lVert \rho \rVert_{L^{\frac{d+2}{d}}(\bt^d)} \leq C ,
\ee
for some constant $C$ depending on $C_0$ only.
\item In particular, if
\be
\lVert f \rVert_{L^{\infty}(\bt^d \times \br^d)} \leq C_0,\qquad
\mc{E}_\e[f] \leq C_0 ,
\ee
then there exists a constant $C$ independent of $\e$ such that
\be \label{rho-Lp-summary}
\lVert \rho \rVert_{L^{\frac{d+2}{d}}(\bt^d)} \leq C .
\ee
\end{enumerate}
\end{lem}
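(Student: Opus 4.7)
My plan is to prove the three parts in order, since (iii) follows immediately by chaining (i) and (ii).

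For part (i), the key observation is that among the three terms in $\mc{E}_\e[f]$, both the kinetic energy $\frac{1}{2}\int |v|^2 f\,\di x\,\di v$ and the Dirichlet term $\frac{\e^2}{2}\int |\nabla U_\e|^2\,\di x$ are manifestly non-negative, but the entropy-like term $\int U_\e e^{U_\e}\,\di x$ is not sign-definite. However, the scalar function $g(s):=se^s$ attains its global minimum at $s=-1$ with value $g(-1)=-1/e$. Therefore pointwise $U_\e e^{U_\e}\geq -1/e$, which integrated over $\bt^d$ yields
\be
\int_{\bt^d} U_\e e^{U_\e}\,\di x \;\geq\; -\frac{|\bt^d|}{e}.
\ee
Rearranging $\mc{E}_\e[f]\leq C_0$ and discarding the non-negative Dirichlet term then gives $\tfrac{1}{2}\int |v|^2 f\,\di x\,\di v\leq C_0+|\bt^d|/e$, which is (i).

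For part (ii), I would use the classical interpolation estimate between the $L^\infty$ bound and the velocity moment, following the standard ``splitting and optimisation'' technique. For any $R>0$, split
\be
\rho(x)=\int_{|v|\leq R} f\,\di v+\int_{|v|>R} f\,\di v\;\leq\; \lVert f\rVert_{L^\infty}\,\omega_d R^d+\frac{1}{R^2}\int_{\br^d}|v|^2 f\,\di v,
\ee
where $\omega_d$ is the volume of the unit ball. Optimising over $R$ (the right-hand side is minimised for $R^{d+2}\sim m(x)/\lVert f\rVert_{L^\infty}$, where $m(x):=\int|v|^2 f\,\di v$) produces the pointwise bound $\rho(x)\leq C\,\lVert f\rVert_{L^\infty}^{2/(d+2)}\, m(x)^{d/(d+2)}$. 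Raising to the power $(d+2)/d$ and integrating in $x$ gives
\be
\int_{\bt^d}\rho^{(d+2)/d}\,\di x \;\leq\; C\,\lVert f\rVert_{L^\infty}^{2/d}\int_{\bt^d\times \br^d}|v|^2 f\,\di x\,\di v,
\ee
which under the hypotheses of (ii) yields \eqref{rho-Lp}.

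Finally, (iii) is just the composition: assuming both $\lVert f\rVert_{L^\infty}\leq C_0$ and $\mc{E}_\e[f]\leq C_0$, apply (i) to obtain a uniform-in-$\e$ bound on the kinetic energy, then feed this bound into (ii). The constant in (iii) is independent of $\e$ precisely because the bound from (i) is independent of $\e$. The only mildly delicate point in the whole argument is the sign handling in (i); everything else is standard interpolation, so I do not anticipate a serious obstacle.
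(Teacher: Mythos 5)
Your proposal is correct and follows essentially the same route as the paper: part (i) by bounding the non-sign-definite term via the pointwise inequality $s e^{s} \geq -e^{-1}$ (the paper uses the slightly cruder $s e^{s}\geq -1$) and discarding the non-negative Dirichlet term, and part (ii) by the standard split-at-$|v|=R$ interpolation, which the paper simply cites rather than writes out. Part (iii) by chaining (i) and (ii) is exactly as in the paper.
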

\begin{proof}
Observe that $x e^x \geq - 1$. This implies that
\be
\int_{\bt^d \times \br^d} |v|^2 f \di x \di v \leq 2(C_0 + 1) .
\ee
Part (ii) then follows from a standard interpolation argument. See for example \cite[Lemma 5.1]{IGP-WP}.
\end{proof}

\end{paragraph}

\begin{paragraph}{Wasserstein distances:}
Many of our results involve controlling the distance between solutions to some PDEs, for instance the VPME or KIE systems. For the equations considered in this article, it is often useful to look at solutions in the class of measures. A very natural way to quantify the distance between two such solutions is given by the Wasserstein distances, a particular family of metrics on measures. For our purposes it suffices to consider probability measures. To define the Wasserstein distances, we first need to introduce the notion of a \textit{coupling} of two measures. If $(\Omega, \mc{F})$ is a measurable space, and $\mu$ and $\nu$ are probability measures on this space, then a coupling of $\mu$ and $\nu$ is a measure $\pi$ on the product space $\Omega \times \Omega$ from which $\mu$ and $\nu$ may be recovered as the marginals of $\pi$. More precisely, this means that for any $A \in \mc{F}$, we have the two identities
\be
\pi(A \times \Omega)  = \mu(A), \qquad \pi(\Omega \times A)  = \nu(A) .
\ee
We will use the notation $\mathcal P(\Omega)$ to denote the space of probability measures on $\Omega$, and $\Pi(\mu, \nu)$ to denote the set of possible couplings of $\mu$ and $\nu$. A Wasserstein distance between $\mu$ and $\nu$ is constructed by optimising a particular cost functional over the set $\Pi(\mu,\nu)$.

\begin{defi}[Wasserstein distances] \label{def:Wass}
Let $(\Omega,d)$ be a Polish space with metric $d$ and let $\mc{F}$ denote its Borel $\sigma$-algebra. Let $p \in [1, \infty)$. The Wasserstein distance of order $p$, denoted $W_p$, is defined by
\be
W_p^p(\mu, \nu) = \inf_{\pi \in \Pi(\mu,\nu)} \int_{\Omega \times \Omega} d(x,y)^p \di \pi(x,y),
\ee
for all $\mu, \nu \in \mc{P}(\Omega)$ such that the right hand side is finite. In particular this is well-defined for $\mu, \nu \in \mc{P}_p$, where $\mc{P}_p$ denotes the set of probability measures $\gamma$ for which 
\be
\int_{\Omega} d(x, x_0)^p \di \gamma(x) < \infty,
\ee
for some $x_0 \in \Omega$.
\end{defi}
\noindent In this work we will be using the cases $\Omega = \bt^d \times \br^{d}$ and $p = 1,2$.

We note two important properties of the Wasserstein distances. We refer to \cite{Vil03} for proofs and further background. The first is monotonicity with respect to the order $p$.
\begin{lem} Let $\mu, \nu$ be probability measures on $(\Omega, \mc{F})$, each having a $q$th moment. Let $p \leq q$. Then
\be
W_p(\mu, \nu) \leq W_q(\mu,\nu).
\ee
\end{lem}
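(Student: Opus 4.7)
The plan is to obtain the inequality by Jensen (equivalently Hölder) applied to the optimal coupling for the larger order $q$. Since $q \geq p \geq 1$, the exponent $q/p \geq 1$ and the map $t \mapsto t^{q/p}$ on $[0,\infty)$ is convex, which is exactly what is needed to compare the two cost integrals against any probability measure.

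Concretely, I would first observe that because both $\mu$ and $\nu$ admit a finite $q$th moment (and hence a finite $p$th moment for every $p \le q$, since one may integrate $d(\cdot,x_0)^p \le 1 + d(\cdot,x_0)^q$), the set $\Pi(\mu,\nu)$ is non-empty, the infima defining $W_p$ and $W_q$ are finite, and a minimiser $\pi^\star \in \Pi(\mu,\nu)$ for $W_q$ exists by the standard tightness/weak-compactness argument for Wasserstein distances recalled in \cite{Vil03}. Then I would estimate
\[
W_p^p(\mu,\nu) \;\le\; \int_{\Omega \times \Omega} d(x,y)^p \di \pi^\star(x,y),
\]
using only that $\pi^\star$ is admissible (not optimal) for $W_p$.

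The key step is then Jensen's inequality: since $\pi^\star$ is a probability measure on $\Omega \times \Omega$ and $t \mapsto t^{q/p}$ is convex on $[0,\infty)$, one has
\[
\left( \int_{\Omega \times \Omega} d(x,y)^p \di \pi^\star(x,y) \right)^{q/p} \;\le\; \int_{\Omega \times \Omega} d(x,y)^q \di \pi^\star(x,y) \;=\; W_q^q(\mu,\nu).
\]
Raising to the power $1/q$ and combining with the previous inequality yields $W_p(\mu,\nu) \le W_q(\mu,\nu)$.

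There is essentially no real obstacle here; the only point that requires a word of care is ensuring that the right-hand side integrals are finite, which is guaranteed by the assumption that both measures have a finite $q$th moment together with the elementary inequality $d(x,y)^p \le (d(x,x_0) + d(y,x_0))^p \le 2^{p-1}(d(x,x_0)^p + d(y,x_0)^p)$. Once this is in place, the argument is just Jensen applied once to the coupling.
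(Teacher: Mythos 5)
Your argument is correct, and it is essentially the standard proof: the paper itself does not prove this lemma but refers to \cite{Vil03}, where the same Jensen/H\"older argument on a coupling is used. One small remark: you do not actually need existence of an optimal coupling $\pi^\star$ for $W_q$ (though it does exist on a Polish space); applying Jensen to an $\epsilon$-near-optimal coupling and letting $\epsilon \to 0$ gives the same conclusion, and for the finiteness of the $q$-cost you should invoke the elementary bound $d(x,y)^q \leq 2^{q-1}\bigl(d(x,x_0)^q + d(y,x_0)^q\bigr)$ with exponent $q$ rather than $p$.
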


The second property is Kantorovich duality, which we will recall here only for the case $W_1$.
\begin{lem}[Kantorovich duality]
Let $\mu, \nu$ be probability measures $(\Omega, \mc{F})$, each having finite first moment. Then
\be
W_1(\mu, \nu) = \sup_{\phi\,,\, \|\phi\|_{\text{Lip}} \leq 1} \left \{ \int_\Omega \phi  \di \mu - \int_\Omega \phi \di \nu \right \} ,
\ee
where
$$
\|\phi\|_{\text{Lip}}:=\sup_{x\neq y}\frac{|\phi(x)-\phi(y)|}{|x-y|}.
$$
\end{lem}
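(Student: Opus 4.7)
The result is the Kantorovich--Rubinstein theorem, so the plan is to establish the easy inequality directly, and then invoke (and briefly outline the argument behind) the general Kantorovich duality.

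For the easy direction $W_1(\mu,\nu) \geq \sup_\phi \{\int \phi \di\mu - \int \phi \di\nu\}$, I would take any coupling $\pi \in \Pi(\mu,\nu)$ and any $1$-Lipschitz $\phi$. Using the marginal property of $\pi$, I would rewrite
\be
\int_\Omega \phi \di\mu - \int_\Omega \phi \di\nu = \int_{\Omega\times\Omega} \bigl[\phi(x)-\phi(y)\bigr] \di\pi(x,y) \leq \int_{\Omega\times\Omega} d(x,y) \di\pi(x,y),
\ee
where the last step uses $\|\phi\|_{\text{Lip}} \leq 1$. Taking the infimum over $\pi$ on the right and the supremum over $\phi$ on the left yields the desired inequality. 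The finiteness of the first moments of $\mu$ and $\nu$ guarantees that all integrals above are well-defined.

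For the reverse inequality, the plan is to first establish the more symmetric form of Kantorovich duality
\be
W_1(\mu,\nu) = \sup \left\{ \int_\Omega \phi \di\mu + \int_\Omega \psi \di\nu : \phi, \psi \in C_b(\Omega), \ \phi(x)+\psi(y) \leq d(x,y) \right\} ,
\ee
and then reduce the admissible test pairs to the single-function form $\psi = -\phi$ with $\phi$ $1$-Lipschitz. The first step is a convex-duality argument: one views the Monge--Kantorovich problem as an infinite-dimensional linear program and applies a Hahn--Banach / Fenchel--Rockafellar theorem, using the lower semicontinuity of $\pi \mapsto \int d \di\pi$ and the tightness of $\Pi(\mu,\nu)$ (inherited from the tightness of $\mu$ and $\nu$ on the Polish space $\Omega$) to obtain no duality gap and the existence of an optimal coupling.

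For the reduction step, given an admissible pair $(\phi,\psi)$ one performs the standard $c$-transform with $c(x,y)=d(x,y)$: replace $\phi$ by its $c$-transform $\phi^c(y):=\inf_x [d(x,y)-\phi(x)]$ and then apply the $c$-transform again. A direct computation using the triangle inequality for $d$ shows that $\phi^{cc}$ is $1$-Lipschitz and that $\phi^c(y) = -\phi^{cc}(y)$. Since passing to $c$-transforms only increases the value of $\int \phi \di\mu + \int \psi \di\nu$, the supremum is attained on pairs of the form $(\varphi,-\varphi)$ with $\varphi$ $1$-Lipschitz, giving the stated formula. The main obstacle is the no-duality-gap step: it requires a careful functional-analytic argument on the Polish space $\Omega$, and this is precisely the content for which the authors quote \cite{Vil03}; everything downstream (the $c$-transform reduction and the easy inequality) is elementary once that is in hand.
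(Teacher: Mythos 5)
Your proposal is correct and is exactly the standard Kantorovich--Rubinstein argument: the paper offers no proof of this lemma but simply refers to \cite{Vil03}, which is precisely the proof you reconstruct (easy inequality via couplings and $1$-Lipschitz test functions, the two-function duality via a Fenchel--Rockafellar/Hahn--Banach argument, then reduction to pairs $(\varphi,-\varphi)$ by $c$-transforms for the cost $c=d$). The only detail worth flagging is that on an unbounded $\Omega$ the $c$-transforms are $1$-Lipschitz but generally unbounded, so the finite first moment hypothesis is what makes them integrable and lets the reduction step go through, as handled in \cite{Vil03}.
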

\end{paragraph}

\begin{paragraph}{Analytic norms:}
For our results on the quasineutral limit, we will work with solutions that are close to being analytic, up to a small perturbation that may have much lower regularity. To measure analyticity we will use the norms $\lVert \cdot \rVert_{B_\delta}$, defined for $\delta > 1$ by
\be
\lVert g \rVert_{B_\delta} : = \sum_{k \in \bb{Z}^d} |\hat g(k)| \delta^{|k|} ,
\ee
where $\hat g(k)$ denotes the Fourier coefficient of $g$ of index $k$. 
This type of analytic norm was notably used in the context of quasineutral limits in the work of Grenier \cite{Grenier96}.
\end{paragraph}

\begin{paragraph}{Existence of solutions}
We will work in the class of solutions for the VPME system with bounded density. Solutions of \eqref{vpme} with bounded density have a uniqueness property, and exist for compactly supported initial data. Moreover, the energy \eqref{def:Ee} is conserved for these solutions. See \cite[Theorems 2.3-2.4]{IGP-WP}, which we recall below:

\begin{thm}[Uniqueness]
Let $d = 2, 3$. Consider an initial datum $f_0 \in \mc{P}(\bt^d \times \br^d)$. Then there exists at most one solution $f_t \in C([0,T] ; \mc{P}(\bt^d \times \br^d))$ of \eqref{vpme} with bounded density, i.e. such that the mass density $\rho_f$ lies in the space $L^\infty_{\text{loc}}([0, \infty) ; L^\infty(\bt^d))$.
\end{thm}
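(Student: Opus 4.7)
The plan is to adapt Loeper's second-order stability method, which is the standard tool for uniqueness of Vlasov-Poisson-type systems in the class of $L^\infty$-bounded densities, to account for the nonlinear $e^U$ correction in VPME. Consider two solutions $f^1, f^2 \in C([0,T]; \mc{P}(\bt^d \times \br^d))$ of \eqref{vpme} sharing the same initial datum $f_0$, with $\lVert \rho_{f^i} \rVert_{L^\infty_t L^\infty_x} \leq C$. The $L^\infty$-bound on $\rho_{f^i}$ propagates through elliptic regularity to an $L^\infty$-bound on $U^i$ and, in turn, to a log-Lipschitz bound on $E^i$; this is enough to build a unique characteristic flow $(X^i_t, V^i_t)$ for each solution. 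The quantity to control is
\[
D(t) := \int_{\bt^d \times \br^d} \lf[ |X^1_t - X^2_t|^2 + |V^1_t - V^2_t|^2 \ri] \di f_0(x,v),
\]
which dominates $W_2^2(f^1_t, f^2_t)$ since the pair $((X^1_t, V^1_t), (X^2_t, V^2_t))_\# f_0$ is a coupling of $f^1_t$ and $f^2_t$. Showing $D \equiv 0$ on $[0,T]$ will yield $f^1 \equiv f^2$.

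The central new difficulty compared with classical VP is the semilinear Poisson equation, which I would handle through the splitting $U^i = \bar U^i + \hat U^i$ defined by $\Delta \bar U^i = 1 - \rho_{f^i}$ (the classical zero-mean VP potential) and $\Delta \hat U^i = e^{U^i} - 1$ (well-defined since $\int e^{U^i}\,\di x = 1$). Setting $E^i = E^i_L + E^i_N$ correspondingly, Loeper's classical estimate yields
\[
\lVert E^1_L - E^2_L \rVert_{L^2(\bt^d)} \leq C\,\max_i \lVert \rho_{f^i} \rVert_{L^\infty}^{1/2}\, W_2(\rho_{f^1}, \rho_{f^2}).
\]
For the nonlinear part, writing $e^{U^1} - e^{U^2} = (U^1 - U^2) M_{12}$ with $M_{12} := \int_0^1 e^{sU^1 + (1-s)U^2}\,ds$ bounded above and strictly positive under the $L^\infty$ control of $U^i$, the correction $\hat U^1 - \hat U^2$ solves the coercive linear elliptic equation
\[
(\Delta - M_{12})(\hat U^1 - \hat U^2) = M_{12}\,(\bar U^1 - \bar U^2),
\]
so standard energy methods produce $\lVert \hat U^1 - \hat U^2 \rVert_{H^1} \leq C \lVert \bar U^1 - \bar U^2 \rVert_{L^2}$, meaning that $E^i_N$ contributes a strictly better error than $E^i_L$ and does not disrupt the classical argument.

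Differentiating $D$ and decomposing $E^1(X^1) - E^2(X^2) = [E^1(X^1) - E^1(X^2)] + [(E^1 - E^2)(X^2)]$, the first piece is controlled by the log-Lipschitz continuity of $E^1$ (yielding, after a concavity argument, a term of order $\sqrt{D}\log(1/\sqrt{D})$), while the second piece is estimated by pushing forward under $X^2_t$: $\lVert (E^1 - E^2)\circ X^2 \rVert_{L^2(f_0)}^2 \leq \lVert \rho_{f^2} \rVert_{L^\infty}\lVert E^1 - E^2 \rVert_{L^2(\bt^d)}^2 \leq C\, D$, using the field stability above together with $W_2(\rho_{f^1}, \rho_{f^2}) \leq W_2(f^1_t, f^2_t) \leq \sqrt{D}$. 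Combining these yields an Osgood-type inequality
\[
\dot D(t) \leq C\, D(t)\, \log\!\lf( e + \frac{1}{D(t)} \ri),
\]
and $D(0) = 0$ then forces $D \equiv 0$ on $[0,T]$. I expect the main obstacle to be the quantitative semilinear elliptic analysis: one must derive the $L^\infty$ bound on $U^i$ and the $H^1$-control on $\hat U^1 - \hat U^2$ using constants depending only on $\lVert \rho_{f^i} \rVert_{L^\infty}$, avoiding any circular dependence on higher regularity of $U^i$. This step is what genuinely distinguishes the VPME argument from the classical Loeper proof.
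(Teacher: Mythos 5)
Your proposal is correct and follows essentially the same route as the paper's proof (given in \cite{IGP-WP} and reproduced here in $\e$-dependent form as Proposition~\ref{prop:Wstab}): the splitting $U=\bar U+\widehat U$, Loeper's $W_2$ stability for the singular part, a coercive energy estimate for the difference of the nonlinear corrections (your $M_{12}$ linearisation is just a rephrasing of the monotonicity argument in Lemma~\ref{lem:hatU-stab}), the log-Lipschitz bound on the field to control the coupled characteristic flows, and an Osgood-type Gr\"onwall inequality for the quadratic functional $D$. The only point left implicit, handled in the paper via Ambrosio's uniqueness theorem for the linear transport equation, is that a bounded-density measure solution must coincide with the pushforward of $f_0$ along the flow of its own (log-Lipschitz) field, which justifies your coupling.
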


\begin{thm}[Existence of bounded density solutions] \label{thm:exist}
Let $d = 2, 3$. Consider an initial datum $f_0 \in \mc{P} \cap L^1 \cap L^\infty(\bt^d \times \br^d)$ with finite energy, namely $\mc{E} [f_0] \leq C_0$ for some constant $C_0>0$
and compact support in $\bt^d \times \br^d$. Then there exists a global in time solution $f_t \in C([0,\infty); \mc{P}(\bt^d \times \br^d))$ of \eqref{vpme} with initial data $f_0$, which satisfies
\be \label{solution-lp}
\lVert f_t \rVert_{L^p{(\bt^d \times \br^d})} \leq \lVert f_0 \rVert_{L^p{(\bt^d \times \br^d})}
\ee
for all $p \in [1,\infty]$, and has conserved energy and locally bounded mass:
\be \label{solution-energy}
\mc{E} [f_t] = \mc{E} [f_0], \qquad \rho_{f}  \in L^{\infty}_{\text{loc}} ([0, \infty) ; L^{\infty}(\bt^d)) .
\ee
\end{thm}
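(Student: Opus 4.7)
The natural route is to construct solutions via regularisation, establish uniform-in-parameter a priori bounds, and pass to the limit. Fix a mollifier $\zeta_\eta$ on $\bt^d \times \br^d$ and set $f_0^\eta := f_0 \ast \zeta_\eta$; at the same time regularise the singular Coulomb kernel $K = \nabla G$ to a smooth $K^\eta$. For such a regularised system the characteristics form a well-defined Lipschitz flow on $\bt^d \times \br^d$, and existence and uniqueness of a smooth, compactly supported $f^\eta$ follows by a Picard iteration on the trajectories (or equivalently by iterating between the linear Vlasov equation with fixed field and the regularised Poisson equation with fixed source). The real content of the proof is to derive estimates that are independent of $\eta$ and then to identify the limit as a solution of \eqref{vpme}.

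A key structural tool specific to VPME is the splitting $U = \bar U + \wt U$, where $\bar U$ solves the linear Poisson problem $\Delta \bar U = 1 - \rho$ with zero mean and $\wt U$ solves the nonlinear equation $\Delta \wt U = e^{\bar U + \wt U} - 1$. The latter admits a variational formulation as the minimiser of a strictly convex functional on $H^1(\bt^d)$, and a maximum-principle and elliptic regularity argument shows that $\wt U$ is strictly smoother than $\bar U$: informally, the nonlinearity self-regularises, so $\nabla \wt U$ gains one derivative over $\nabla \bar U$. This is exactly what allows the VPME analysis to piggyback onto the classical Vlasov--Poisson one.

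The $\eta$-uniform a priori estimates are then as follows. Transport along the incompressible phase-space flow yields $\lVert f^\eta_t \rVert_{L^p} = \lVert f_0^\eta \rVert_{L^p}$ for every $p \in [1,\infty]$. Multiplying the Vlasov equation by $|v|^2/2$ and using the continuity equation $\pt_t \rho + \na \cdot j = 0$ together with the regularised Poisson equation produces conservation of $\mc{E}_\e[f^\eta]$; combined with Lemma \ref{lem:rho-Lp} this bounds the kinetic energy and $\lVert \rho^\eta \rVert_{L^{(d+2)/d}}$ uniformly. The heart of the matter is the local-in-time control of $\lVert \rho^\eta \rVert_{L^\infty}$. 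Following the Batt--Horst/Pfaffelmoser strategy adapted to VPME, one tracks the velocity support radius $R(t) := \sup\{|v| : (x,v) \in \mathrm{supp}\, f^\eta_t\}$, uses $|\dot V| \leq \lVert E^\eta \rVert_\infty$ along characteristics, and closes the loop via
\be
\lVert \rho^\eta(t) \rVert_{L^\infty} \leq C \lVert f_0 \rVert_{L^\infty} R(t)^d,
\ee
together with $L^\infty$ bounds on $E^\eta = -\nabla \bar U^\eta - \nabla \wt U^\eta$ obtained by interpolating between the $L^{(d+2)/d}$ and $L^\infty$ bounds on $\rho^\eta$ and by exploiting the improved regularity of $\wt U^\eta$. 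This yields a differential inequality for $R(t)$ giving an $\eta$-independent bound on any finite time interval.

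With these uniform estimates in hand, tightness and weak-$\ast$ compactness produce a limit $f_t$ along a subsequence, and the uniform $L^\infty$ bound on $\rho^\eta$ is exactly what allows passage to the limit in the nonlinear term $e^{U^\eta}$ via continuity of the nonlinear Poisson solver with respect to $L^p$ convergence of the source. Lower semicontinuity transfers the $L^p$ bounds \eqref{solution-lp}, while the energy identity \eqref{solution-energy} is recovered as an equality by combining lower semicontinuity with the strong (Wasserstein) convergence furnished by the stability estimate of Section \ref{sec:stability}. The main obstacle throughout is closing the bootstrap for $\lVert \rho^\eta \rVert_\infty$: compared with the classical case, the electric field carries the extra nonlinear contribution $-K \ast e^{U^\eta}$, and it is precisely the improved regularity of $\wt U^\eta$ coming from the Poisson splitting that makes the classical support-growth argument survive in the VPME setting.
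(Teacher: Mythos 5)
Your overall architecture (regularise, prove $\eta$-uniform bounds via the splitting $U=\bar U+\widehat U$, control the density through the growth of the velocity support, pass to the limit) is the same strategy as the paper's: this theorem is quoted from \cite{IGP-WP}, whose construction goes through the regularised system \eqref{vpme-reg} and whose key quantitative ingredients are reproduced (in scaled form) in Sections \ref{sec:electric} and \ref{sec:growth}. However, there is a genuine gap at the step you yourself call ``the heart of the matter'': the closure of the $L^\infty$ bound on $\rho$ in dimension $3$. Your mechanism is a pointwise-in-time bound $\lVert E\rVert_{L^\infty}$ obtained by interpolating $\rho$ between $L^{(d+2)/d}$ and $L^\infty$, fed into $\dot R\lesssim\lVert E\rVert_{L^\infty}$ and $\lVert\rho\rVert_{L^\infty}\lesssim R^d$. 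In $d=2$ this works because the energy gives $\rho\in L^2$ and the optimal cut yields $\lVert\bar E\rVert_{L^\infty}\lesssim 1+|\log\lVert\rho\rVert_{L^\infty}|^{1/2}$, so the ODE for $R$ is essentially linear. In $d=3$ the energy only gives $\rho\in L^{5/3}$, and the same interpolation yields $\lVert\bar E\rVert_{L^\infty}\lesssim\lVert\rho\rVert_{L^\infty}^{4/9}\lesssim R^{4/3}$, i.e.\ a superlinear (Riccati-type) differential inequality for $R$ that blows up in finite time. This delivers at best a local-in-time bounded-density solution, not the global statement with $\rho_f\in L^\infty_{\mathrm{loc}}([0,\infty);L^\infty)$ claimed in the theorem. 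The paper's (and \cite{IGP-WP}'s) actual argument in $d=3$ is the Batt--Rein bootstrap of Section \ref{sec:growth3d}: a time-averaged estimate of $\int|X(s)-y|^{-2}\rho(s,y)\,dy$ along characteristics (Lemma \ref{lem:BR}), iterated three times to lower the exponent from $4/9$ to $1/6$, which makes the final inequality $h_\rho\lesssim R_0^3+h_\rho^{1/2}t^3$ close and gives polynomial-in-time growth. Merely invoking ``Batt--Horst/Pfaffelmoser'' does not substitute for this step, since the concrete closure you describe is the static interpolation one, which fails globally in $3$D.

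Two smaller points. First, your regularisation (mollify $f_0$ and replace $K$ by a smooth $K^\eta$) does not obviously preserve the energy structure: the paper deliberately uses the symmetric double regularisation ($E_r=-\chi_r\ast\nabla U_r$ with source $\chi_r\ast\rho$), precisely so that the regularised system is Hamiltonian and the regularised energy \eqref{def:Ee-reg} is exactly conserved; with a one-sided kernel mollification the conservation you assert needs justification (or must be replaced by an approximate conservation estimate). Second, recovering the energy \emph{equality} in the limit requires more than lower semicontinuity plus generic weak convergence; in the paper's framework this is handled through the quantitative $W_2$ convergence of $f_r$ to $f$ (cf.\ Lemma \ref{lem:RUR}) together with the uniform density bounds, and your appeal to the strong--strong stability estimate of Section \ref{sec:stability} is circular at this stage, since that estimate presupposes the bounded-density solutions whose existence you are constructing.
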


\end{paragraph}

\begin{paragraph}{Notation:}
Later on, we will use the notation $\mathbf{\overline{\exp}_n} $ to denote the $n$-fold iteration of the exponential function. For example
\be
\,\mathbf{\overline{\exp}_3}{(x)} : = \exp \exp  \exp{(x)} .
\ee

In our proofs, $C$ will denote a positive constant independent of the relevant parameters (i.e., $N,\e,r$, depending on the context) and that may change from line to line. Subscripts may be used to indicate a constant depending on a given parameter - for instance, $C_N$ denotes a constant depending on $N$ but independent of other parameters.
\end{paragraph}

\subsection{Quasineutral limit}\label{sec:statement-quasi}

Our first result is a quasineutral limit for initial data that are small but rough perturbations of uniformly analytic functions.
This extends the one dimensional result in \cite{IHK1} to higher dimensions, and generalises the results in \cite{IHK2} from the classical VP system to the massless electron case.
With respect to the arguments in \cite{IHK2,IHK1}, here we need to face several new challenges. Indeed, at the same time we need to deal with the nonlinear coupling in the Poisson equation (in \cite{IHK1} a solution to this issue was found only for $d=1$)
and with the fact that weaker regularity estimates are available for the Poisson equation (in \cite{IHK2} this was solved in the case of the classical linear Poisson equation). As already discussed in \cite{IHK2,IHK1}, we note that the exponential smallness (in $\e$) of the perturbation is necessary.

Let us observe that we work with compactly supported data in order to get control of the mass density of the solution. However we can allow the size of the support to grow at a controlled rate, exponential in $\e^{-1}$. These assumptions are chosen to match the results obtained in \cite{IHK2}. Although we believe that the hypothesis on the support may be slightly weakened to include densities that decay exponentially fast in velocity, achieving such extension here would go completely beyond the scope of this paper. 
The interested reader is referred to the papers \cite{IHK2, IHK1} for a discussion about possible initial data that satisfy our assumptions.

\begin{thm}[Quasineutral limit] \label{thm:quasi}
Let $d = 2, 3$. Consider initial data $f_\e(0)$ satisfying the following conditions:
\begin{itemize}
\item (Uniform bounds) $f_\e(0)$ is bounded and has bounded energy, uniformly with respect to $\e$:
\be \label{unif-energy}
\lVert f_\e(0) \rVert_{L^{\infty}(\bt^d \times \br^d)}  \leq C_0, \qquad\mc{E}_\e[f_\e(0)]  \leq C_0,
\ee
for some constant $C_0>0$.
\item (Control of support) There exists $C_1>0$ such that 
\be \label{quasi:data-spt}
f_\e(0, x, v) = 0 \qquad \text{for } |v| > \exp(C_1 \e^{-2}) .
\ee
\item (Perturbation of an analytic function) There exist $g_\e(0)$ satisfying, for some $\delta > 1$, $\eta>0$, and $C>0$,\begin{align}
\sup_\e \sup_{v \in \br^d} (1 + |v|^2) \lVert g_\e(0, \cdot, v) \rVert_{B_\delta} \leq C , \\ \label{unif-dens}
\sup_\e \left \| \int_{\br^d} g_\e(0, \cdot, v) \di v - 1  \right \|_{B_\delta} \leq \eta ,
\end{align}
as well as the support condition \eqref{quasi:data-spt}, such that
\be
W_2(f_\e(0), g_\e(0)) \leq \varphi(\e)
\ee
for some function $\varphi$ converging to 0 sufficiently quickly as $\e \to 0$. For example,
\be
\varphi(\e) = \left [ \,\mathbf{\overline{\exp}_4}  (C \e^{-2}) \right ]^{-1}
\ee
is an admissible choice for $C$ sufficiently large with respect to $C_0, C_1$.
\item(Convergence of data) $g_\e(0)$ has a limit $g(0)$ in the sense of distributions as $\e \to 0$.
\end{itemize}
Let $f_\e$ denote the unique solution of \eqref{vpme-quasi} with bounded density and initial datum $f_\e(0)$. Then there exist $T_*$, depending on the initial data, and a solution $g$ of \eqref{KIE} on the time interval $[0, T_*]$ with initial datum $g(0)$, such that
\be
\lim_{\e \to 0}\, \sup_{t \in [0, T_*]} W_1(f_\e(t), g(t)) = 0 .
\ee
\end{thm}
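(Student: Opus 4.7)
The plan is to introduce an intermediate analytic solution $g_\e$ of \eqref{vpme-quasi} starting from the analytic datum $g_\e(0)$, and to split
$$
W_1(f_\e(t), g(t)) \leq W_1(f_\e(t), g_\e(t)) + W_1(g_\e(t), g(t)).
$$
The first summand will be controlled by the strong-strong stability estimate of Section \ref{sec:stability}, combined with the $L^\infty$ density bounds of Section \ref{sec:growth}. The second summand will be handled by a Grenier-type construction of uniformly analytic solutions to \eqref{vpme-quasi}, which also provides $g$ as a limit solving \eqref{KIE}.

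For the construction of $g_\e$, I would adapt Grenier's method to the nonlinear Poisson problem by splitting $U_\e = \bar U_\e + \hat U_\e$, where $\e^2 \Delta \bar U_\e = 1 - \rho_\e$ is the classical quasineutral operator and $\e^2 \Delta \hat U_\e = e^{\bar U_\e + \hat U_\e} - 1$ is a semilinear correction that gains two derivatives of regularity. Writing
$$
g_\e(t,x,v) = \int_{\br^d} g_\e\bigl(0, y_\e^\theta(t,x), \theta\bigr)\, J_\e^\theta(t,x)\, \delta\bigl(v - u_\e^\theta(t,x)\bigr)\, d\theta,
$$
as a continuous superposition of monokinetic distributions indexed by an initial velocity $\theta$, each velocity field $u_\e^\theta$ solves a pressureless Euler-type equation coupled through the total density $\rho_\e = \int \rho_\e^\theta\, d\theta$ to the split Poisson problem. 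A Cauchy-Kowalewski-type argument in the analytic norms $B_\delta$ -- uniform in $\e$ thanks to the smallness assumption \eqref{unif-dens} and to the fact that $\hat U_\e$ is smoother than $\bar U_\e$ -- produces existence on a time interval $[0, T_*]$ independent of $\e$. The formal limit $\e \to 0$ sends $U_\e \to \log \rho$, and compactness from the uniform analytic bounds yields a solution $g$ of \eqref{KIE} with $\sup_{t \in [0, T_*]} W_1(g_\e(t), g(t)) \to 0$.

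For the stability term, I would invoke the Loeper-type estimate of Section \ref{sec:stability}, which for two bounded-density solutions of \eqref{vpme-quasi} yields a differential inequality of the schematic form
$$
\frac{d}{dt} W_2^2(f_\e(t), g_\e(t)) \leq C\bigl(1 + \|\rho_{f_\e}(t)\|_\infty + \|\rho_{g_\e}(t)\|_\infty\bigr)\, W_2^2(f_\e(t), g_\e(t)),
$$
up to possible logarithmic corrections. Analyticity controls $\rho_{g_\e}$ uniformly on $[0, T_*]$. For $\rho_{f_\e}$, the assumption \eqref{quasi:data-spt} only gives an initial velocity support of radius $\exp(C_1 \e^{-2})$, which then grows in time at a rate dictated by $E_\e$ and hence by $\rho_{f_\e}$ itself through the nonlinear Poisson equation. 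The bootstrap carried out in Section \ref{sec:growth} closes this loop and produces a bound of the form $\|\rho_{f_\e}(t)\|_\infty \leq \mathbf{\overline{\exp}_3}{(C\e^{-2})}$ on $[0, T_*]$. Gronwall then gives
$$
W_2^2(f_\e(t), g_\e(t)) \leq W_2^2(f_\e(0), g_\e(0))\, \mathbf{\overline{\exp}_4}{(C\e^{-2})},
$$
so the hypothesis $W_2(f_\e(0), g_\e(0)) \leq \vp(\e) = \bigl[\mathbf{\overline{\exp}_4}{(C\e^{-2})}\bigr]^{-1}$ is precisely tailored to force this quantity to vanish as $\e \to 0$. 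Combining with $W_1 \leq W_2$ and the triangle inequality concludes the proof.

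The main obstacle is the $L^\infty$ control of $\rho_{f_\e}$. The nonlinearity in the Poisson equation prevents a direct application of the classical Vlasov-Poisson arguments of \cite{IHK2}, and the split $U_\e = \bar U_\e + \hat U_\e$ is essential to decouple the Coulomb-type singularity of $\bar U_\e$ from the semilinear correction $\hat U_\e$ in both the stability and the growth-of-support estimates. A secondary subtlety is that the Sobolev estimates available for $\hat U_\e$ behave differently in $d = 2$ and $d = 3$, which forces the two distinct treatments of Sections \ref{sec:growth2d} and \ref{sec:growth3d}; the iterated exponentials in the conclusion then reflect that one exponential is already present in the initial support and another arises from Gronwall.
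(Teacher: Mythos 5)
Your overall architecture matches the paper's proof exactly: introduce the bounded-density VPME solution $g_\e$ launched from the analytic datum $g_\e(0)$, split $W_1(f_\e(t),g(t))$ by the triangle inequality, treat the analytic family by Grenier's multi-fluid construction adapted to the nonlinear Poisson coupling (this is what the paper invokes via \cite{Grenier96} and \cite[Proposition 4.1, Corollary 4.2]{IHK1}), and treat the perturbation term by the strong-strong stability estimate together with the growth estimates of Section~\ref{sec:growth}. The analytic part of your plan is therefore fine.

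The genuine gap is in the stability step. The estimate one can actually prove (Proposition~\ref{prop:Wstab}) is \emph{not} a linear Gr\"onwall inequality in $W_2^2$: because the singular part $\bar E_\e$ of the field is only log-Lipschitz (Lemma~\ref{lem:logLip}), the functional $D$ controlling $W_2^2$ satisfies $\dot D \leq C_\e D (1+|\log D|)$ for small $D$, and the Osgood-type integration produces a loss \emph{in the exponent}, roughly $W_2^2(t) \lesssim W_2^2(0)^{\,e^{-C_\e t}}$, not a multiplicative loss $W_2^2(0)\cdot \mathbf{\overline{\exp}_4}(C\e^{-2})$ as in your display. The ``logarithmic corrections'' you set aside are precisely what force the iterated-exponential smallness: convergence requires $\lvert \log W_2(f_\e(0),g_\e(0))\rvert \gg e^{C_\e T_*}$, and since $C_\e \leq \mathbf{\overline{\exp}_2}(C\e^{-2})(M+1)$ --- the double exponential coming from the $C^{2,\alpha}$ bounds on the smooth part $\widehat U_\e$ in Proposition~\ref{prop:regU}, not from the density --- with $M \leq C e^{C\e^{-2}}$ by Proposition~\ref{prop:growth} (a single exponential, not the $\mathbf{\overline{\exp}_3}(C\e^{-2})$ you assert), this yields the threshold $W_2(f_\e(0),g_\e(0)) \leq [\mathbf{\overline{\exp}_4}(C\e^{-2})]^{-1}$. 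Your threshold agrees with the paper's only because two inaccuracies compensate: you inflate the density bound to a triple exponential and then linearise the Gr\"onwall inequality. Correspondingly, your closing accounting of the four exponentials (``one from the initial support, one from Gr\"onwall'') is misattributed: even with $\e$-uniform support the same rate is needed, the exponentials arising from the density growth estimate, the double-exponential regularity of $\widehat U_\e$, the exponentiation in time in the Osgood step, and the final inversion of the logarithm. Replacing your Gr\"onwall display by the actual conclusion of Proposition~\ref{prop:Wstab} repairs the argument and reproduces the paper's proof.
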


\begin{remark}
The condition \eqref{quasi:data-spt} should be understood as giving the fastest growth rate on the support for which the inverse quadruple exponential is an admissible choice of $\varphi$. In particular, this would still be the rate achievable by our methods even if the support of the data was uniform in $\e$.
\end{remark}

\subsection{Mean field limit}\label{sec:statement-MF}

As mentioned in the introduction, the validity of a particle system approximation to kinetic equations is a fundamental problem.
In Section~\ref{sec:MFL} we will derive the VPME system as the mean field limit of a regularised microscopic particle system. In this paper we will use a regularisation of the kind proposed by Lazarovici \cite{Laz}, however we expect that it would be possible to adapt other similar regularisation methods to this setting.

More precisely, we fix a smooth, compactly supported, radially symmetric function $\chi$ with unit mass and define
\be \label{Def_chi}
\chi_r(x) = r^{-d} \chi \left ( \frac{x}{r} \right ) .
\ee
We then consider a microscopic system describing the dynamics of a system of `delocalised ions' of shape $\chi_r$. For $1 \leq i \leq N$, let $(X_i, V_i)$ denote the position and velocity of the centre of the $i$th delocalised ion. The system is described by the following system of ODEs:
\be \label{ODEreg}
\begin{cases} \dot X_i = V_i \\
\dot V_i = \frac{1}{N} \sum_{i \neq j} \chi_r \ast K \ast \chi_r (X_i - X_j) - \chi_r \ast K \ast e^U ,
\end{cases}
\ee
where $U$ satisfies
\be \label{ODEreg-U}
\Delta U(x) = e^{U (x)} - \frac{1}{N} \sum_{i=1}^N \chi_r(x - X_i) 
\ee
and $K$ denotes the Coulomb kernel on $\bt^d$. That is, $K = \nabla G$ where $G$ is defined by \eqref{def:G}.
Note that we can rewrite the velocity equation as
$$
\dot V_i = - \chi_r \ast \nabla U (t, X_i) .
$$
and the equation for $U$ as
\be
\Delta U = e^U - \chi_r \ast \rho_{\mu^N},
\ee
where $\mu^N$ denotes the empirical measure as defined in \eqref{def:mu}. This is valid because $\chi_r \ast K \ast \chi_r(0) = 0$, and so
\be
\frac{1}{N} \sum_{i \neq j} \chi_r \ast K \ast \chi_r (X_i - X_j) = \frac{1}{N} \sum_{i =1}^N \chi_r \ast K \ast \chi_r (X_i - X_j) = \chi_r \ast K \ast \chi_r \ast \rho_{\mu^N}(X_i) .
\ee
Indeed, since $K$ is odd and $\chi_r$ is even, we have
\begin{align}
\chi_r \ast K \ast \chi_r (0) &= \int_{\bt^d \times \bt^d} \chi_r(-x) K(x-y) \chi_r (y) \di x \di y \\
& = \int_{\bt^d \times \bt^d} \chi_r(-y) K(y-x) \chi_r (x) \di x \di y \\
&= - \int_{\bt^d \times \bt^d} \chi_r(y) K(x-y) \chi_r (-x) \di x \di y \\
&= - \chi_r \ast K \ast \chi_r (0) .
\end{align}

The system \eqref{ODEreg}-\eqref{ODEreg-U} is of the form \eqref{ODE-gen}, but where the `external' potential $\Psi = \chi_r \ast G \ast e^U$ in fact depends nonlinearly on the solution of the system, via the Poisson equation \eqref{ODEreg-U}.

The next result shows a general statement about the validity of the mean field limit for the regularised particle system. We shall discuss later, in the context of Theorem \ref{thm:typ-MF}, assumptions on the regularisation parameter $r$ for which one can find initial data to which this result applies.

\begin{thm}[Regularised mean field limit] \label{thm:MFL}
Let $d=2,3$, and let $f_0$ be a choice of initial datum for \eqref{vpme} such that there exist a solution $f$ of \eqref{vpme} and, for each $r$, a solution $f_r$ of \eqref{vpme-reg} such that on some time interval $[0, T_*]$, all these solutions have bounded density: for some $M>0$,
\be
\| \rho_f \|_{L^\infty([0, T_*] \times \bt^d)}, \, \, \sup_r \| \rho_{f_r} \|_{L^\infty([0, T_*] \times \bt^d)} \leq M .
\ee
Assume that $r = r_N$ and the initial configurations for \eqref{ODEreg} are chosen such that the corresponding empirical measures satisfy, for some constant $C_{M,T_*}$, sufficiently large depending on $M$ and $T_*$,
\be \label{config-rate}
\limsup_{r \to 0} \frac{W_2^2(f_0, \mu^N_r(0)) }{r^{d + 2 + C_{M,T_*}|\log{r}|^{-1/2}}} < 1.
\ee
Then the empirical measure associated to the particle system dynamics starting from this configuration converges to $f$:
\be
\lim_{r \to 0} \sup_{t \in [0,T_*]} W_2(f_t, \mu^N_r(t)) = 0 .
\ee

\end{thm}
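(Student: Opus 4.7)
The plan is to compare the empirical measure $\mu^N_r(t)$ with the VPME solution $f(t)$ via the intermediate object $f_r(t)$, the smooth solution of the regularised VPME system (call it (VPME)$_r$) starting from $f_0$. Here (VPME)$_r$ denotes the Vlasov equation in which the force field is replaced by $E_r := -\chi_r \ast \nabla U_r$, with $U_r$ satisfying the regularised Poisson equation $\Delta U_r = e^{U_r} - \chi_r \ast \rho$. A direct computation shows that the empirical measure $\mu^N_r(t)$ associated to the ODE system \eqref{ODEreg}--\eqref{ODEreg-U} is a distributional solution of (VPME)$_r$ with atomic initial datum; this is the standard observation that opens the Dobrushin-type strategy. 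By the triangle inequality,
\be
W_2(f_t, \mu^N_r(t)) \leq W_2(f_t, f_r(t)) + W_2(f_r(t), \mu^N_r(t)),
\ee
and we control each piece separately.

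\textbf{Step 1: Regularisation error $W_2(f_t, f_r(t))$.} Both $f$ and $f_r$ are measures with bounded density on $[0, T_*]$ by hypothesis. We apply the strong-strong Loeper-type stability estimate from Section \ref{sec:stability}, together with estimates from Section \ref{sec:electric} comparing the true VPME field $E[f]$ to the regularised field $E_r[f_r]$. The error between the two force fields has two contributions: the difference between $K$ and $\chi_r \ast K \ast \chi_r$, which is $O(r)$ in suitable norms away from the singularity, and the difference between $e^{U}$ and $e^{U_r}$ when both $\rho_f$ and $\chi_r \ast \rho_{f_r}$ are bounded. Propagating this error through the double-exponential-type stability of VPME yields an explicit bound $W_2(f_t, f_r(t)) \to 0$ as $r \to 0$, uniformly on $[0, T_*]$.

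\textbf{Step 2: Particle approximation $W_2(f_r(t), \mu^N_r(t))$.} Both measures solve (VPME)$_r$. Following the Dobrushin--Lazarovici scheme, we track an optimal transport plan between $f_r(t)$ and $\mu^N_r(t)$ along their characteristic flows. Differentiating $W_2^2$ produces a term controlled by the Lipschitz constant of $E_r$, which after regularisation is finite but diverges as $r \to 0$; crucially, the double convolution $\chi_r \ast K \ast \chi_r$ yields a Lipschitz bound on the linear part that is much milder than for a single convolution. The nonlinear piece $e^{U_r}$ is handled by differentiating the regularised Poisson equation and applying the electric-field estimates of Section \ref{sec:electric}, with the $L^\infty$ bound $\|\chi_r \ast \rho_{\mu^N_r}\|_\infty \leq \|\chi_r\|_\infty$ replacing the bounded-density hypothesis that fails for the atomic measure itself. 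This yields a Gr\"onwall-type inequality
\be
\frac{d}{dt} W_2^2(f_r(t), \mu^N_r(t)) \leq L(r) \, W_2^2(f_r(t), \mu^N_r(t)) + \text{lower order in } r,
\ee
with $L(r)$ growing at a controlled polylogarithmic rate. Integrating on $[0, T_*]$ and inserting the initial-data hypothesis \eqref{config-rate}, the exponent $d + 2 + C_{M,T_*}|\log r|^{-1/2}$ is chosen precisely so that $e^{L(r) T_*} \cdot r^{d+2+C_{M,T_*}|\log r|^{-1/2}} \to 0$ as $r \to 0$.

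\textbf{Main obstacle.} The delicate point is Step 2: establishing a Loeper-type stability for (VPME)$_r$ with explicit, mild dependence on $r$, while simultaneously handling the nonlinear coupling $e^{U_r}$. Compared to the classical VP case treated by Lazarovici in \cite{Laz}, the extra challenge is that the Poisson equation is nonlinear, so the comparison of $E_r[f_r]$ and $E_r[\mu^N_r]$ requires comparing two implicitly defined potentials, which must be controlled via the electric-field estimates of Section \ref{sec:electric} adapted to the regularised setting. Provided the Lipschitz-type constant of the regularised force can be bounded so that the admissible exponent in \eqref{config-rate} matches the one found in the classical case, the remainder of the argument is a standard Gr\"onwall--triangle-inequality synthesis yielding the stated convergence.
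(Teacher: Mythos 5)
Your overall skeleton coincides with the paper's: interpolate through the regularised solution $f_r$ with the triangle inequality, prove a weak--strong stability estimate for the regularised system between $f_r$ and the empirical measure (which is an exact weak solution), and separately quantify the regularisation error $W_2(f_t,f_r(t))$ (the paper does these two steps in Lemmas \ref{lem:wkstr} and \ref{lem:RUR}; note for Step 1 that Proposition \ref{prop:Wstab} cannot be applied as a black box since $f$ and $f_r$ solve different equations, but your sketch of comparing the two force fields with an $O(r)$ error is essentially what Lemma \ref{lem:RUR} carries out).

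The genuine gap is in Step 2, precisely at the point you flag as the main obstacle. You propose to control the nonlinear coupling for the atomic measure by the crude bound $\lVert \chi_r \ast \rho_{\mu^N_r}\rVert_{L^\infty} \leq \lVert \chi_r \rVert_{L^\infty} \sim r^{-d}$, and simultaneously claim a Gr\"onwall constant $L(r)$ of ``controlled polylogarithmic'' size. These two statements are incompatible: the crude bound is only good enough to guarantee a well-defined $C^1$ characteristic flow, but if it is fed into the quantitative field estimates of Section \ref{sec:electric} (in particular the $C^{2}$ bound on $\widehat U_r$, which is doubly exponential in the $L^{(d+2)/d}$ norm of the regularised density, and the Lipschitz bound $\lVert \chi_r \ast K \ast h\rVert_{\mathrm{Lip}} \leq C|\log r|(1+\lVert h\rVert_{L^\infty})$), then $L(r)$ diverges polynomially (indeed much worse through the exponentials) in $r^{-1}$, and $e^{L(r)T_*}$ cannot be compensated by any polynomial rate $r^{d+2+\cdots}$ on the initial data; the Gr\"onwall argument does not close. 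The missing idea is the weak--strong mechanism of Lazarovici, which the paper imports as Lemma \ref{Lem_mu_moll}: $\lVert \chi_r \ast \rho_{\mu}\rVert_{L^p} \leq C_d\bigl(\lVert \rho_{f_r}\rVert_{L^p} + r^{-(d+2)}W_2^2(\rho_\mu,\rho_{f_r})\bigr)$, so the density of the regularised empirical measure is bounded by $M$ plus $r^{-(d+2)}$ times the very quantity being estimated. This makes the Gr\"onwall inequality nonlinear, and it is closed by introducing the truncated, anisotropically weighted functional $\hat D = 1 \wedge \bigl(r^{-(d+2)}\lambda^{-2}D\bigr)$ and optimising $\lambda_* \sim \sqrt{M(1+|\log r|)}$, which reduces the rate from $M|\log r|$ to $\sqrt{M|\log r|}$. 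It is exactly this bootstrap (keeping $\hat D<1$) that produces both the exponent $d+2$ and the $C_{M,T_*}|\log r|^{-1/2}$ correction in \eqref{config-rate}; in your write-up the exponent is asserted rather than derived, and without the density bound through $W_2$ and the truncation the asserted inequality with polylogarithmic $L(r)$ does not follow from the ingredients you list.
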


\subsection{Combined mean field and quasineutral limit}\label{sec:statement-MFQN}
As discussed in the introduction of \cite{IGP}, it is natural to 
 perform a combined mean field and quasineutral limit to derive the kinetic isothermal Euler system from a microscopic particle system. A result of this type for the classical VP system was proved in \cite{IGP}. Here, we extend this result to the case of massless electrons. Consider the following system for $(Z_i)_{i=1}^N = (X_i, V_i)_{i=1}^N \in (\br^{2d})^N$: 
\be\label{ODE:rQN}
\begin{cases} \dot X_i = V_i \\
\dot V_i = \frac{\e^{-2}}{N} \sum_{i \neq j} \chi_r \ast K \ast \chi_r (X_i - X_j) -  \e^{-2} \chi_r \ast K \ast e^{U_\e} ,
\end{cases}
\ee
where $U_\e$ satisfies
\be
\e^2 \Delta U_{\e}(x) = e^{U_\e(x)} - \frac{1}{N} \sum_{i=1}^N \chi_r(x - X_i) .
\ee
For this system, we can prove the following limit.

\begin{thm}[From particles to KIE] \label{thm:MFQN}
Let $d=2$ or $3$, and let $f_\e(0), g_\e(0)$ and $g(0)$ satisfy the assumptions of Theorem~\ref{thm:quasi}. Given $\e,r,N$, let $(Z_{0, i}^{(\e,r)})^N_{i=1} \in (\br^{2d})^N$ be a choice of initial data for the regularised and scaled $N$-particle ODE system \eqref{ODE:rQN}. Let $(Z_{t, i}^{(\e,r)})^N_{i=1}$ be the solution of \eqref{ODE:rQN} with this initial data and let $\mu^N_{\e,r}$ denote the associated empirical measure as defined in \eqref{def:mu}.

Let $T_*$ be the maximal time of convergence from Theorem~\ref{thm:quasi}. There exists a constant $C>0$ depending on $\{f_\e(0)\}_\e$ such that if the parameters $(r,\e) = (r(N), \e(N))$ and the initial data $\mu^N_{\e,r}(0)$ satisfy
\be \label{MFQN-initconv5}
r \leq \[ \,\mathbf{\overline{\exp}_3}{(C \e^{-2})} \]^{-1},\qquad
\lim_{N \to \infty}   \frac{W_2\left (\mu^N_{\e,r}(0),  f_\e(0)  \right)}{r^{(d+ 2+\eta)/2} } = 0,
\ee
for some $\eta > 0$, then
\be
\lim_{N \to \infty} \sup_{t \in [0,T_*]} W_1\left (\mu^N_{\e,r}(t), g(t) \right) = 0,
\ee
where $g$ is a solution of the KIE system \eqref{KIE} with initial data $g(0)$ on the time interval $[0,T_*]$.

\end{thm}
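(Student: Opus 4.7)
The natural strategy is to insert the auxiliary object $f_\e$, the unique bounded-density solution of the quasineutral-scaled VPME system \eqref{vpme-quasi} with initial datum $f_\e(0)$, and to split
\begin{equation}
W_1\bigl(\mu^N_{\e,r}(t), g(t)\bigr) \leq W_1\bigl(\mu^N_{\e,r}(t), f_\e(t)\bigr) + W_1\bigl(f_\e(t), g(t)\bigr).
\end{equation}
The second term vanishes uniformly on $[0, T_*]$ as $\e \to 0$ by Theorem~\ref{thm:quasi}. For the first term, observe that the empirical measure $\mu^N_{\e,r}$ associated with \eqref{ODE:rQN} solves, in the sense of distributions, the regularised version of \eqref{vpme-quasi} obtained by sandwiching $K$ with $\chi_r$ and convolving the ion density with $\chi_r$ in the Poisson-Boltzmann equation. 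Both $f_\e$ and $\mu^N_{\e,r}$ are then solutions of comparable equations at the same scale $\e$, so one can close the argument by an $\e$-dependent analogue of the mean-field stability estimate that underpins Theorem~\ref{thm:MFL}.

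To produce that analogue I would replay the proof of Theorem~\ref{thm:MFL} and track at each step the $\e$-dependence of the constants. The two crucial inputs are: first, an $L^\infty$ bound on $\rho_{f_\e}$ on $[0, T_*]$, coming from the compact-support hypothesis \eqref{quasi:data-spt} together with the support-growth estimates of Section~\ref{sec:growth}, which yields a bound of the form $\|\rho_{f_\e}\|_{L^\infty([0,T_*]\times\bt^d)} \leq \exp(c\,\e^{-2})$; and second, the Loeper-type strong-strong stability for \eqref{vpme-quasi} developed in Section~\ref{sec:stability}, applied to the pair $(f_\e, \mu^N_{\e,r})$, where the semilinear coupling $e^{U_\e}$ produces additional $\e$-dependent factors. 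Cascading these through the Gronwall-in-$t$ step leads to an admissibility condition of the form
\begin{equation}
W_2^2\bigl(\mu^N_{\e,r}(0), f_\e(0)\bigr) \leq r^{\,d + 2 + C(\e)\,|\log r|^{-1/2}},
\end{equation}
where $C(\e)$ blows up at worst as a double exponential in $\e^{-2}$. Requiring $|\log r| \geq C(\e)^2$, equivalently $r \leq \bigl[\,\mathbf{\overline{\exp}_3}(C\e^{-2})\bigr]^{-1}$, is then exactly what makes the $|\log r|^{-1/2}$ correction negligible, so that the second hypothesis of \eqref{MFQN-initconv5}, namely $W_2(\mu^N_{\e,r}(0), f_\e(0)) = o\bigl(r^{(d+2+\eta)/2}\bigr)$, suffices to satisfy the admissibility condition for all $N$ large. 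The stability estimate then yields $\sup_{t \in [0, T_*]} W_1\bigl(\mu^N_{\e,r}(t), f_\e(t)\bigr) \to 0$, and combined with the quasineutral bound above completes the proof.

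The principal technical obstacle is the careful propagation of $\e$ through the nonlinear Poisson-Boltzmann coupling. One application of Gronwall in the Loeper argument generates one exponential; the exponential growth in $\e^{-2}$ of the velocity support of $f_\e$, and hence of $\|\rho_{f_\e}\|_{L^\infty}$, generates a second; the nonlinearity $e^{U_\e}$ forces an iteration that promotes this to a third, which is precisely what dictates the $\mathbf{\overline{\exp}_3}$ rate appearing in \eqref{MFQN-initconv5}. One must also verify that the regularisation $\chi_r$ and the Coulomb kernel $K$ are compatible at every stage: the estimates on $\chi_r \ast K \ast \chi_r$ and on $\chi_r \ast \nabla U_\e$ established for Theorem~\ref{thm:MFL} carry over to the scaled setting provided the $\e$-dependence is bookkept. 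Once these quantitative refinements are in place, the triangle inequality above delivers Theorem~\ref{thm:MFQN}.
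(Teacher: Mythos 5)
Your outer decomposition (insert the VPME solution $f_\e$ and dispose of $W_1(f_\e(t),g(t))$ by Theorem~\ref{thm:quasi}) agrees with the paper, but the way you propose to handle $W_1(\mu^N_{\e,r}(t),f_\e(t))$ has a genuine gap. The empirical measure $\mu^N_{\e,r}$ does not solve \eqref{vpme-quasi}: it solves the regularised system \eqref{vpme-reg-QN}, and it has no bounded density, so the strong--strong stability of Section~\ref{sec:stability} (Proposition~\ref{prop:Wstab}), which you invoke ``applied to the pair $(f_\e,\mu^N_{\e,r})$'' and which requires both arguments to be bounded-density solutions of the \emph{same} unregularised equation, simply does not apply. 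The paper instead inserts a second intermediate object, $f_\e^{(r)}$, the bounded-density solution of the scaled regularised system \eqref{vpme-reg-QN} with the \emph{same} initial datum $f_\e(0)$, and proves two separate $\e$-quantified lemmas: Lemma~\ref{lem:wkstr-QN}, a weak--strong estimate between $\mu^N_{\e,r}$ and $f_\e^{(r)}$, both solving the same regularised equation, carried out with the anisotropic functional and the truncation $\hat D = 1 \wedge (r^{-(d+2)}\lambda^{-2}D)$; and Lemma~\ref{lem:RUR-QN}, comparing $f_\e^{(r)}$ with $f_\e$, two bounded-density solutions of different equations but with identical initial data, so that $D(0)=0$.

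This three-term splitting is not cosmetic: a single direct Gr\"onwall between $\mu^N_{\e,r}$ and $f_\e$, which is what your sketch amounts to, cannot be closed. In such a comparison the mismatch between the mollified force $\chi_r \ast \nabla U_{\e,r}$ and the true force $\nabla U_\e$ produces \emph{additive} error terms of size roughly $r^2\,\mathbf{\overline{\exp}_2}(C\e^{-2})$ (the analogues of $I_2$, $I_4$, $I_5$ in Lemma~\ref{lem:RUR-QN}), while the nonlinear dependence of the coefficients on $\lVert \chi_r \ast \rho_\mu \rVert_{L^\infty}$, controlled only through $M_\e + r^{-(d+2)}\lambda^{-2}D$ via Lemma~\ref{Lem_mu_moll}, forces one to work with the truncated functional at scale $r^{d+2}$. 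Relative to that scale the $r^2$ errors contribute on the order of $r^{-d}$ per unit time to $\hat D$, so $\hat D$ cannot be kept below $1$ and the bootstrap on the density of $\mu^N_{\e,r}$ collapses. The paper avoids exactly this clash: in Lemma~\ref{lem:wkstr-QN} there is no additive regularisation error because both measures solve the same regularised equation, and in Lemma~\ref{lem:RUR-QN} the estimate is linear (both densities are bounded a priori) and the trivial initial coupling $D(0)=0$ makes the $r^2$ errors harmless. Your heuristics for the $\mathbf{\overline{\exp}_3}(C\e^{-2})$ threshold and the $r^{(d+2+\eta)/2}$ rate are consistent with the paper, but they emerge from these two lemmas, not from Proposition~\ref{prop:Wstab}; to repair the proof you should restore the intermediate solution $f_\e^{(r)}$ (i.e.\ quantify in $\e$ both Lemma~\ref{lem:wkstr} \emph{and} Lemma~\ref{lem:RUR}, not just the stability step).
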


\subsection{Existence of admissible configurations}\label{sec:statement-typicality}

Note that
Theorems~\ref{thm:MFL} and \ref{thm:MFQN} hold only for special initial configurations for the particle system, i.e. those that converge sufficiently quickly to a given measure as $N$ tends to infinity. This raises the natural question of whether configurations satisfying this rate of convergence exist. The goal of the results in this section is to identify ranges of the parameters $r,N, \e$ for which suitable configurations exist. In fact we can show that there is, in some sense, a `large' set of admissible configurations.

One way to approximate a fixed measure $\nu$ by an empirical measure is to choose the points $(Z_i)_{i=1}^N$ defining the empirical measure by drawing independent samples from $\nu$. That is, $(Z_i)_{i=1}^N$ should have joint law $\nu^{\otimes N}$. This produces a random empirical measure $\nu^N$. Then, a law of large numbers result shows that $\nu^N$ converges to $\nu$ almost surely as $N$ tends to infinity. In Theorems~\ref{thm:typ-MF} and \ref{thm:typ-MFQN} we show that, for certain regimes of the parameters, this method of constructing the initial configurations will provide admissible configurations for Theorems~\ref{thm:MFL} and \ref{thm:MFQN} (respectively) with probability 1. 

\begin{thm}[Typicality for mean field limit with $\e=1$] \label{thm:typ-MF}
Let $d=2$ or $3$, and let $f_0$ be a choice of initial datum for \eqref{vpme} satisfying the assumptions of Theorem~\ref{thm:MFL} and having a finite $k$th moment for some $k > 4$:
\be
\int_{\bt^d \times \br^d} \(|x|^k + |v|^k\) f_0(\di x \di v) < + \infty.
\ee
Let $r = c N^{-\gamma}$ for some $\gamma$ satisfying
\be \label{def:gamma}
\gamma < \frac{1}{d+2} \min{\left \{ \frac{1}{d}, 1 - \frac{4}{k}\right \}} .
\ee
For each $N$, select initial configurations for the regularised $N$-particle system \eqref{ODEreg} by taking $N$ independent samples from $f_0$. Then with probability 1, this gives an admissible set of configurations for Theorem~\ref{thm:MFL}, i.e. the regularised mean field limit holds:
\be
\lim_{N \to \infty} \sup_{t \in [0,T_*]} W_2(\mu^N_r(t), f(t)) = 0.
\ee
\end{thm}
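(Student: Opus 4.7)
The plan is to reduce the statement to the deterministic criterion \eqref{config-rate} of Theorem~\ref{thm:MFL} and then verify that criterion almost surely using a quantitative law of large numbers for Wasserstein distances. Since $r = cN^{-\gamma}$, the denominator in \eqref{config-rate} equals $N^{-\gamma(d+2)+o(1)}$, so it suffices to prove
\begin{equation*}
W_2^2(\mu^N_r(0), f_0) \leq N^{-\gamma(d+2) - \eta} \qquad \text{a.s. for all sufficiently large } N,
\end{equation*}
for some $\eta > 0$, where $\mu^N_r(0) = \tfrac{1}{N}\sum_{i=1}^N \delta_{Z_i}$ and the $Z_i$ are i.i.d. samples from $f_0$.

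To produce such a bound I would first invoke the Fournier--Guillin quantitative law of large numbers applied in the phase space $\bt^d\times\br^d$ of dimension $2d$. Since $f_0$ has finite $k$-th moment with $k>4$, and since the critical exponent for $W_2^2$ in dimension $2d$ is $1/d$ (with an additional $\log N$ when $d=2$), the hypothesis $k>4$ guarantees that the moment-correction rate $(k-2)/k$ is at least $1/2\geq 1/d$, so the Monge--Kantorovich base rate dominates and
\begin{equation*}
\mathbb{E}\bigl[W_2^2(\mu^N_r(0), f_0)\bigr] \leq C_{f_0}\, N^{-1/d}.
\end{equation*}

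The main obstacle is to upgrade this expectation bound to an almost sure statement with a comparable rate: plain Markov's inequality yields $\Pr[W_2^2 > N^{-1/d + \eta'}] \lesssim N^{-\eta'}$, which is not summable for $\eta' \leq 1$. To bypass this I would use the Fournier--Guillin concentration estimate (or, equivalently, a higher-moment computation exploiting the finite $k$-th moment), which yields polynomial tails of the form $\Pr[W_2^2(\mu^N_r(0), f_0) \geq x] \lesssim (Nx)^{-(k-4)/4}$ in the relevant regime. Choosing $x = N^{-\alpha+\eta'}$ with any $\alpha < \min\{1/d,\,1-4/k\}$ produces tails summable in $N$; Borel--Cantelli then gives the almost-sure bound
\begin{equation*}
W_2^2(\mu^N_r(0), f_0) \leq N^{-\min\{1/d,\,1 - 4/k\} + \eta'} \qquad \text{eventually}.
\end{equation*}
The appearance of the exponent $1 - 4/k$ (rather than the expectation exponent $(k-2)/k$) and the requirement $k>4$ are precisely what is needed for the tail probabilities to be summable.

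Combining these estimates, the hypothesis $\gamma < \tfrac{1}{d+2}\min\{1/d,\,1 - 4/k\}$ allows one to pick $\eta'>0$ small enough that $\min\{1/d,\,1-4/k\} - \eta' > \gamma(d+2)$, verifying \eqref{config-rate} almost surely. Theorem~\ref{thm:MFL} then immediately yields $\sup_{t\in[0,T_*]} W_2(\mu^N_r(t), f(t)) \to 0$ almost surely, completing the proof.
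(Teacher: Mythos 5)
Your overall strategy is exactly the paper's: reduce to the deterministic criterion \eqref{config-rate} of Theorem~\ref{thm:MFL} (correctly noting that with $r=cN^{-\gamma}$ the denominator is $N^{-\gamma(d+2)+o(1)}$), apply the Fournier--Guillin concentration inequality in the phase space of dimension $m=2d$ with $p=2$, and conclude by Borel--Cantelli; the paper does precisely this with the events $A_N=\{W_2^2(\mu^N_0,f_0)>\tfrac12 r^{d+2+C|\log r|^{-1/2}}\}$ and Theorem~\ref{thm:conc-mom}.

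There is, however, a concrete error in the quantitative step. The polynomial tail you quote, $\mathbb{P}\bigl[W_2^2(\mu^N_r(0),f_0)\geq x\bigr]\lesssim (Nx)^{-(k-4)/4}$, is not the Fournier--Guillin estimate, and as written it does not yield summability under the hypothesis \eqref{def:gamma}: with $x_N=N^{-\beta+\eta'}$ and $\beta$ close to the binding value $1-\tfrac4k$, your tail is of order $N^{-(1-\beta+\eta')(k-4)/4}$, and $(1-\beta)\tfrac{k-4}{4}\approx \tfrac4k\cdot\tfrac{k-4}{4}=1-\tfrac4k<1$, so $\sum_N\mathbb{P}(A_N)$ diverges for small $\eta'$ and the full range of $\gamma$ in \eqref{def:gamma} is not covered. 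The correct moment term in Theorem~\ref{thm:conc-mom} is $b(N,x)=CN(Nx)^{-(k-\alpha)/2}$ for arbitrarily small $\alpha>0$; then $b(N,x_N)=CN^{1-(1-\beta+\eta')(k-\alpha)/2}$ is summable exactly when $1-\beta>\tfrac{4}{k-\alpha}$, i.e.\ (letting $\alpha\downarrow0$) when $\beta<1-\tfrac4k$, which is where the exponent $1-\tfrac4k$ and the hypothesis $k>4$ genuinely come from. You should also spell out the exponential term $a(N,x)$ of \eqref{def:aN}: since $p=2$ and $m=2d$, one is in the case $p=m/2$ for $d=2$ (logarithmic correction) and $p<m/2$ for $d=3$, and summability of $a(N,x_N)$ is what imposes $\beta<\tfrac1d$, i.e.\ the constraint $\gamma<\tfrac1{d(d+2)}$. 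With the tail bound corrected and both terms checked, your argument coincides with the paper's proof; the preliminary expectation bound $\mathbb{E}[W_2^2]\lesssim N^{-1/d}$ is fine as motivation but plays no role in the final argument.
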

\begin{remark}\label{rmk:assumptions}
It is worth noticing that our assumptions on $r$ are the same as the ones found by Lazarovici for the classical VP system in \cite{Laz}.
\end{remark}

\begin{thm}[Typicality with quasineutral scaling] \label{thm:typ-MFQN}
Let $d=2$ or $3$, and
$f_\e(0), g_\e(0)$ and $g(0)$ satisfy the assumptions of Theorem~\ref{thm:quasi}. Let $(r,\e) = (r(N), \e(N))$ be chosen to satisfy
\be
r \leq \[ \,\mathbf{\overline{\exp}_3}{(C \e^{-2})} \]^{-1}
\ee
where $C>0$ is the constant from Theorem~\ref{thm:MFQN}. Assume that $r = r(N)$ are chosen such that
\be
r = c N^{-\gamma},
\ee
where $c>0$ is an arbitrary constant and $\gamma$ satisfies
\be \label{def:gamma-QN}
0\leq \gamma < \frac{1}{d(d+2)}.
\ee
For each $N$, let the initial configurations for the particle system \eqref{ODE:rQN} be chosen by taking $N$ independent samples from $f_{\e}(0)$. Then, with probability 1, this procedure selects a set of configurations for which the combined mean field and quasineutral limit holds, that is 
\be
\lim_{N \to \infty} \sup_{t \in [0,T_*]} W_1(\mu^N_{\e,r}(t), g(t)) = 0,
\ee
where $g$ is the solution of \eqref{KIE} with initial datum $g(0)$ on the time interval $[0, T_*]$ provided by Theorem~\ref{thm:quasi}.
\end{thm}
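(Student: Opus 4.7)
The plan is to reduce the statement to Theorem~\ref{thm:MFQN} by establishing, almost surely under the i.i.d.\ sampling construction, the Wasserstein rate
$$W_2(\mu^N_{\e,r}(0), f_\e(0)) = o\bigl(r^{(d+2+\eta)/2}\bigr) \qquad \text{for some } \eta > 0.$$
All the remaining hypotheses of Theorem~\ref{thm:MFQN}---the analytic-perturbation structure of $f_\e(0)$, the convergence of $g_\e(0)$, and the constraint $r \leq \bigl[\mathbf{\overline{\exp}_3}(C\e^{-2})\bigr]^{-1}$---are already built into the hypotheses of the theorem, so the sole remaining task is a quantitative empirical-measure estimate.

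The essential probabilistic input would be a quantitative law of large numbers for empirical measures in Wasserstein distance of Fournier--Guillin type. On the $2d$-dimensional phase space $\bt^d\times\br^d$, for $W_2^2$ applied to a measure of bounded support, one obtains rates of order $N^{-1/2}\log N$ in the critical case $d=2$ and $N^{-1/3}$ in the subcritical case $d=3$, with a prefactor polynomial in the diameter of the support. Although $f_\e(0)$ depends on $\e$ and hence on $N$, its velocity support is bounded by $\exp(C_1\e^{-2})$; combining with $r=cN^{-\gamma}$ and the triple-exponential bound on $r$, one finds $\e^{-2}\lesssim \log\log\log N$, so this prefactor grows at most polylogarithmically in $N$ and does not affect any polynomial rate. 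To promote convergence in expectation to almost-sure convergence I would combine Markov's inequality with a concentration bound for $W_2$ of Bolley--Guillin--Villani type (valid since $f_\e(0)$ is compactly supported), or, more crudely, apply Markov to $W_2^{2p}$ for $p$ chosen large enough to make the probabilities summable in $N$, and then invoke the Borel--Cantelli lemma.

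It remains to match the exponents. With $r=cN^{-\gamma}$ we have $r^{d+2+\eta}=c'N^{-\gamma(d+2+\eta)}$, and the hypothesis $\gamma<1/(d(d+2))$ gives $\gamma(d+2)<1/d$. Picking $\eta$ sufficiently small, the quantity $N^{-\gamma(d+2+\eta)}$ strictly dominates the Fournier--Guillin rate $N^{-1/d}$ (equal to $N^{-1/2}$ for $d=2$ and $N^{-1/3}$ for $d=3$) even after absorbing the polylogarithmic corrections in the prefactor; hence $W_2^2(\mu^N_{\e,r}(0),f_\e(0))/r^{d+2+\eta}\to 0$ almost surely as $N\to\infty$, and Theorem~\ref{thm:MFQN} delivers the stated convergence to $g$. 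The main technical obstacle is not the rate computation itself---which mirrors the analogous step in Theorem~\ref{thm:typ-MF}---but the careful bookkeeping required to ensure that the $\e$-dependence of $f_\e(0)$ does not contaminate the polynomial decay rates in $N$; the triple-exponential safety margin enforced by the constraint on $r$ is precisely what makes this uniformity possible.
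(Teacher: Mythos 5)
Your proposal is correct and follows essentially the same route as the paper: reduce to Theorem~\ref{thm:MFQN} by verifying the initial-data rate $W_2^2(\mu^N_{\e,r}(0),f_\e(0))=o(r^{d+2+\eta})$ almost surely, using Fournier--Guillin concentration for compactly supported measures together with the observation that the constraint $r\leq[\,\mathbf{\overline{\exp}_3}(C\e^{-2})]^{-1}$ makes the velocity-support radius $\exp(C_1\e^{-2})$ sub-polynomial in $N$, and conclude via Borel--Cantelli. The only cosmetic difference is that the paper realises your ``prefactor polynomial in the diameter of the support'' by an explicit rescaling of the data into $(-1,1]^{2d}$ (so that the constants of \cite[Proposition 10]{FG} depend on $p$ and the dimension only) before unscaling the Wasserstein distance, which is the same idea made precise.
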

\begin{remark}
Observe that these assumptions ultimately result in a relationship between $N$ and $\e$ of the form 
\be
\e > \frac{C}{\sqrt{\log \log \log N}},
\ee 
so that the Debye length must converge to zero very slowly in relation to $N$. This rate is much slower than the one found for the classical case in \cite{IGP}. This is due to the singular nature of the interaction in the massless electrons case, which leads to the appearance of iterated exponential factors in our estimates for the electrostatic potential. 
\end{remark}

\section{Estimates on the electric field}
\label{sec:electric}

In this section we prove a series of quantitative regularity estimates on the electric field for the VPME system.
A crucial idea is to split the electrostatic potential $U_\e$ into two parts:
\be
U_\e = \bar U_\e + \widehat U_\e ,
\ee
where $\bar U_\e$ and $\widehat U_\e$ are solutions of the equations
\be
\e^2 \Delta \bar U_\e = 1 - \rho_\e, \qquad \e^2 \Delta \widehat U_\e = e^{\bar U_\e + \widehat U_\e} - 1 .
\ee
We will use the notation $\bar E_\e = - \nabla \bar U_\e$ and $\widehat E_\e = - \nabla \widehat U_\e$.
This approach was used previously in \cite{IHK1} and \cite{IGP-WP}. $\bar U_\e$ is the singular part of the potential, which behaves like the potential in the classical Vlasov-Poisson system, while $\widehat U_\e$ is a smoother correction.

\subsection{Regularity}

To state our regularity estimates on $\bar U_\e$ and $\widehat U_\e$, we introduce a space of zero-mean Sobolev functions, $\bar H$:
\be
\bar H : = \left \{ \phi \in W^{1,2}(\bt^d) : \int_{\bt^d} \phi (x) \di x = 0 \right \} .
\ee
We also define a set $A_d$ of allowed H\"{o}lder exponents, which depends on the dimension $d$.
\be \label{def:Ad}
A_d : =  \begin{cases} (0,1) \text{ if } d=2 \\ (0, \frac{1}{5}]  \text{ if } d=3 . \end{cases} .
\ee

The following is a version of Proposition~3.1 of \cite{IGP-WP} with quasineutral scaling. We will give the proof of these estimates below.
\begin{prop}[Regularity estimates on $\bar U_\e$ and $\widehat U_\e$] \label{prop:regU} Let $d = 2,3$. Let $h \in L^\infty \cap L^{(d+2)/d}(\bt^d)$. Then there exist unique $\bar U_\e \in \bar H$ and $\widehat U_\e \in W^{1,2} \cap L^{\infty}(\bt^d)$ satisfying
$$
\e^2\Delta \bar U_\e=1-h ,\qquad \e^2\Delta \widehat U_\e=e^{\bar U_\e+\widehat U_\e} - 1.
$$
Moreover we have the following estimates for some constant $C_{\alpha,d}>0$:
\begin{align}
\lVert  \bar U_\e \rVert_{C^{0,\alpha}(\bt^d)} & \leq C_{\alpha, d} \,\e^{-2} \left (1 +  \lVert h \rVert_{L^{\frac{d+2}{d}}(\bt^d)} \right ), &
\alpha \in A_d  \\
\lVert \bar U_\e \rVert_{C^{1, \alpha}(\bt^d)} & \leq C_{\alpha, d} \,\e^{-2} \left (1 +  \lVert h \rVert_{L^{\infty}(\bt^d)} \right ),  & \alpha \in (0,1) \\
\|\widehat U_\e \|_{C^{1,\alpha}(\bt^d)} & \leq C_{\alpha,d} \exp{\Bigl(C_{\alpha,d} \e^{-2}  \Bigl(1 +  \lVert h \rVert_{L^{\frac{d+2}{d}}(\bt^d)} \Bigr) \Bigr)} & \alpha \in (0,1)\\
\|\widehat U_\e \|_{C^{2,\alpha}(\bt^d)} & \leq C_{\alpha,d} \,\mathbf{\overline{\exp}_2} {\Bigl(C_{\alpha,d} \e^{-2}  \Bigl(1 +  \lVert h \rVert_{L^{\frac{d+2}{d}}(\bt^d)} \Bigr) \Bigr)} & \alpha \in A_d .
\end{align}

\end{prop}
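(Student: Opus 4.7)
The plan is to treat the two pieces of the splitting separately, since $\bar U_\e$ is determined linearly from $h$ alone while $\widehat U_\e$ is a nonlinear perturbation that receives $\bar U_\e$ as data. For $\bar U_\e$ I would rely on classical linear elliptic estimates (Calder\'on--Zygmund combined with Sobolev embedding), and for $\widehat U_\e$ on a variational existence argument, a maximum principle $L^\infty$ bound, and a Schauder-style bootstrap. The main challenge lies in the bootstrap: on the torus the natural $C^{0,\alpha}$ control of $\bar U_\e$ from the $L^{(d+2)/d}$ norm of $h$ only holds for $\alpha$ in the restricted range $A_d$, so when the nonlinearity $e^{\bar U_\e+\widehat U_\e}$ is differentiated each exponential arising must be tracked together with the $\e^{-2}$ prefactors from the quasineutral scaling.

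For the linear part, since $\int_{\bt^d}(1-h)\,dx=0$ in the VPME context (where $h=\rho$ is a probability density), the equation $\e^2\Delta\bar U_\e=1-h$ has a unique solution in $\bar H$ obtained by inverting the Laplacian on the zero-mean subspace via Fourier series. Calder\'on--Zygmund gives
\begin{equation}
\|\bar U_\e\|_{W^{2,p}(\bt^d)} \leq C_p\,\e^{-2}\bigl(1+\|h\|_{L^p(\bt^d)}\bigr)
\end{equation}
for every $1<p<\infty$. Choosing $p=(d+2)/d$ and using Morrey's embedding $W^{2,p}(\bt^d)\hookrightarrow C^{0,\alpha}(\bt^d)$ with $\alpha\leq 2-d/p$ yields the first estimate: the maximal $\alpha$ equals $1$ (strictly) for $d=2$ and $1/5$ for $d=3$, which is precisely $A_d$. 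For the $C^{1,\alpha}$ bound I would instead take $p$ arbitrarily large (legitimate since $h\in L^\infty$) and apply $W^{2,p}\hookrightarrow C^{1,\alpha}$ for $\alpha<1-d/p$.

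For the semilinear correction, existence and uniqueness of $\widehat U_\e$ come from a variational formulation: the equation $\e^2\Delta\widehat U_\e=e^{\bar U_\e+\widehat U_\e}-1$ is the Euler--Lagrange equation for the strictly convex, coercive functional
\begin{equation}
J(\phi):=\int_{\bt^d}\left(\tfrac{\e^2}{2}|\nabla\phi|^2+e^{\bar U_\e+\phi}-\phi\right)dx
\end{equation}
on $W^{1,2}(\bt^d)$, so the direct method produces a unique minimiser. An $L^\infty$ bound follows from the maximum principle: at a maximum point $\Delta\widehat U_\e\leq 0$, hence $e^{\bar U_\e+\widehat U_\e}\leq 1$, so $\widehat U_\e\leq -\bar U_\e\leq\|\bar U_\e\|_\infty$; the dual argument at a minimum gives
\begin{equation}
\|\widehat U_\e\|_{L^\infty}\leq\|\bar U_\e\|_{L^\infty}\leq C\,\e^{-2}\bigl(1+\|h\|_{L^{(d+2)/d}}\bigr).
\end{equation}

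To bootstrap higher regularity, I would rewrite the equation as $\Delta\widehat U_\e=\e^{-2}(e^{\bar U_\e+\widehat U_\e}-1)$: the right-hand side is then controlled in $L^\infty$ by $\e^{-2}\exp\bigl(C\e^{-2}(1+\|h\|_{L^{(d+2)/d}})\bigr)$, so Calder\'on--Zygmund gives $W^{2,p}$ control for every $p<\infty$, and $W^{2,p}\hookrightarrow C^{1,\alpha}$ yields the $C^{1,\alpha}$ bound (the polynomial $\e^{-2}$ prefactor being absorbed into the exponential). For the $C^{2,\alpha}$ estimate I would apply Schauder theory, which requires $C^{0,\alpha}$ control of the nonlinearity; using
\begin{equation}
\bigl[e^{\bar U_\e+\widehat U_\e}\bigr]_\alpha \leq e^{\|\bar U_\e+\widehat U_\e\|_{L^\infty}}\bigl([\bar U_\e]_\alpha+[\widehat U_\e]_\alpha\bigr),
\end{equation}
together with the $C^{0,\alpha}$ bound on $\bar U_\e$ (valid precisely for $\alpha\in A_d$) and the interpolation $[\widehat U_\e]_\alpha\leq C\|\widehat U_\e\|_{L^\infty}^{1-\alpha}\|\nabla\widehat U_\e\|_{L^\infty}^\alpha$ fed by the previous $C^{1,\alpha}$ step, one controls the $C^{0,\alpha}$ norm of the nonlinearity by an exponential in $\e^{-2}(1+\|h\|_{L^{(d+2)/d}})$, and composing with Schauder delivers at worst the claimed iterated exponential. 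The main obstacle will be the careful book-keeping in this last step, so as to keep the dependence expressed purely in terms of $\|h\|_{L^{(d+2)/d}}$ and to ensure that the $\e^{-2}$ factors enter only inside the iterated exponentials.
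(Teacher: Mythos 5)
Your overall architecture coincides with the paper's: Calder\'on--Zygmund plus Sobolev embedding for $\bar U_\e$ (with the ranges $A_d$ arising exactly as you compute from $p=\tfrac{d+2}{d}$), a variational construction of $\widehat U_\e$, then elliptic bootstrap to $C^{1,\alpha}$ and a Schauder step, via the H\"older seminorm of $e^{\bar U_\e+\widehat U_\e}$, to $C^{2,\alpha}$, with the $\e^{-2}$ factors entering through $\lVert \bar U_\e\rVert_{L^\infty}$ and producing the single and iterated exponentials. The one genuine divergence is the intermediate control of the nonlinearity: the paper never invokes a maximum principle, but instead tests the weak formulation with $e^{(n-1)\widehat U_\e}$ (made rigorous by truncation) to get $\lVert e^{\widehat U_\e}\rVert_{L^n(\bt^d)}\leq e^{M_1}$ for every $n$, where $M_1$ bounds $\lVert\bar U_\e\rVert_{L^\infty}$, and only then deduces $W^{2,n}$, hence $C^{1,\alpha}$ and a posteriori $L^\infty$, bounds on $\widehat U_\e$; you instead assert the pointwise bound $\lVert\widehat U_\e\rVert_{L^\infty}\leq\lVert\bar U_\e\rVert_{L^\infty}$ directly. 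That bound is true and yields the same constants ($\lVert e^{\bar U_\e+\widehat U_\e}\rVert_{L^\infty}\leq e^{2M_1}$, matching the paper), and it is arguably the cleaner route, but as written your justification evaluates $\Delta\widehat U_\e$ at a maximum point of a function that is so far only a $W^{1,2}$ minimiser, so you are implicitly using regularity you have not yet established. To keep the argument non-circular you should run it as a weak maximum principle, e.g.\ test the equation with a truncation of $(\widehat U_\e-\lVert\bar U_\e\rVert_{L^\infty})_+$ (and symmetrically for the lower bound): on the set where $\widehat U_\e>\lVert\bar U_\e\rVert_{L^\infty}$ one has $e^{\bar U_\e+\widehat U_\e}-1>0$, so both terms in the weak formulation are nonnegative and that set is null. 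This is exactly the same flavour of truncation the paper needs for its exponential test functions, and with it your proof goes through and delivers all four stated estimates, including the restriction $\alpha\in A_d$ in the last one coming from the H\"older bound on $\bar U_\e$.
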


\subsubsection{Regularity of $\bar U_\e$}

The singular part of the potential, $\bar U_\e$, satisfies a classical Poisson equation on $\bt^d$:
\be \label{eq:Poisson}
\e^2 \Delta \bar U_\e = 1 - h .
\ee
The existence of $\bar U_\e$ for $h \in L^\infty \cap L^{(d+2)/d}(\bt^d)$ is well understood, and there is no loss of generality to assume that $\bar U_\e$ has zero mean.

The regularity theory of \eqref{eq:Poisson} is likewise well understood. If $h \in L^p(\bt^d)$, then by Calder\'{o}n-Zygmund estimates for the Laplacian \cite{GT} we have the estimate
\be
\| \bar U_\e \|_{W^{2,p}(\bt^d)} \leq C_d \e^{-2} \left ( 1 + \| h \|_{L^p(\bt^d)} \right ) .
\ee

We can then use Sobolev embedding \cite{Evans} to deduce higher-order integrability or H\"{o}lder regularity for $\bar U_\e$. In the case $p = \frac{d+2}{d}$, we obtain
\be
\| \bar U_\e \|_{C^{0, \alpha}} \leq C_{\alpha, d} \e^{-2} \left ( 1 + \| h \|_{L^{\frac{d+2}{d}}(\bt^d)} \right )
\ee
for $\alpha \in A_d$, where $A_d$ is defined by \eqref{def:Ad}. In the case $p = \infty$ we obtain
\be
\lVert \bar U_\e \rVert_{C^{1, \alpha}(\bt^d)} \leq C_{\alpha, d} \,\e^{-2} \left (1 +  \lVert h \rVert_{L^{\infty}(\bt^d)} \right )
\ee
for all $ \alpha \in (0,1)$.

Later we will perform estimates on the trajectories of the flow induced by $E_\e$. In order to do this, we would like to have Lipschitz regularity for $E_\e$. Unfortunately, for $\bar E_\e$ we are not able to prove Lipschitz regularity under our assumptions. However, we do have a log-Lipschitz regularity estimate. See \cite[Lemma 8.1]{BM} for the proof in the case where the spatial domain is $\br^2$, or \cite[Lemma 3.3]{IGP-WP} for the case $\bt^d$ for general $d$.

\begin{lem}[Log-Lipschitz regularity of $\bar E_\e$] \label{lem:logLip}
Let $\bar U_\e$ be a solution of 
$$
\e^2\Delta \bar U_\e= h
$$
for $h \in L^{\infty}(\bt^d)$. Then
$$
|\nabla \bar U_\e (x) - \nabla \bar U_\e(y)| \leq \e^{-2} C \lVert h \rVert_{L^{\infty}} |x-y|  \left ( 1 + \log{\left ( \frac{\sqrt{d}}{|x-y|} \right )} \mathbbm{1}_{|x-y|\leq \sqrt{d}} \right ) .
$$
\end{lem}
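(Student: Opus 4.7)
The idea is to represent $\nabla \bar U_\e$ via the Coulomb kernel and carry out a standard log-Lipschitz estimate, splitting the convolution into a near-singularity piece and a far piece. Using the Green's function $G$ from \eqref{def:G} (with smooth correction $G_0$ as in \eqref{def:G0}) and setting $K = \nabla G$, we can write
\be
\nabla \bar U_\e(x) = \e^{-2} (K \ast h)(x),
\ee
so that
\be
|\nabla \bar U_\e(x) - \nabla \bar U_\e(y)| \leq \e^{-2}\, \|h\|_{L^\infty}\, I(\delta), \qquad I(\delta) := \int_{\bt^d} |K(x-z) - K(y-z)|\,\di z,
\ee
with $\delta := |x-y|$. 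Since the correction $\nabla G_0$ is bounded and Lipschitz on $\bt^d$, it contributes $O(\delta)$ to $I(\delta)$, which is absorbed into the final bound. The task therefore reduces to controlling the contribution of the singular part $K_{\mathrm{sing}}(w) \sim |w|^{-(d-1)}$.

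The key step is to split the integration region into a near-singularity zone $N := \{z : |x-z| \leq 2\delta\}$ and a far zone $F := \{z : |x-z| > 2\delta\}$. On $N$, I will use the triangle inequality and estimate each term separately, using $\int_{|w| \leq R} |w|^{-(d-1)}\,\di w \lesssim R$ in both dimensions $d=2,3$; this yields a contribution of order $\delta$. On $F$, the segment joining $x-z$ and $y-z$ stays away from the origin (since $|y-z| \geq |x-z| - \delta \geq \tfrac{1}{2}|x-z|$), so a mean-value estimate applies with $|\nabla K(\xi)| \lesssim |\xi|^{-d}$, giving
\be
|K(x-z) - K(y-z)| \leq C\,\delta\, |x-z|^{-d}.
\ee
Integrating this over $F \cap \{|x-z| \leq \sqrt{d}\}$ produces the logarithmic factor via $\int_{2\delta \leq |w| \leq \sqrt{d}} |w|^{-d}\,\di w \lesssim \log(\sqrt{d}/\delta)$, so this contribution is of order $\delta\,\log(\sqrt{d}/\delta)$.

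Combining the two regions gives $I(\delta) \leq C\,\delta\,(1 + \log(\sqrt{d}/\delta))$ for $\delta \leq \sqrt{d}$, and since the diameter of $\bt^d$ is at most $\sqrt{d}$, the case $\delta > \sqrt{d}$ is vacuous. Multiplying by $\e^{-2}\|h\|_{L^\infty}$ yields the claimed bound. The only mild technical point — and the step I'd check most carefully — is ensuring that the zero-mean normalisation of $\bar U_\e$ (needed for solvability on the torus) and the periodisation of $K$ coming from $G_0$ do not affect the estimate on $I(\delta)$; since $\nabla G_0$ is smooth and bounded on $\bt^d$ its contribution is at worst $C\delta$, which is subsumed. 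No serious obstacle is expected.
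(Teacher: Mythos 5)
Your proof is correct, and it is essentially the argument the paper relies on: the paper does not prove this lemma itself but cites \cite[Lemma 8.1]{BM} and \cite[Lemma 3.3]{IGP-WP}, whose proofs use exactly your representation $\nabla \bar U_\e = \e^{-2}K\ast h$ with the near/far splitting of the Coulomb kernel (bounding $|K|\sim |w|^{-(d-1)}$ near the singularity and $|\nabla K|\sim |w|^{-d}$ away from it, the smooth correction $\nabla G_0$ contributing only $O(|x-y|)$). The periodisation point you flag is harmless for just the reason you give, and is handled the same way in the cited references; note also that on $\bt^d$ one always has $|x-y|\le \sqrt{d}/2$, so the case $|x-y|>\sqrt{d}$ is indeed vacuous.
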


\subsubsection{Regularity of $\widehat U_\e$}

In \cite[Proposition 3.4]{IGP-WP}, we proved the existence of a unique $W^{1,2}(\bt^d)$ solution $\widehat U$ of the equation
\be \label{eq:Uhat}
\Delta \widehat U = e^{\bar U + \widehat U} - 1,
\ee
under the assumption that $\bar U \in L^\infty (\bt^d) \cap W^{1,2}(\bt^d)$. We showed that this solution in fact belongs to the H\"{o}lder space $C^{1,\alpha}(\bt^d)$ for $\alpha \in (0,1)$. Furthermore, if $\bar U \in C^{0,\alpha}(\bt^d)$, then $\widehat U \in C^{2,\alpha}(\bt^d)$. The argument in \cite[Proposition 3.4]{IGP-WP} works for general $\e$. We will revisit part of the proof here in order to quantify how the constants depend on $\e$. This dependence will be crucial in our later proofs.

\begin{lem} \label{lem:Uhat}
Consider the nonlinear Poisson equation
\be \label{eq:Uhat-e}
\e^2 \Delta \widehat U_\e = e^{\bar U_\e + \widehat U_\e} - 1 .
\ee
Assume that $\bar U_\e \in L^\infty (\bt^d) \cap W^{1,2}(\bt^d)$ with
\be
\| \bar U_\e \|_{L^\infty(\bt^d)} \leq M_1 .
\ee
Then there exists a unique solution $\widehat U_\e \in W^{1,2}(\bt^d)$ of \eqref{eq:Uhat-e}. Moreover, $\widehat U_\e \in C^{1,\alpha}(\bt^d)$ for all $\alpha \in (0,1)$, with the estimate
\be \label{hatU-C1}
\| \widehat U_\e \|_{C^{1,\alpha}(\bt^d)} \leq C \e^{-2} \left ( e^{2 M_1} + 1 \right ) .
\ee

If, in addition, $\bar U_\e$ is H\"{o}lder regular with the estimate
\be
\| \bar U_\e \|_{C^{0,\alpha}(\bt^d)} \leq M_2
\ee
for some $\alpha \in (0,1)$, then $\widehat U_\e \in C^{2,\alpha}(\bt^d)$ with the estimate
\be \label{hatU-C2}
\| \widehat U_\e \|_{C^{1,\alpha}(\bt^d)} \leq \left [ M_2 + C \e^{-2} \left ( e^{2 M_1} + 1 \right )\right ] \exp{\left [ C \e^{-2} \left ( e^{2 M_1} + 1 \right )\right ]} .
\ee
\end{lem}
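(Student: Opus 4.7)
The plan is to prove this lemma in four steps, essentially recycling the structure of \cite[Proposition 3.4]{IGP-WP} but carefully tracking the dependence on $\e$ at each stage. First, existence and uniqueness of $\widehat U_\e \in W^{1,2}(\bt^d)$ is qualitatively unchanged by the factor $\e^2$ in \eqref{eq:Uhat-e}: one minimises the strictly convex functional
$$J[\phi] := \int_{\bt^d} \left( \frac{\e^2}{2}|\nabla \phi|^2 + e^{\bar U_\e + \phi} - \phi \right) \di x,$$
whose unique critical point solves \eqref{eq:Uhat-e} (the compatibility condition $\int(e^{\bar U_\e + \widehat U_\e} - 1) \di x = 0$ is automatic).

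Next I would derive an $L^\infty$ bound via the maximum principle: at a point $x_*$ where $\widehat U_\e$ attains its maximum, $\Delta \widehat U_\e(x_*) \leq 0$ forces $e^{\bar U_\e(x_*) + \widehat U_\e(x_*)} \leq 1$, so $\widehat U_\e(x_*) \leq -\bar U_\e(x_*) \leq M_1$; a symmetric argument at a minimum point yields $\widehat U_\e \geq -M_1$. Hence $\|\widehat U_\e\|_{L^\infty(\bt^d)} \leq M_1$, which in turn gives $\|e^{\bar U_\e + \widehat U_\e} - 1\|_{L^\infty(\bt^d)} \leq e^{2M_1} + 1$, uniformly in $\e$.

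For the $C^{1,\alpha}$ bound \eqref{hatU-C1}, I would view \eqref{eq:Uhat-e} as a linear Poisson equation $\e^2 \Delta \widehat U_\e = g$ with zero-mean right-hand side $g := e^{\bar U_\e + \widehat U_\e} - 1$. The Calder\'on--Zygmund estimate for the Laplacian on $\bt^d$ gives $\|D^2 \widehat U_\e\|_{L^p(\bt^d)} \leq C_p \e^{-2} \|g\|_{L^p(\bt^d)}$ for every $p < \infty$; combined with the $L^\infty$ bound on $\widehat U_\e$ itself, this upgrades to a full $W^{2,p}$ bound. Morrey's embedding $W^{2,p} \hookrightarrow C^{1,\alpha}(\bt^d)$ for $p$ sufficiently large then yields \eqref{hatU-C1}.

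For the $C^{2,\alpha}$ bound \eqref{hatU-C2}, I would apply Schauder theory to the same linear Poisson equation, which requires a H\"older estimate on $g$. The key input is the elementary composition inequality $\|e^f\|_{C^{0,\alpha}} \leq e^{\|f\|_{L^\infty}}\bigl(1 + [f]_{C^{0,\alpha}}\bigr)$: applied to $f = \bar U_\e + \widehat U_\e$, together with the bound $\|\widehat U_\e\|_{C^{0,\alpha}} \leq \|\widehat U_\e\|_{C^{1,\alpha}} \leq C\e^{-2}(e^{2M_1}+1)$ just established and the hypothesis $\|\bar U_\e\|_{C^{0,\alpha}} \leq M_2$, it gives
$$\|g\|_{C^{0,\alpha}(\bt^d)} \leq e^{2M_1}\bigl(1 + M_2 + C\e^{-2}(e^{2M_1}+1)\bigr).$$
Schauder then yields $\|\widehat U_\e\|_{C^{2,\alpha}} \leq C\e^{-2}\|g\|_{C^{0,\alpha}}$, which after rearrangement and using the crude inequality $x \leq e^x$ to absorb the resulting polynomial factor into an exponential produces \eqref{hatU-C2}. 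The main subtlety in the whole argument is the careful bookkeeping of the $\e^{-2}$ and $e^{2M_1}$ factors as they compound through the composition estimate; this is precisely what forces the appearance of the nested exponential on the right-hand side of \eqref{hatU-C2}, and what will later propagate through the paper to yield the iterated exponentials in the quasineutral estimates.
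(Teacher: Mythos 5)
Your outline reaches the stated estimates, but it takes a genuinely different route from the paper at the key a priori step. The paper never proves a two-sided $L^\infty$ bound on $\widehat U_\e$; instead it tests the weak formulation with $e^{\widehat U_\e}$ and $e^{(n-1)\widehat U_\e}$ (made rigorous by truncation) to get $\| e^{\widehat U_\e}\|_{L^n(\bt^d)} \leq e^{M_1}$ uniformly in $n$, then applies Calder\'on--Zygmund in $L^n$ and Sobolev embedding for \eqref{hatU-C1}, and for \eqref{hatU-C2} it bounds $e^{\|U_\e\|_{L^\infty}}$ only by $\exp\left[C\e^{-2}(1+e^{2M_1})\right]$ (using the $C^{1,\alpha}$ bound on $\widehat U_\e$), which is where the exponential factor in \eqref{hatU-C2} comes from. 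Your route -- an $L^\infty$ bound $|\widehat U_\e|\leq M_1$, then $g\in L^\infty$, CZ, Morrey, the composition inequality with $e^{\|U_\e\|_{L^\infty}}\leq e^{2M_1}$, and Schauder -- is more elementary and in fact yields a sharper $C^{2,\alpha}$ bound (polynomial in $\e^{-2}$ times $e^{2M_1}$ factors), which then implies \eqref{hatU-C2} by the crude absorption you describe; so the nested exponential in \eqref{hatU-C2} is an artefact of the paper's weaker intermediate $L^\infty$ control, not something your argument forces.

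The one step that needs repair is the maximum principle as you state it: the solution is a priori only a $W^{1,2}$ weak solution (with $e^{\bar U_\e+\widehat U_\e}$ merely integrable), so you cannot evaluate $\Delta \widehat U_\e$ at an interior maximum point, and you cannot bootstrap regularity first without circularity. The fix is a weak (Stampacchia-type) version: test the equation with $\phi=\min\left((\widehat U_\e - M_1)_+,k\right)\in W^{1,2}\cap L^\infty$; on $\{\widehat U_\e>M_1\}$ one has $\bar U_\e+\widehat U_\e>0$, so both $\e^2\int\nabla\widehat U_\e\cdot\nabla\phi\,\di x\geq 0$ and $\int (e^{\bar U_\e+\widehat U_\e}-1)\phi\,\di x\geq 0$, and their vanishing sum forces $\widehat U_\e\leq M_1$ a.e.; the symmetric truncation gives $\widehat U_\e\geq -M_1$. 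This is the same flavour of truncation argument the paper uses to justify its exponential test functions, so once inserted your proof is complete. Two cosmetic points: the Schauder estimate should be written as $\|\widehat U_\e\|_{C^{2,\alpha}}\leq C\left(\|\widehat U_\e\|_{L^\infty}+\e^{-2}\|e^{U_\e}-1\|_{C^{0,\alpha}}\right)$, with the zeroth-order term controlled by your $L^\infty$ bound, and the constant $C_p$ in the CZ step depends on $p$, which is harmless since a single large $p$ suffices for Morrey.
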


\begin{proof}
The existence and uniqueness of $\widehat U_\e \in W^{1,2}(\bt^d)$ satisfying \eqref{eq:Uhat-e} can be proved by using a Calculus of Variations method, as in \cite[Proposition 3.4]{IGP-WP}. Here we will focus on the regularity estimates \eqref{hatU-C1} and \eqref{hatU-C2}.

First, we want to prove $L^p(\bt^d)$ estimates on $e^{U_\e}$. We will use this to deduce estimates on $\widehat U_\e$ using regularity theory for the Poisson equation. Since $\widehat U_\e$ is a weak solution of \eqref{eq:Uhat-e}, we know that for all test functions $\phi \in W^{1,2}(\bt^d) \cap L^\infty(\bt^d)$,
\be
\int_{\bt^d} \e^2 \, \nabla \widehat U_\e \cdot \nabla \phi + \left ( e^{\bar U_\e + \widehat U_\e} - 1 \right ) \phi \di x = 0 .
\ee
If we formally take $e^{\widehat U_\e}$ as a test function, we obtain
\be \label{eq:Uhat-wk}
\e^2 \int_{\bt^d} |\nabla \widehat U_\e|^2 e^{\widehat U_\e} \di x + \int_{\bt^d} e^{\bar U_\e} \cdot e^{2 \widehat U_\e} \di x = \int_{\bt^d} e^{\widehat U_\e} \di x .
\ee
Then
\be
 \int_{\bt^d} e^{\bar U_\e} \cdot e^{2 \widehat U_\e} \di x \leq \int_{\bt^d} e^{\widehat U_\e} \di x .
\ee
We can make this estimate rigorous by using a truncation argument, as described in \cite{IGP-WP}. Since
\be
\|\bar U_\e\|_{L^\infty(\bt^d)} \leq M_1,
\ee
we have
\be
 \int_{\bt^d} e^{\bar U_\e} \cdot e^{2 \widehat U_\e} \di x \geq e^{- M_1} \| e^{\widehat U_\e} \|^2_{L^2(\bt^d)} .
\ee
Similarly,
\be
\| e^{U_\e} \|_{L^1(\bt^d)} = \int_{\bt^d} e^{\bar U_\e + \widehat U_\e} \di x \geq e^{- M_1}  \int_{\bt^d} e^{\widehat U_\e} \di x .
\ee
By \eqref{eq:Uhat-e},
\be
0 = \e^2 \int_{\bt^d} \Delta \widehat U_\e \di x = \int_{\bt^d} \left ( e^{U_\e} - 1 \right ) \di x,
\ee
which implies that
\be
\| e^{U_\e} \|_{L^1(\bt^d)} = 1 .
\ee
Therefore,
\be
 \| e^{\widehat U_\e} \|^2_{L^2(\bt^d)} \leq e^{2 M_1} .
\ee

Similarly, taking $e^{(n-1) \widehat U_\e}$ as a test function, we have 
\be
\int_{\bt^d} e^{(n-1) \widehat U_\e} \di x = \e^2 (n-1) \int_{\bt^d} |\nabla \widehat U_\e|^2 e^{\widehat U_\e} \di x + \int_{\bt^d} e^{\bar U_\e} \cdot e^{n \widehat U_\e} \di x 
\ee
and so
\be
 \| e^{\widehat U_\e} \|^n_{L^n(\bt^d)} \leq e^{M_1}  \| e^{\widehat U_\e} \|^{n-1}_{L^{n-1}(\bt^d)} \, ;
\ee
again we can make this rigorous by a truncation argument. By induction, we conclude that for all integer $n$,
\be \label{est:eU-Lp}
 \| e^{\widehat U_\e} \|_{L^n(\bt^d)} \leq e^{M_1} .
\ee

Next, we use these estimates to deduce regularity for $\widehat U_\e$. By Calder\'{o}n-Zygmund estimates for the Poisson equation,
\begin{align}
\| \widehat U_\e \|_{W^{2,n}(\bt^d)}  \leq C \e^{-2} \| e^{\bar U_\e + \widehat U_\e} \|_{L^n(\bt^d)} 
 \leq C \e^{-2} \left ( e^{2 M_1} + 1 \right ) .
\end{align}
By Sobolev embedding with $n$ sufficiently large, for any $\alpha \in (0,1)$,
\be
\| \widehat U_\e \|_{C^{1,\alpha}(\bt^d)} \leq C \e^{-2} \left ( e^{2 M_1} + 1 \right ) .
\ee

Now assume that for some $\alpha$,
\be
\| \bar U_\e \|_{C^{0,\alpha}(\bt^d)} \leq M_2 .
\ee
Then $U_\e \in C^{0,\alpha}(\bt^d)$, with
\begin{align}
\| U_\e \|_{C^{0,\alpha}(\bt^d)}  \leq \| \bar U_\e \|_{C^{0,\alpha}(\bt^d)} + \| \widehat U_\e \|_{C^{0,\alpha}(\bt^d)}  \leq M_2 + C \e^{-2} \left ( e^{2 M_1} + 1 \right ) .
\end{align}
Since 
\be
\left \lvert e^{U_\e(x)} - e^{U_\e(y)} \right \rvert \leq e^{\max\{ U_\e(x), U_\e(y) \}} \left | {U_\e(x)} - {U_\e(y)} \right | ,
\ee
it follows that
\begin{align}
\|e^{U_\e} \|_{C^{0,\alpha}(\bt^d)} \leq \| U_\e \|_{C^{0,\alpha}(\bt^d)} \exp{ \left [ C \e^{-2} \left ( 1 + e^{2 M_1} \right ) \right ]} .
\end{align}
By Schauder estimates \cite[Chapter 4]{GT},
\begin{align}
\| \widehat U_\e \|_{C^{2,\alpha}(\bt^d)} & \leq C \left ( \| \widehat U_\e \|_{L^\infty (\bt^d)} + \e^{-2 }\|e^{U_\e} - 1 \|_{C^{0,\alpha}(\bt^d)} \right ) \\
& \leq  \left [ M _2 +  C \e^{-2} \left ( 1 + e^{2 M_1} \right ) \right ] \exp{ \left [  C \e^{-2} \left ( 1 + e^{2 M_1} \right ) \right ] } .
\end{align}

\end{proof}

\subsection{Stability}

We are also interested in the stability of $U_\e$ with respect to the charge density. We want to prove a quantitative version of  \cite[Proposition 3.5]{IGP-WP}.
\begin{prop} \label{prop:Ustab}
For each $i=1,2$, let $\bar U_\e^{(i)}$ be a zero-mean solution of
\begin{align}
\e^2\Delta \bar U_\e^{(i)}= h_i - 1,
\end{align}
where $h_i \in L^{\infty} \cap L^{(d+2)/d}(\bt^d)$. Then
\be \label{stab-Ubar}
\lVert \nabla \bar U_\e^{(1)} - \nabla \bar U_\e^{(2)} \rVert^2_{L^2(\bt^d)} \leq \e^{-4} \max_i\, \lVert h_i \rVert_{L^{\infty}(\bt^d)}  \, W^2_2(h_1, h_2) .
\ee

Now, in addition, let $\widehat U_\e^{(i)}$ be a solution of
\begin{align}
\e^2\Delta \widehat U_\e^{(i)}= e^{\bar U_\e^{(i)} + \widehat U_\e^{(i)}} - 1 .
\end{align}
Then
\begin{align}
\label{stab-Uhat}
\lVert \nabla \widehat U_\e^{(1)} - \nabla \widehat U_\e^{(2)} \rVert^2_{L^2(\bt^d)} &\leq \mathbf{\overline{\exp}_2} {\left [ C_d \e^{-2} \left ( 1 + \max_i\, \lVert h_i \rVert_{L^{(d+2)/d}(\bt^d)}  \right ) \right ]}\\
&\times \max_i\, \lVert h_i \rVert_{L^{\infty}(\bt^d)} \, W^2_2(h_1, h_2).
\end{align}
\end{prop}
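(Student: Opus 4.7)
The plan is to prove the two estimates separately. For the linear estimate \eqref{stab-Ubar}, I would recognise this as Loeper's $\dot H^{-1}$--$W_2$ stability estimate for the Poisson equation, modified only by the quasineutral scaling. Rescaling $V_i := \e^2 \bar U_\e^{(i)}$ reduces the problem to $\Delta V_i = h_i - 1$, to which the classical estimate $\|\nabla V_1 - \nabla V_2\|_{L^2}^2 \leq \max_i \|h_i\|_{L^\infty} W_2^2(h_1,h_2)$ applies; undoing the rescaling produces the $\e^{-4}$ prefactor in \eqref{stab-Ubar}. Integrating the equations forces $\int h_i = 1$, so the $h_i$ are genuine probability densities on $\bt^d$ and the Wasserstein term is well-defined. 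The classical Loeper estimate itself would follow the usual route: introduce the displacement interpolation $h_t$ between $h_1$ and $h_2$, differentiate $\|\nabla V_t\|_{L^2}^2$ in $t$, integrate by parts, and conclude via Cauchy--Schwarz using the $L^\infty$ bound that is propagated along the interpolation.

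For the nonlinear estimate \eqref{stab-Uhat}, set $W := \widehat U_\e^{(1)} - \widehat U_\e^{(2)}$ and $B := \bar U_\e^{(1)} - \bar U_\e^{(2)}$. The difference solves $\e^2 \Delta W = e^{U_\e^{(1)}} - e^{U_\e^{(2)}}$, and via the fundamental theorem of calculus I would rewrite the right-hand side as $A\,(W + B)$, where
\[
A(x) := \int_0^1 e^{t\, U_\e^{(1)}(x) + (1-t)\, U_\e^{(2)}(x)}\,\di t > 0.
\]
Testing against $W$ and integrating by parts gives
\[
\e^2 \int_{\bt^d} |\nabla W|^2\,\di x + \int_{\bt^d} A\, W^2\,\di x = -\int_{\bt^d} A\, W\, B\,\di x.
\]
The positivity of $A$---a direct manifestation of the monotonicity of the exponential---is the decisive ingredient: Young's inequality lets me absorb the $W^2$ term into the left-hand side, leaving
\[
\e^2 \int_{\bt^d} |\nabla W|^2\,\di x \leq \tfrac{1}{2}\, \|A\|_{L^\infty}\int_{\bt^d} B^2\,\di x.
\]
Since $B$ has zero mean, Poincaré's inequality converts $\|B\|_{L^2}^2$ into $\|\nabla B\|_{L^2}^2$, and the first part \eqref{stab-Ubar} bounds this by $\e^{-4}\max_i\|h_i\|_{L^\infty}\,W_2^2(h_1,h_2)$. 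The prefactor $\|A\|_{L^\infty}$ is controlled by $\exp(\max_i \|U_\e^{(i)}\|_{L^\infty})$; substituting the a priori bounds from Proposition~\ref{prop:regU}---in particular $\|\widehat U_\e^{(i)}\|_\infty \leq \exp(C\e^{-2}(1+\|h_i\|_{L^{(d+2)/d}}))$---yields the double-exponential $\mathbf{\overline{\exp}_2}$ prefactor in \eqref{stab-Uhat}.

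The main obstacle is that $\widehat U_\e$ satisfies a nonlinear equation, so Loeper's linear argument cannot be transplanted directly. In particular, because the nonlinearity pins $\widehat U_\e$ uniquely rather than up to a constant, one cannot freely normalise $W$ to have zero mean and must find an intrinsic substitute for a Poincaré-type inequality on $W$. The resolution, as sketched, is precisely the monotonicity term $\int A\,W^2\,\di x$, which supplies the missing $L^2$ control on $W$ automatically. A secondary but essential technical point is keeping track of the $\e$-dependence: since $\|\widehat U_\e\|_\infty$ is already of order $\exp(\e^{-2})$, the prefactor $\|A\|_{L^\infty}$ is naturally of order $\exp\exp(\e^{-2})$, which is the structural source of the iterated-exponential $\mathbf{\overline{\exp}_2}$ in the final estimate.
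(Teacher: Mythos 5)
Your proposal is correct and follows essentially the same route as the paper: \eqref{stab-Ubar} is Loeper's estimate with the quasineutral scaling (Lemma~\ref{lem:Loep}), and \eqref{stab-Uhat} is obtained by testing the difference equation for $\widehat U_\e^{(1)}-\widehat U_\e^{(2)}$ against that difference, using the monotonicity of the exponential to absorb the quadratic term (this is exactly the content of Lemma~\ref{lem:hatU-stab}, which the paper delegates to the cited inequality from \cite{IGP-WP}), then the Poincar\'e inequality for the zero-mean $\bar U_\e^{(i)}$, the first estimate, and the $L^\infty$ bounds of Proposition~\ref{prop:regU} to produce the $\mathbf{\overline{\exp}_2}$ prefactor. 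Your explicit factor $A=\int_0^1 e^{tU_\e^{(1)}+(1-t)U_\e^{(2)}}\di t$ is just a self-contained rendering of that cited step, and, as in the paper, the residual powers of $\e^{-1}$ (e.g.\ the $\e^{-6}$) are absorbed into the double exponential.
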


For the stability of $\nabla \bar U_\e$, we use a result proved originally by Loeper \cite{Loep} in the whole space $\br^d$, and adapted to the torus $\bt^d$ in \cite{IHK2}.
\begin{lem}[Loeper-type estimate for Poisson's equation] \label{lem:Loep}
For each $i=1,2$, let $\bar U_\e^{(i)}$ be a solution of
\begin{align}
\e^2\Delta \bar U_\e^{(i)}= h_i - 1,
\end{align}
where $h_i \in L^{\infty}(\bt^d)$. Then
$$
\lVert \nabla \bar U_\e^{(1)} - \nabla \bar U_\e^{(2)} \rVert^2_{L^2(\bt^d)} \leq \e^{-4} \max_i\, \lVert h_i \rVert_{L^{\infty}(\bt^d)} \, W^2_2(h_1, h_2) .
$$
\end{lem}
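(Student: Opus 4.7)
The plan is to adapt the original Loeper argument \cite{Loep}, in the toral version used in \cite{IHK2}, to the rescaled setting at hand. Introduce the difference
\[
W := \bar U_\e^{(1)} - \bar U_\e^{(2)},
\]
which has zero mean on $\bt^d$ and satisfies $\e^2 \Delta W = h_1 - h_2$. Multiplying by $W$ and integrating by parts gives
\[
\|\nabla W\|_{L^2(\bt^d)}^2 = \e^{-2} \int_{\bt^d} W\,(h_2 - h_1)\di x.
\]
The strategy will be to estimate the right-hand side by the square root of $\|\nabla W\|_{L^2}^2$ times $W_2(h_1,h_2)$, which, once we cancel one factor of $\|\nabla W\|_{L^2}$ and square, produces the claimed estimate.

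The main tool will be displacement interpolation. Since $h_1, h_2 \in L^\infty(\bt^d)$ are probability densities, there exists an optimal transport map $T:\bt^d\to\bt^d$ with $T_{\#}h_1 = h_2$ realising $W_2^2(h_1,h_2) = \int_{\bt^d}|T(x)-x|^2 h_1(x)\di x$, where the distance is the geodesic distance on $\bt^d$. Writing
\[
\int_{\bt^d} W\,(h_2-h_1)\di x = \int_{\bt^d}\bigl[W(T(x))-W(x)\bigr] h_1(x)\di x,
\]
I interpolate along the constant-speed geodesic $T_s(x):= x + s(T(x)-x)$ (with the subtraction understood on the universal cover) to obtain
\[
W(T(x))-W(x) = \int_0^1 \nabla W(T_s(x)) \cdot (T(x)-x)\di s.
\]
Applying Cauchy--Schwarz in $(s,x)$ and pushing forward by $T_s$ leads to
\[
\left|\int_{\bt^d} W(h_2-h_1)\di x\right| \leq \int_0^1 \left(\int_{\bt^d}|\nabla W(y)|^2 \rho_s(y)\di y\right)^{1/2}\di s \; \cdot W_2(h_1,h_2),
\]
where $\rho_s := (T_s)_{\#}h_1$ is McCann's displacement interpolant.

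The key ingredient is then the $L^\infty$ bound
\[
\|\rho_s\|_{L^\infty(\bt^d)} \leq \max\bigl(\|h_1\|_{L^\infty},\|h_2\|_{L^\infty}\bigr),\qquad s\in[0,1],
\]
which follows from McCann's displacement convexity of the functional $\rho\mapsto \int F(\rho)\di x$ applied to $F(\rho)=\rho^p$ and letting $p\to\infty$ (on $\bt^d$ this is the version used in \cite{IHK2}, obtainable by working on the universal cover and using the concavity of the $d$-th root of the Jacobian of $T_s$). Inserting this bound gives
\[
\int_{\bt^d}|\nabla W(y)|^2 \rho_s(y)\di y \leq \max_i\|h_i\|_{L^\infty}\,\|\nabla W\|_{L^2(\bt^d)}^2,
\]
so that the integrand in $s$ is $s$-independent and the $s$-integral is trivial. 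Combining with the identity for $\|\nabla W\|_{L^2}^2$ yields
\[
\|\nabla W\|_{L^2(\bt^d)}^2 \leq \e^{-2}\bigl(\max_i\|h_i\|_{L^\infty}\bigr)^{1/2}\|\nabla W\|_{L^2(\bt^d)}\,W_2(h_1,h_2),
\]
and dividing and squaring gives the stated bound.

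The main obstacle is the displacement interpolation estimate on the flat torus: one must justify the existence of the optimal map for $L^\infty$ densities on $\bt^d$ and the $L^\infty$ bound on $\rho_s$, both of which require lifting to the universal cover and invoking the Jacobian inequality for Brenier-type maps. This is precisely the content of the toral Loeper-type estimate established in \cite{IHK2}, and once it is in hand the rescaling by $\e^{-2}$ enters in a purely algebraic way through the equation $\e^2\Delta W = h_1-h_2$.
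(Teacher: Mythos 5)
Your argument is correct and is precisely the Loeper displacement-interpolation proof that the paper relies on: the paper does not prove Lemma~\ref{lem:Loep} itself but cites \cite{Loep} and its toral adaptation in \cite{IHK2}, and your reconstruction (energy identity for the difference, optimal map with the $L^\infty$ bound on the displacement interpolant, Cauchy--Schwarz, cancellation of one factor of $\lVert \nabla W\rVert_{L^2}$, with the $\e^{-2}$ entering only through the rescaled equation) matches that cited argument, including the correct bookkeeping giving the $\e^{-4}$ factor after squaring.
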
 

For $\widehat U_\e$ we use a version of \cite[Lemma 3.7]{IGP-WP} in which we quantify the dependence of the constants on $\e$.
\begin{lem} \label{lem:hatU-stab}
For each $i=1,2$, let $\widehat U_\e^{(i)} \in  W^{1,2} \cap L^{\infty}(\bt^d)$ be a solution of
\begin{align} \label{PoiU}
\e^2\Delta \widehat U^{(i)}_\e&=e^{\bar U^{(i)}_\e+\widehat U^{(i)}_\e} - 1 ,
\end{align}
for some given potentials $\bar{U}^{(i)}_\e \in L^{\infty}(\bt^d)$ . Then
\begin{align} \label{H1dot}
\e^2 \lVert \nabla \widehat{U}^{(1)}_\e - \nabla \widehat{U}^{(2)}_\e \rVert^2_{L^2(\bt^d)} \leq C_\e \lVert \bar{U}^{(1)}_\e - \bar{U}^{(2)}_\e \rVert^2_{L^2(\bt^d)} ,
\end{align}
where $C$ depends on the $L^{\infty}$ norms of $\widehat{U}^{(i)}_\e$ and $\bar{U}^{(i)}_\e$. More precisely, $C_\e$ can be chosen such that
$$
C_\e \leq  \exp{\left [ C \left ( \max_i\,  \lVert \bar{U}^{(i)}_\e \rVert_{L^{\infty}(\bt^d)} + \max_i\, \lVert \widehat{U}^{(i)}_\e \rVert_{L^{\infty}(\bt^d)}  \right )\right ]} ,
$$
for some sufficiently large constant $C$, independent of $\e$.

\end{lem}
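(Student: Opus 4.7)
The plan is to subtract the two equations for $\widehat U_\e^{(i)}$ and test the difference against $\widehat U_\e^{(1)} - \widehat U_\e^{(2)}$, exploiting the monotonicity of the exponential. Writing $U_\e^{(i)} := \bar U_\e^{(i)} + \widehat U_\e^{(i)}$, the difference of \eqref{PoiU} reads
$$\e^2 \Delta\bigl(\widehat U_\e^{(1)} - \widehat U_\e^{(2)}\bigr) = e^{U_\e^{(1)}} - e^{U_\e^{(2)}}.$$
Multiplying by $\widehat U_\e^{(1)} - \widehat U_\e^{(2)} \in W^{1,2}\cap L^\infty(\bt^d)$ and integrating by parts on the torus yields
$$\e^2 \bigl\lVert \nabla\bigl(\widehat U_\e^{(1)} - \widehat U_\e^{(2)}\bigr) \bigr\rVert_{L^2}^2 = -\int_{\bt^d} \bigl(e^{U_\e^{(1)}} - e^{U_\e^{(2)}}\bigr) \bigl(\widehat U_\e^{(1)} - \widehat U_\e^{(2)}\bigr)\, dx.$$
A truncation argument of the type used in the proof of Lemma \ref{lem:Uhat} will be needed to justify this identity rigorously.

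Next I would use the pointwise representation $e^a - e^b = M(a,b)(a-b)$ with $M(a,b) := \int_0^1 e^{t a + (1-t) b}\, dt$, setting $M(x) := M(U_\e^{(1)}(x), U_\e^{(2)}(x)) \geq 0$, and decompose the right-hand side via $U_\e^{(1)} - U_\e^{(2)} = \bigl(\bar U_\e^{(1)} - \bar U_\e^{(2)}\bigr) + \bigl(\widehat U_\e^{(1)} - \widehat U_\e^{(2)}\bigr)$. The diagonal term contributes a nonnegative $\int M\,\bigl(\widehat U_\e^{(1)} - \widehat U_\e^{(2)}\bigr)^2\, dx$ on the left, while the cross term is handled by Cauchy--Schwarz and Young's inequality so as to absorb half of the diagonal term. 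This produces
$$\e^2 \bigl\lVert \nabla\bigl(\widehat U_\e^{(1)} - \widehat U_\e^{(2)}\bigr) \bigr\rVert_{L^2}^2 + \tfrac{1}{2} \int_{\bt^d} M \bigl(\widehat U_\e^{(1)} - \widehat U_\e^{(2)}\bigr)^2 dx \leq \tfrac{1}{2} \int_{\bt^d} M \bigl(\bar U_\e^{(1)} - \bar U_\e^{(2)}\bigr)^2 dx.$$
Dropping the nonnegative second term on the left and using the pointwise bound $\lVert M \rVert_{L^\infty} \leq \exp\bigl(\max_i \lVert \bar U_\e^{(i)} \rVert_{L^\infty} + \max_i \lVert \widehat U_\e^{(i)} \rVert_{L^\infty}\bigr)$ yields \eqref{H1dot} with $C_\e$ of the advertised form.

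The key structural observation is that the monotonicity of $U \mapsto e^U$ produces a favourable sign: testing against $\widehat U_\e^{(1)} - \widehat U_\e^{(2)}$ (rather than an arbitrary test function) is precisely what lets the diagonal quadratic term be discarded. I expect the main nuisance to be the truncation/approximation step justifying the integration by parts on $\bt^d$ when $\widehat U_\e^{(i)}$ is only in $W^{1,2}\cap L^\infty$, essentially repeating the device used in Lemma \ref{lem:Uhat}. A secondary but important bookkeeping point is to check that no factor of $\e^{-2}$ (or any other $\e$-dependent quantity) is hidden inside the exponential defining $C_\e$; this clean $\e$-independent form of the exponent is precisely what makes the lemma usable when combined with Lemma \ref{lem:Loep} to deduce the quantitative stability estimate \eqref{stab-Uhat} of Proposition \ref{prop:Ustab}.
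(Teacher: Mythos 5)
Your proposal is correct and follows essentially the same route as the paper: subtract the two equations, test with $\widehat U^{(1)}_\e - \widehat U^{(2)}_\e$ (which is admissible directly since it lies in $W^{1,2}\cap L^{\infty}(\bt^d)$, so no truncation is in fact needed at this step), and exploit the monotonicity of the exponential to retain only the favourable quadratic term. The only difference is that the paper invokes the inequality $-\int_{\bt^d}\bigl(e^{\bar U+\widehat U}-e^{\bar V+\widehat V}\bigr)(\widehat U-\widehat V)\,\di x \leq C_{U,V}\lVert \bar U-\bar V\rVert^2_{L^2(\bt^d)}$ from the proof of \cite[Lemma 3.7]{IGP-WP}, whereas you derive it directly via the mean-value factor $M$ and Young's inequality, which is precisely the argument behind that citation.
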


\begin{proof}
We look at the equation solved by the difference $\widehat U^{(1)}_\e - \widehat U^{(2)}_\e$. By subtracting the two equations in \eqref{lem:hatU-stab}, we find that
\be \label{eq:hatU-diff}
\e^2 \Delta (\widehat U^{(1)}_\e - \widehat U^{(2)}_\e) = e^{\bar U^{(1)}_\e+\widehat U^{(1)}_\e} -e^{\bar U^{(2)}_\e+\widehat U^{(2)}_\e} .
\ee
By assumption, $\widehat U_\e^{(i)} \in  W^{1,2} \cap L^{\infty}(\bt^d)$. Hence $\widehat U^{(1)}_\e - \widehat U^{(2)}_\e$ may be used as a test function in the weak form of \eqref{eq:hatU-diff}. We multiply both sides by $\widehat U^{(1)}_\e - \widehat U^{(2)}_\e$, integrate over $\bt^d$, and integrate by parts to find
\be
- \e^2 \int_{\bt^d} \lvert \nabla \widehat{U}^{(1)}_\e - \nabla \widehat{U}^{(2)}_\e \rvert^2 \di x = \int_{\bt^d } \left ( e^{\bar{U}^{(1)}_\e + \widehat{U}^{(1)}_\e} - e^{\bar{U}^{(2)}_\e + \widehat{U}^{(2)}_\e} \right ) ( \widehat{U}^{(1)}_\e - \widehat{U}^{(2)}_\e )  \di x .
\ee

In the proof of \cite[Lemma 3.7]{IGP-WP}, we showed that for any $\bar U, \widehat U, \bar V, \widehat V \in L^\infty(\bt^d)$,
\be
-  \int_{\bt^d } \left ( e^{\bar{U} + \widehat{U}} - e^{\bar{V} + \widehat{V}} \right ) ( \widehat{U} - \widehat{V})  \di x \leq C_{U,V} \| \bar U - \bar V \|^2_{L^2(\bt^d)} ,
\ee
where
\be
C_{U,V} \leq \exp{ \left \{ C \left [ \|\bar U\|_{L^\infty(\bt^d)} + \|\widehat U\|_{L^\infty(\bt^d)} + \|\bar V\|_{L^\infty(\bt^d)} + \|\widehat V\|_{L^\infty(\bt^d)}\right ] \right \}} .
\ee
Thus
\be
\| \nabla \widehat{U}^{(1)}_\e - \nabla \widehat{U}^{(2)}_\e \|^2_{L^2(\bt^d)} \leq C_\e \, \e^{-2} \| \bar U^{(1)}_\e - \bar U^{(2)}_\e \|^2_{L^2(\bt^d)} ,
\ee
where
\be
C_\e \leq  \exp{\left [ C \left ( \max_i\,  \lVert \bar{U}^{(i)}_\e \rVert_{L^{\infty}(\bt^d)} + \max_i\, \| \widehat{U}^{(i)}_\e \|_{L^{\infty}(\bt^d)}  \right )\right ]} .
\ee
\end{proof}

\begin{proof}[Proof of Proposition~\ref{prop:Ustab}]
It suffices to prove \eqref{stab-Uhat}, since \eqref{stab-Ubar} follows immediately from Lemma~\ref{lem:Loep}.
By the Poincar\'{e} inequality for zero-mean functions,
\be
\| \bar U^{(1)}_\e - \bar U^{(2)}_\e \|^2_{L^2(\bt^d)} \leq C \| \nabla \bar U^{(1)}_\e - \nabla \bar U^{(2)}_\e \|^2_{L^2(\bt^d)} .
\ee
By Lemma~\ref{lem:Loep},
\be
\| \nabla \bar U_\e^{(1)} - \nabla \bar U_\e^{(2)} \|^2_{L^2(\bt^d)} \leq \e^{-4} \max_i\, \| h_i \|_{L^{\infty}(\bt^d)} \, W^2_2(h_1, h_2) .
\ee
Then, by Lemma~\ref{lem:hatU-stab},
\be
\lVert \nabla \widehat{U}^{(1)}_\e - \nabla \widehat{U}^{(2)}_\e \rVert^2_{L^2(\bt^d)} \leq C_\e \, \e^{-6}   \max_i\, \| h_i \|_{L^{\infty}(\bt^d)} \, W^2_2(h_1, h_2)  ,
\ee
for
\be
C_\e \leq  \exp{\left [ C \left ( \max_i\,  \lVert \bar{U}^{(i)}_\e \rVert_{L^{\infty}(\bt^d)} + \max_i\, \lVert \widehat{U}^{(i)}_\e \rVert_{L^{\infty}(\bt^d)}  \right )\right ]} .
\ee

For the $L^\infty(\bt^d)$ estimates, we use Proposition~\ref{prop:regU}:
\begin{align}
\| \bar U^{(i)}_\e \|_{L^\infty(\bt^d)} & \leq C_d \, \e^{-2} \left ( 1 + \| h_i \|_{L^{\frac{d+2}{d}}} \right ) \\
\| \widehat U^{(i)}_\e \|_{L^\infty(\bt^d)} & \leq C_d \exp {\left [ C_d \e^{-2} \left ( 1 + \| h_i \|_{L^{\frac{d+2}{d}}} \right ) \right ]} .
\end{align}
Hence
\be
C_\e \leq  \mathbf{ \overline{\exp}_2 }{\left [ C_d \e^{-2} \left ( 1 + \max_i \| h_i \|_{L^{\frac{d+2}{d}}} \right ) \right ]},
\ee
which implies \eqref{stab-Uhat}.

\end{proof}

\section{Strong-strong stability}
\label{sec:stability}

To prove the quasineutral limit, we will need a quantified stability estimate between solutions of the VPME system \eqref{vpme}. The following proposition is an $\e$-dependent version of \cite[Proposition 4.1]{IGP-WP}. We revisit the proof here, keeping track of how all the constants depend on $\e$.

\begin{prop}[Stability for solutions with bounded density] \label{prop:Wstab}
For $i = 1,2$, let $f_\e^{(i)}$ be solutions of \eqref{vpme-quasi} satisfying for some constant $M$ and all $t \in [0,T]$,
\be \label{str-str_rho-hyp}
\rho[f_\e^{(i)}(t)] \leq M .
\ee
Then there exists a constant $C_\e$ such that, for all $t \in [0,T]$,
\be
W_2\(f_\e^{(1)}(t), f_\e^{(2)}(t)\) \leq \begin{cases}
C \exp{\biggl [ C\Bigl(1 + \log{\frac{W_2\(f_\e^{(1)}(0), \, f_\e^{(2)}(0)\)}{4 \sqrt{d}}}\Big) e^{-C_\e t} \biggr ]} & \text{ if } W_2\(f_\e^{(1)}(0), f_\e^{(2)}(0)\) \leq d \\
W_2\(f_\e^{(1)}(0), f_\e^{(2)}(0)\) e^{C_\e t} & \text{ if } W_2\(f_\e^{(1)}(0), f_\e^{(2)}(0)\) > d .
\end{cases}
\ee
If in addition, for some constant $C_0$,
\begin{equation}
\label{eq:hypothesis f}
\sup_{t \in [0,T]} \lVert f_\e^{(i)}(t, \cdot, \cdot) \rVert_{L^{\infty}_{x,v}} \leq C_0,\qquad
\sup_{t \in [0,T]} \mc{E}_\e[f_\e^{(i)}](t)\leq C_0,
\end{equation}
then $C_\e$ may be chosen to satisfy
$$
C_\e \leq  \,\mathbf{\overline{\exp}_2}{(C \e^{-2})} (M + 1) .
$$
\end{prop}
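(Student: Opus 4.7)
The plan is to adapt the Lagrangian coupling argument from \cite{IGP-WP} to the quasineutral-scaled equation, paying careful attention to how each of the constants produced by the electric field estimates of the previous section depends on $\e$. First I would fix an optimal coupling $\pi_0 \in \Pi(f_\e^{(1)}(0), f_\e^{(2)}(0))$ for $W_2$, and denote by $(X_i(t;x,v), V_i(t;x,v))$ the characteristic flow of the vector field $(v, E_\e^{(i)})$ driving $f_\e^{(i)}$. The anisotropic quantity to control is
\[
Q^2(t) := \int_{(\bt^d\times\br^d)^2} \Bigl(|X_1(t;x,v) - X_2(t;x',v')|^2 + |V_1(t;x,v) - V_2(t;x',v')|^2\Bigr)\di\pi_0(x,v,x',v').
\]
Since the joint law of $((X_1,V_1),(X_2,V_2))$ couples $f_\e^{(1)}(t)$ and $f_\e^{(2)}(t)$, one has $W_2^2(f_\e^{(1)}(t), f_\e^{(2)}(t)) \leq Q^2(t)$.

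Differentiating and applying Cauchy--Schwarz to the cross term $(X_1-X_2)\cdot(V_1-V_2)$ gives
\[
\dot Q^2 \leq Q^2 + 2\int (V_1-V_2)\cdot\bigl(E_\e^{(1)}(X_1) - E_\e^{(2)}(X_2)\bigr)\di\pi_0.
\]
The strategy is to split $E_\e^{(i)} = \bar E_\e^{(i)} + \widehat E_\e^{(i)}$ and further decompose
\[
E_\e^{(1)}(X_1) - E_\e^{(2)}(X_2) = \underbrace{\bigl[E_\e^{(1)}(X_1) - E_\e^{(1)}(X_2)\bigr]}_{\text{same field, shifted point}} + \underbrace{\bigl[E_\e^{(1)}(X_2) - E_\e^{(2)}(X_2)\bigr]}_{\text{different fields, same point}}.
\]
For the first piece I would use, for $\bar E_\e^{(1)}$, the log-Lipschitz estimate of Lemma~\ref{lem:logLip} with constant $\leq C\e^{-2}M$, and, for $\widehat E_\e^{(1)}$, the Lipschitz bound from the $C^{2,\alpha}$ estimate of Proposition~\ref{prop:regU} (invoking Lemma~\ref{lem:rho-Lp} under hypothesis~\eqref{eq:hypothesis f} to pass from the energy to an $L^{(d+2)/d}$ bound on $\rho_\e^{(i)}$), which gives a Lipschitz constant of order $\mathbf{\overline{\exp}_2}(C\e^{-2})$. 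For the second piece, the pushforward identity combined with the $L^\infty$ bound $\|\rho_\e^{(2)}(t)\|_{L^\infty}\leq M$ and Proposition~\ref{prop:Ustab} yields
\[
\int\bigl|E_\e^{(1)}(X_2) - E_\e^{(2)}(X_2)\bigr|^2\di\pi_0 \leq M\bigl(\|\bar E_\e^{(1)}-\bar E_\e^{(2)}\|_{L^2}^2 + \|\widehat E_\e^{(1)}-\widehat E_\e^{(2)}\|_{L^2}^2\bigr) \leq \mathbf{\overline{\exp}_2}(C\e^{-2})(M+1)\,Q^2,
\]
where I use that $W_2(\rho_\e^{(1)},\rho_\e^{(2)}) \leq W_2(f_\e^{(1)},f_\e^{(2)}) \leq Q$ since marginalisation is a contraction for $W_2$.

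Applying Cauchy--Schwarz in $\pi_0$ to the cross term $(V_1-V_2)\cdot[\bar E^{(1)}(X_1) - \bar E^{(1)}(X_2)]$ together with Jensen's inequality for the concave map $r\mapsto r\bigl(1+\tfrac12\log_+(d/r)\bigr)^2$ converts the log-Lipschitz bound into a contribution $\lesssim \e^{-2}M\,Q^2(1 + \log_+(\sqrt{d}/Q))$. Collecting all four contributions produces the Osgood-type inequality
\[
\dot Q^2 \leq C_\e Q^2 \Bigl(1 + \log_+ \tfrac{\sqrt d}{Q}\Bigr),\qquad C_\e \leq \mathbf{\overline{\exp}_2}(C\e^{-2})(M+1),
\]
which integrates, via the substitution $w = -\log(Q^2/d)$, to the double-exponential bound stated in the proposition when $Q(0)\leq \sqrt{d}$ (with the log factor absent the Osgood inequality degenerates to a linear one, giving the purely exponential bound in the complementary range). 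The main technical obstacle I anticipate is bookkeeping: each constituent estimate produces a different tower of exponentials in $\e^{-2}$, and one must verify that the worst among them---arising from the $L^2$-stability of $\widehat U_\e$ in Proposition~\ref{prop:Ustab}---dominates and fits under a single $\mathbf{\overline{\exp}_2}(C\e^{-2})$; some care is also needed in the Jensen step to ensure concavity holds uniformly on the range $Q^2\in(0,d)$ so that the log-Lipschitz contribution does not degrade the final rate.
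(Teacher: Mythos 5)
Your proposal follows essentially the same route as the paper's proof: a coupling transported along the two characteristic flows, the same four-way splitting of $E_\e^{(1)}(X_1)-E_\e^{(2)}(X_2)$ into ``same field, shifted point'' and ``different fields, same point'' pieces for both $\bar E_\e$ and $\widehat E_\e$, handled respectively by the log-Lipschitz estimate with Jensen's inequality for a concave majorant, the Loeper estimate, the $C^{2}$ bound on $\widehat U_\e$, and the $\e$-quantified stability of $\widehat U_\e$, followed by the same Osgood-type integration yielding the double-exponential bound. The argument and the bookkeeping of the $\mathbf{\overline{\exp}_2}(C\e^{-2})(M+1)$ constant match the paper's proof, so no changes are needed.
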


\begin{proof}

The $W_2$ distance is defined as an infimum over couplings \eqref{def:Wass}. Thus it suffices to estimate the $L^2$ distance corresponding to a particular coupling of $f_\e^{(1)}$ and $f_\e^{(2)}$. To do this we will first represent $f_\e^{(i)}$ as the pushforward of $f_\e^{(i)}(0)$ along the characteristic flow induced by $f_\e^{(i)}$. That is, consider the following system of ODEs for $Z^{(i)}_z = \left ( X^{(i)}_z, V^{(i)}_z \right ) \in \bt^d \times \br^d$:
\begin{align} \label{char-Zi}
& \begin{cases}
\dot X^{(i)}_z = V^{(i)}_z, \\
\dot V^{(i)}_z = E^{(i)}_\e \bigl( X^{(i)}_z \bigr), \\
 \bigl( X^{(i)}_z(0), V^{(i)}_z(0) \bigr) = (x,v) = z,
\end{cases}
\end{align}
where the electric field $E^{(i)}_\e$ is given by
\begin{align}
E^{(i)}_\e &= - \nabla U^{(i)}_\e, \\
\e^2 \Delta U^{(i)}_\e &= e^{U^{(i)}_\e} - \rho^{(i)}_\e := e^{U^{(i)}_\e} - \rho[f^{(i)}_\e] .
\end{align}
Since $f^{(i)}_\e$ has bounded mass density, Proposition~\ref{prop:regU} and Lemma~\ref{lem:logLip} imply that $E^{(i)}_\e$ is a log-Lipschitz vector field, with a constant uniform on $[0,T]$. This regularity is enough to imply that there exists a unique solution of \eqref{char-Zi} for every initial condition $z$, resulting in a well-defined characteristic flow. Since the characteristic flow is unique, by \cite[Theorem 3.1]{Amb} the linear Vlasov equation
\be \label{vlasov-lin} \left \{
\begin{array}{r}
\partial_t g + v \cdot \nabla_x g + E_\e^{(i)} (x) \cdot \nabla_v g = 0, \\
g \vert_{t=0} = f^{(i)}_\e(0) \geq 0
\end{array} \right.
\ee
has a unique solution. Moreover, this solution can be represented, in weak form, by the relation
\be \label{rep-pshfwd}
\int_{\bt^d \times \br^d} \phi \, g_t(\di z) = \int_{\bt^d \times \br^d} \phi \left ( Z^{(i)}_z \right )  f^{(i)}_\e(0)(\di z) ,
\ee
for all $\phi \in C_b(\bt^d \times \br^d)$. Since $f^{(i)}_\e$ is certainly a solution of \eqref{vlasov-lin}, we deduce that $g=f^{(i)}_\e$ and so $f^{(i)}_\e$ has the representation \eqref{rep-pshfwd}. We will use this representation to define a coupling between $f^{(1)}_\e$ and $f^{(2)}_\e$.

Fix an arbitrary initial coupling $\pi_0 \in \Pi \left [ f^{(1)}_\e(0), f^{(2)}_\e(0)\right ]$. We define $\pi_t$ to follow the corresponding characteristic flows: for $\phi \in C_b((\bt^d \times \br^d)^2)$, let
\be \label{def:pit0}
\int_{(\bt^d \times \br^d)^2} \phi(z_1, z_2) \di \pi_t(z_1, z_2) = \int_{\bt^d \times \br^d} \phi \left (Z^{(1)}_{z}, Z^{(2)}_{z} \right ) \di \pi_0(z_1, z_2) .
\ee
We can very that $\pi_t$ is indeed a coupling of $f^{(1)}_\e$ and $f^{(2)}_\e$ by checking the marginals:
\begin{align} \label{pit-marg}
\int_{(\bt^d \times \br^d)^2} \phi(z_{i}) \di \pi_t(z_1, z_2) & = \int_{(\bt^d \times \br^d)^2} \phi \left (Z^{(i)}_{z_i} (t) \right ) \di \pi_0(z_1, z_2) \\
& = \int_{\bt^d \times \br^d} \phi \left (Z^{(i)}_{z} (t) \right )  f^{(i)}_\e(0)( \di z ) \\
& = \int_{\bt^d \times \br^d} \phi(z) f^{(i)}_\e(t)( \di z ) .
\end{align}

Next, we define a functional which is greater than (or equal to) the squared Wasserstein distance between $f^{(1)}_\e$ and $f^{(2)}_\e$. Let
\be \label{def:D}
D(t) = \int_{(\bt^d \times \br^d)^2} |x_1-x_2|^2 + |v_1-v_2|^2 \di \pi_t(x_1,v_1,x_2,v_2) .
\ee
Since the Wasserstein distance is an infimum over couplings, while $\pi_t$ is a particular coupling, $D$ must control the squared Wasserstein distance:
\be
W_2^2(f^{(1)}_\e, f^{(2)}_\e) \leq D .
\ee
To prove our stability estimate, it therefore suffices to estimate $D$. We will do this using a Gr\"{o}nwall estimate. Taking a time derivative, we find
\begin{multline}
\dot D = 2 \int_{(\bt^d \times \br^d)^2} \left ( X^{(1)}_{z_1} (t) - X^{(2)}_{z_2} (t) \right ) \cdot \left ( V^{(1)}_{z_1} (t) - V^{(2)}_{z_2} (t) \right )\\
 + \left ( V^{(1)}_{z_1} (t) - V^{(2)}_{z_2} (t) \right ) \cdot \left ( E^{(1)}_\e (X^{(1)}_{z_1} (t)) - E^{(2)}_\e (X^{(2)}_{z_2} (t) ) \right ) \di \pi_0(z_1, z_2) .
\end{multline}
Using the Cauchy-Schwarz inequality, we obtain
\be
\dot D \leq D + \sqrt{D} \left ( \int_{(\bt^d \times \br^d)^2} \left | E^{(1)}_\e (X^{(1)}_{z_1} (t)) - E^{(2)}_\e (X^{(2)}_{z_2} (t) ) \right |^2 \di \pi_0(z_1, z_2) \right )^{1/2} .
\ee
In other words,
\be
\dot D \leq D + \sqrt{D} \left ( \int_{(\bt^d \times \br^d)^2} \left | E^{(1)}_\e (x_1) - E^{(2)}_\e (x_2 ) \right |^2 \di \pi_t(z_1, z_2) \right )^{1/2} .
\ee
We split the electric field term into the form
\be
\int_{(\bt^d \times \br^d)^2} \left | E^{(1)}_\e (x_1) - E^{(2)}_\e (x_2 ) \right |^2  \di \pi_t(z_1, z_2) \leq C \sum_{i=1}^4 I_i ,
\ee
where
\be \label{def:Ii-MF0}
\begin{split}
I_1 & := \int_{(\bt^d \times \br^d)^2} |\bar E_\e^{(1)}(x_1) - \bar E^{(1)}_\e(x_2)|^2 \di \pi_t , \qquad I_2  := \int_{(\bt^d \times \br^d)^2} |\bar E_\e^{(1)}(x_2) - \bar E^{(2)}_\e(x_2)|^2 \di \pi_t, \\
I_3 & := \int_{(\bt^d \times \br^d)^2} |\widehat E_\e^{(1)}(x_1) - \widehat E^{(1)}_\e(x_2)|^2 \di \pi_t,
\qquad
I_4  := \int_{(\bt^d \times \br^d)^2} |\widehat E_\e^{(1)}(x_2) - \widehat E^{(2)}_\e(x_2)|^2 \di \pi_t .
\end{split}
\ee

\begin{paragraph}{Control of $I_1$:}
To estimate $I_1$, observe that by Lemma~\ref{lem:logLip},
\be
I_1 \leq C \e^{-4} \lVert \rho_\e^{(1)} \rVert^2_{L^{\infty}(\bt^d)} \int_{(\bt^d \times \br^d)^2}  |x_1-x_2|^2  \left ( 1 + \log{\left ( \frac{\sqrt{d}}{|x_1-x_2|} \right )} \mathbbm{1}_{|x_1-x_2|\leq \sqrt{d}} \right )^2  \di \pi_t .
\ee
We introduce the function
$$
H(x) := \begin{cases}
x \left (\log{\frac{x}{16d}} \right )^2 \text{ if } x \leq d \\
d \left (\log{16} \right )^2 \text{ if } x > d,
\end{cases}
$$
which is concave on $\br_+$ (see \cite[Lemma 4.4]{IGP-WP}). Then
\be
I_1 \leq C \e^{-4} \lVert \rho_\e^{(1)} \rVert^2_{L^{\infty}(\bt^d)}  \int_{(\bt^d \times \br^d)^2} H \left ( |x_1 - x_2|^2 \right ) \di \pi_t  .
\ee

By Jensen's inequality,
\begin{align}
I_1  \leq C \e^{-4} \lVert \rho_\e^{(1)} \rVert^2_{L^{\infty}(\bt^d)} \, H \left ( \int_{(\bt^d \times \br^d)^2} |x_1 - x_2|^2 \di \pi_t \right )  \leq C \e^{-4} M^2 H(D) .
\end{align}

\end{paragraph}

\begin{paragraph}{Control of $I_2$:}
For $I_2$, observe that
\begin{align}
 I_2  &\leq \int_{\bt^d} |\bar E_\e^{(1)}(x) - \bar E^{(2)}_\e(x)|^2 \rho_\e^{(2)}(\di x) \\
 & \leq \lVert \rho_\e^{(2)} \rVert_{L^{\infty}(\bt^d)} \| \bar E_\e^{(1)} - \bar E^{(2)}_\e \|^2_{L^2(\bt^d)} .
\end{align}
We apply the Loeper stability estimate (Lemma~\ref{lem:Loep}) to obtain
\begin{align}
I_2 \leq \e^{-4} \max_i\, \lVert \rho_\e^{(i)} \rVert^2_{L^{\infty}(\bt^d)} \, W^2_2(\rho_\e^{(1)}, \rho_\e^{(2)}) \leq C \e^{-4} M^2 D .
\end{align}
\end{paragraph}

\begin{paragraph}{Control of $I_3$:}
To estimate $I_3$, we recall a regularity estimate on $\widehat U_\e^{(1)}$ from Proposition~\ref{prop:regU}:
\be
\| \widehat E^{(1)}_\e \|_{C^1(\bt^d)} \leq \| \widehat U^{(1)}_\e \|_{C^2(\bt^d)} \leq C_{d} \,\mathbf{\overline{\exp}_2} {\Bigl(C_{d} \e^{-2}  \Bigl(1 +  \lVert \rho_\e^{(1)} \rVert_{L^{\frac{d+2}{d}}(\bt^d)} \Bigr) \Bigr)} .
\ee
Under conditon \eqref{eq:hypothesis f}, by Lemma~\ref{lem:rho-Lp},
\be
\lVert \rho_\e^{(1)} \rVert_{L^{\frac{d+2}{d}}(\bt^d)} \leq C
\ee
for some $C$ depending on $C_0$ only. Therefore
\begin{align}
I_3  \leq \|\widehat E_\e^{(1)}\|^2_{C^1(\bt^d)} \int_{(\bt^d \times \br^d)^2} |x_1 - x_2|^2 \di \pi_t  \leq \mathbf{\overline{\exp}_2}{(C \e^{-2})} D ,
\end{align}
for some $C$ depending on $C_0$ and $d$ only. If \eqref{eq:hypothesis f} does not hold, we can use the fact that
\be
\lVert \rho_\e^{(1)} \rVert_{L^{\frac{d+2}{d}}(\bt^d)} \leq \lVert \rho_\e^{(1)} \rVert_{L^{\infty}(\bt^d)} \leq M
\ee
and complete the proof in the same way, to find a constant depending on $M$.
\end{paragraph}

\begin{paragraph}{Control of $I_4$:}
First, note that
\begin{align}
I_4   = \int_{\bt^d} |\widehat E_\e^{(1)}(x) - \widehat E^{(2)}_\e(x)|^2 \rho_\e^{(2)}(\di x) \leq \|\rho_\e^{(2)}\|_{L^\infty(\bt^d)} \|\widehat E_\e^{(1)} - \widehat E^{(2)}_\e\|_{L^2(\bt^d)}^2 .
\end{align}
We apply the stability estimate on $\widehat U_\e^{(1)}$ from Proposition~\ref{prop:Ustab} to find
\begin{align}
\lVert \widehat E_\e^{(1)} -  \widehat E_\e^{(2)} \rVert^2_{L^2(\bt^d)} &\leq \mathbf{\overline{\exp}_2} {\left [ C_d \e^{-2} \left ( 1 + \max_i\, \lVert \rho_\e^{(i)} \rVert_{L^{(d+2)/d}(\bt^d)}  \right ) \right ]}\\
&\times \max_i\, \lVert  \rho_\e^{(i)} \rVert_{L^{\infty}(\bt^d)} \, W^2_2( \rho_\e^{(1)},  \rho_\e^{(2)}).
\end{align}
Once again, if conditon \eqref{eq:hypothesis f} holds then
\be
\lVert \widehat E_\e^{(1)} -  \widehat E_\e^{(2)} \rVert^2_{L^2(\bt^d)} \leq \mathbf{\overline{\exp}_2} {\left ( C \e^{-2}\right )} \max_i\, \lVert  \rho_\e^{(i)} \rVert_{L^{\infty}(\bt^d)} \, W^2_2( \rho_\e^{(1)},  \rho_\e^{(2)}),
\ee
for some $C$ depending on $C_0$ and $d$ only. Thus
\be
I_4 \leq  \mathbf{\overline{\exp}_2} {\left ( C \e^{-2}\right )} M^2 D.
\ee

\end{paragraph}

Altogether we find that
\begin{align}
\dot D \leq \begin{cases} C_\e D \left (1 + \lvert \log{\frac{D}{16d}} \rvert \right ) & \text{ if } D < d \\
C_\e (1 + \log{16}) D & \text{ if } D \geq d .
\end{cases}
\end{align}
If \eqref{eq:hypothesis f} holds, then $C_\e$ may be chosen to satisfy
\begin{align}
C_\e &\leq C \e^{-2} M +  \mathbf{\overline{\exp}_2}{(C \e^{-2})} + \mathbf{\overline{\exp}_2}{(C \e^{-2})} M \\
&\leq  \,\mathbf{\overline{\exp}_2}{(C \e^{-2})} (M + 1) .
\end{align}

We conclude that
\be
D(t) \leq C \exp{\left [ \left ( 1 + \log{\frac{D(0)}{16d}} \right ) e^{- C_\e t} \right ] } 
\ee
as long as $D \leq d$; if $D > d$ then
\be
D(t) \leq (d \vee D(0)) e^{C_\e(1 + \log{16})t} .
\ee

\end{proof}

\section{Growth estimates} \label{sec:growth}
In this section we will study how the support of a solution of the VPME system grows in time. This argument is very much dimension dependent and we shall present two different proofs for the two and three dimensional case, respectively in Sections \ref{sec:growth2d} and \ref{sec:growth3d}.
The result we obtain is the following:

\begin{prop}[Mass bounds] \label{prop:growth}
Let $f_\e$ be a solution of \eqref{vpme-quasi} satisfying for some constant $C_0$,
\be
\lVert f_\e \rVert_{L^{\infty}([0,T] \times \bt^d \times \br^d)} \leq C_0,\qquad
\sup_{t \in [0,T]} \mc{E}_\e[f_\e(t)] \leq C_0 .
\ee
Assume that $\rho_{f_\e} \in L^\infty([0,T] ; L^\infty(\bt^d))$. Let $R_0$ be any constant such that the support of $f_\e(0, \cdot, \cdot)$ is contained in $\bt^d \times B_{\br^d}(0,R_0)$.

\begin{enumerate}[(i)]
\item If $d=2$, then:
$$
\sup_{t \in [0,T]} \lVert \rho_{f_\e}(t) \rVert_{L^{\infty}(\bt^2)} \leq C_T\, e^{C \e^{-2}} \left [ R_0 + \e^{-2}\right ]^2 .
$$
The constant $C$ depends on $C_0$ only, while $C_T$ depends on $C_0$ and $T$.
\item If $d=3$, then:
$$
\sup_{t \in [0,T]} \lVert \rho_\e(t) \rVert_{L^{\infty}(\bt^3)} \leq \max \{ T^{-81/8}, C(R_0^3 + e^{C \e^{-2}} T^6) \} .
$$
The constant $C$ depends on $C_0$ only.
\end{enumerate}
\end{prop}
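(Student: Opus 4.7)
The plan is to track the velocity-support radius
$R(t) := \inf\{R > 0 : \mathrm{supp}\,f_\e(t,\cdot,\cdot) \subset \bt^d \times B(0,R)\}$.
Since $\|f_\e\|_{L^\infty} \leq C_0$ and the velocity support lies in $B(0,R(t))$, one has the standard bound $\|\rho_{f_\e}(t)\|_{L^\infty(\bt^d)} \leq C_0 R(t)^d$. Along the characteristic flow one has $\dot V = E_\e(X)$, hence
\be
R(t) \leq R_0 + \int_0^t \|E_\e(s,\cdot)\|_{L^\infty(\bt^d)}\, ds.
\ee
I split $E_\e = \bar E_\e + \widehat E_\e$ as in Section~\ref{sec:electric}. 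By Lemma~\ref{lem:rho-Lp} the energy bound controls $\|\rho_\e\|_{L^{(d+2)/d}}$ uniformly in $\e$, so Proposition~\ref{prop:regU} yields
\be
\sup_{t \in [0,T]}\|\widehat E_\e(t,\cdot)\|_{L^\infty(\bt^d)} \leq \exp(C\e^{-2}).
\ee
Hence the whole problem reduces to controlling $\|\bar E_\e\|_{L^\infty}$ in terms of $R(t)$.

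For $d=2$ I would proceed by a direct convolution splitting. Since $|\nabla G(z)| \leq C(|z|^{-1}+1)$ on $\bt^2$ and $\bar E_\e = -\e^{-2}\nabla G \ast (\rho_\e - 1)$, cutting the integral at radius $r$ and using $\|\rho_\e\|_{L^1} = 1$ gives
\be
\|\bar E_\e\|_{L^\infty} \leq C\e^{-2}\bigl(r\|\rho_\e\|_{L^\infty} + r^{-1} + 1\bigr).
\ee
Optimising $r = \|\rho_\e\|_{L^\infty}^{-1/2}$ yields $\|\bar E_\e\|_{L^\infty} \leq C\e^{-2}\bigl(1 + R(t)\bigr)$. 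Combining with the bound on $\widehat E_\e$ one obtains the linear Gr\"{o}nwall inequality $\dot R \leq C\e^{-2}R + e^{C\e^{-2}}$, whose integration gives $R(T) \leq \exp(C_T \e^{-2})(R_0 + \e^{-2})$ after routine algebra; squaring produces the stated bound on $\|\rho_\e\|_{L^\infty}$.

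For $d=3$ the singular kernel $|K(z)| \sim |z|^{-2}$ is too strong for this splitting: it would give $\|\bar E_\e\|_{L^\infty} \lesssim R^2$ and $\dot R \leq R^2$, which blows up in finite time. I would instead adapt a Pfaffelmoser/Schaeffer-type trajectory argument: represent $\rho_\e(t,x)$ via the pushforward along the characteristic flow, and for fixed $(t,x)$ decompose the contribution to $\bar E_\e(t,x)$ by stratifying backward characteristics according to a lookback time $\tau \in (0,t)$. Short backward times give `close' contributions with a strong singularity but small volume (controlled via straightening of characteristics and $\|f_\e\|_{L^\infty} \leq C_0$), while longer backward times give `distant' ones that are already regularised by free-transport spreading in velocity and are therefore controlled by $\|\rho_\e\|_{L^1}=1$ together with the energy. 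Choosing the split parameter as an appropriate power of $R(t)$ and inserting the uniform bound on $\widehat E_\e$, the resulting differential inequality on $R$ splits into two regimes: a short-time smoothing regime giving the $T^{-81/8}$ branch, and a long-time branch $C(R_0^3 + e^{C\e^{-2}}T^6)$ driven by the regular $\widehat E_\e$ contribution and the initial velocity support.

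The main obstacle is the three-dimensional estimate: the convolution splitting used in $d=2$ fails, and the Pfaffelmoser-type trajectory analysis must be carried out while keeping the $\e$-dependence of every constant under control. In particular, the nonlinear $\widehat E_\e$ term, which is already expensive in $\e$ via the exponential in Proposition~\ref{prop:regU}, must enter additively rather than multiplicatively into the iteration, so that the final answer contains only the factor $e^{C\e^{-2}}$ and not an iterated exponential. This interplay between the singular-kernel trajectory estimate and the nonlinear Poisson coupling is what forces the two dimensions to be handled in separate subsections.
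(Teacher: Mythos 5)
Your $d=2$ argument has a concrete gap: the $L^1$/$L^\infty$ splitting of the convolution gives, after optimising $r=\|\rho_\e\|_{L^\infty}^{-1/2}$, a bound $\|\bar E_\e\|_{L^\infty}\leq C\e^{-2}(1+R(t))$ that is \emph{linear} in $R$, so your Gr\"onwall inequality $\dot R\leq C\e^{-2}R+e^{C\e^{-2}}$ integrates to $R(T)\lesssim (R_0+e^{C\e^{-2}})\,e^{C\e^{-2}T}$, i.e.\ the factor $T$ ends up \emph{inside} the exponent multiplying $\e^{-2}$. That proves only a bound of the form $e^{C_T\e^{-2}}(\cdot)^2$, not the stated $C_T\,e^{C\e^{-2}}[R_0+\e^{-2}]^2$ with the exponential rate $C$ depending on $C_0$ only. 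The missing ingredient is the uniform-in-$\e$ bound $\|\rho_\e\|_{L^2(\bt^2)}\leq C$ coming from energy conservation and Lemma~\ref{lem:rho-Lp} (note $(d+2)/d=2$ when $d=2$): estimating the far-field part of the kernel against $\|\rho_\e-1\|_{L^2}$ via Cauchy--Schwarz gives $|\log l|^{1/2}$ rather than $l^{-1}$, and with the near-field part $\lesssim Ml$ and the choice $l=M^{-1}$ one gets $\|\bar E_\e\|_{L^\infty}\leq C\e^{-2}(1+|\log M|^{1/2})\lesssim \e^{-2}(1+(\log R)^{1/2})$. This makes the differential inequality sublinear in $R$, and comparison with an explicit supersolution yields $R_t\leq e^{C\e^{-2}}\,t\,(\e^{-2}+R_0+\log t\vee 0)$, which is what produces the $T$-independent exponential rate in the statement.

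In $d=3$ your proposal is a programme rather than a proof. You correctly identify that the naive splitting fails and that a trajectory-based argument is needed, but the actual mechanism used here is the Batt--Rein bootstrap (Lemma~\ref{lem:BR}), not a Pfaffelmoser/Schaeffer time-stratification with straightening of characteristics: starting from $\int|x-y|^{-2}\rho_\e\,\di y\leq C\|\rho_\e\|_{L^{5/3}}^{5/9}\|\rho_\e\|_{L^\infty}^{4/9}$ (with $\|\rho_\e\|_{L^{5/3}}$ uniform by the energy), one gets $h_\eta(t,\Delta)\leq e^{C\e^{-2}}h_\rho(t)^{4/9}\Delta$ and then iterates the lemma three times to lower the exponent $4/9\to 8/27\to 16/81\to 1/6$, each stage valid only under the constraint $h_\rho^{-\beta/2}\leq\Delta\leq t$. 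The closure $h_\rho\leq C\bigl(R_0+e^{C\e^{-2}}h_\rho^{1/6}t\bigr)^3$ together with Young's inequality gives the branch $C(R_0^3+e^{C\e^{-2}}T^6)$, while the $T^{-81/8}$ branch is not a ``short-time smoothing regime'' at all: it is simply the alternative case $h_\rho(t)^{-8/81}\geq t$ in which the bootstrap's admissibility condition fails, so $h_\rho\leq t^{-81/8}$ trivially. Your sketch supplies neither the iteration that produces the exponents $1/6$, $6$ and $81/8$ nor the control of the singular field along trajectories with explicit, non-iterated $\e$-dependence (the hardest step), and your concern that $\widehat E_\e$ might enter multiplicatively is moot in this scheme: it enters additively in the first estimate on $h_\eta$ and is absorbed into $e^{C\e^{-2}}$ using $h_\rho\geq 1$. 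As written, the $d=3$ part asserts the conclusion rather than deriving it.
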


\subsection{Proof of Proposition \ref{prop:growth} in the two dimensional case}
\label{sec:growth2d}
In this section, we prove an estimate on the mass density $\rho_\e$ in the case $d=2$. Observe first that if the support of $f_\e$ is contained in the set $\bt^2 \times B_{\br^2}(0 ; R_t)$, then
\be \label{rho-supp}
\| \rho_\e(t) \|_{L^\infty(\bt^d)} \leq C R_t^2 ,
\ee
where $C$ is a constant depending on $\| f_\e(0) \|_{L^\infty(\bt^2 \times \br^2)}$. Our argument will rely on controlling the growth of the support of $f_\e$. To do this we will find a bound on the growth rate of the velocity component of the characteristic trajectories. Since any characteristic trajectory $(X_t, V_t)$ satisfies
\be
\dot V_t = E_\e(X_t) ,
\ee
we look for a uniform estimate on the electric field $E_\e$.

By Proposition~\ref{prop:regU}, for the smooth part $\widehat E_\e$ we have the estimate
\begin{align}
\| \widehat E_\e \|_{L^\infty(\bt^2)} & \leq \exp{\left \{ C \e^{-2} \left (1 + \| \rho_\e \|_{L^2(\bt^2)} \right ) \right \}} \\
& \leq \exp{\left ( C \e^{-2} \right )} ,
\end{align}
where the constant $C$ depends only on a bound on the initial energy. However, for the singular part $\bar E_\e$, Proposition~\ref{prop:regU} only gives us the estimate
\be
\| \bar E_\e \|_{L^\infty(\bt^2)} \leq C \e^{-2} \left ( 1 + \| \rho_\e \|_{L^\infty(\bt^d)} \right ),
\ee
which depends on an $L^\infty$ bound on the mass density. If we use this estimate in combination with \eqref{rho-supp}, this results in a bound on the size of the support of the form
\be \label{diffineq-attempt}
R_t \leq R_0 + \exp{\left ( C \e^{-2} \right )} t + C \e^{-2} \int_0^t R_s^2 \di s .
\ee
The solution of the ODE
\be
\dot y = C(1 + y^2)
\ee
blows up in finite time and so the differential inequality \eqref{diffineq-attempt} is not enough to imply a bound on $R_t$. We need to use a more careful estimate on $\bar E_\e$. In dimension two, we can make use of the fact that the conservation of energy gives us a uniform bound on $\| \rho_\e \|_{L^2(\bt^2)}$, by Lemma~\ref{lem:rho-Lp}, and use an interpolation argument.

\begin{lem}
Let $\rho \in L^1 \cap L^\infty(\bt^2)$ satisfy the bounds
\be
\| \rho \|_{L^1(\bt^2)} = 1, \qquad
\| \rho \|_{L^2(\bt^2)} \leq C_0, \qquad
\| \rho \|_{L^\infty(\bt^2)} \leq M .
\ee
Let $\bar E_\e = - \nabla \bar U_\e$, where $\bar U_\e$ is the unique $W^{1,2}(\bt^2)$ solution of the Poisson equation
\be
\e^2 \Delta \bar U_\e = 1 - \rho .
\ee
Then there exists a constant $C$ depending only on $C_0$ such that
\be
\| \bar E _\e \|_{L^\infty(\bt^2)} \leq C \e^{-2} \left ( 1 +  |\log M |^{1/2}  \right ) .
\ee
\end{lem}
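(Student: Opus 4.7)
The plan is to exploit the logarithmic nature of the Coulomb kernel in two dimensions and interpolate the near-singular part of the convolution against the $L^\infty$ bound with the far-field against the $L^2$ bound. Concretely, using the representation \eqref{def:G0} of the Green function, we can write
\be
\bar E_\e(x) \;=\; -\nabla \bar U_\e(x) \;=\; \e^{-2} \int_{\bt^2} K(x-y)\bigl(\rho(y)-1\bigr)\di y,
\ee
where $K = \nabla G$ decomposes as $K(x) = \frac{1}{2\pi}\frac{x}{|x|^2} + \nabla G_0(x)$ with $G_0 \in C^\infty(\bt^2)$. The smooth contribution $\nabla G_0 \ast (\rho-1)$ is immediately bounded by $C\|\nabla G_0\|_{L^\infty}(\|\rho\|_{L^1}+1) \leq C$, so the whole task reduces to estimating the convolution with the singular kernel $k(x) := \frac{x}{2\pi|x|^2}$.

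The main step is to split the integration at a free radius $r \in (0,\sqrt{d})$, to be optimised:
\be
\left| \int_{\bt^2} k(x-y)\bigl(\rho(y)-1\bigr) \di y\right| \;\leq\; \int_{|x-y|<r}\frac{|\rho(y)-1|}{2\pi|x-y|}\di y \;+\; \int_{r\leq |x-y|\leq \sqrt{d}}\frac{|\rho(y)-1|}{2\pi|x-y|}\di y.
\ee
For the near-field piece I use $\|\rho-1\|_{L^\infty} \leq M+1$ together with $\int_{|z|<r}|z|^{-1}\di z = 2\pi r$, giving a contribution of order $(M+1)r$. For the far-field piece I use Cauchy--Schwarz:
\be
\int_{r\leq |x-y|\leq \sqrt{d}}\frac{|\rho(y)-1|}{|x-y|}\di y \;\leq\; \|\rho-1\|_{L^2}\left(\int_{r \leq |z|\leq \sqrt d}\frac{\di z}{|z|^2}\right)^{1/2} \;\leq\; C(C_0+1)\,|\log r|^{1/2}.
\ee

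It remains to optimise. If $M \leq 2$ the standard estimate $\|\bar E_\e\|_{L^\infty} \leq C\e^{-2}(1 + \|\rho\|_{L^\infty})$ from Proposition~\ref{prop:regU} already gives the claim. For $M > 2$ I choose $r = 1/M$, so that the near-field contribution is $O(1)$ and the far-field contribution is $O(|\log M|^{1/2})$. Combining, $\|k \ast (\rho-1)\|_{L^\infty} \leq C(1 + |\log M|^{1/2})$ with a constant depending only on $C_0$, and multiplying by $\e^{-2}$ yields the stated bound.

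I do not expect any serious obstacle here: the only things requiring care are the torus geometry (handled by restricting to a fundamental cell of diameter $\sqrt d$ where the singular kernel analysis reduces to the Euclidean one), the treatment of the constant background $-1$ in $\rho-1$ (which is harmless given $\|\rho\|_{L^1}=1$), and the matching of cases $M$ small versus $M$ large. The heart of the argument is simply the interpolation between the $L^\infty$ control near the singularity and the $L^2$ control far from it, which is exactly what makes dimension $d=2$ borderline for this type of estimate.
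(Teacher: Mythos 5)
Your argument is correct and is essentially identical to the paper's proof: the same splitting of the kernel into the explicit $2$d Coulomb singularity plus a smooth remainder, the same decomposition of the convolution at a radius $l$, Cauchy--Schwarz with the $L^2$ bound for the far field, the $L^\infty$ bound for the near field, and the same choice $l = M^{-1}$. The only cosmetic difference is your separate treatment of small $M$, which is not needed since $\|\rho\|_{L^1(\bt^2)}=1$ on the unit torus forces $M \geq 1$.
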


\begin{proof}
We use the representation
\begin{align}
\bar E_\e = K \ast (\rho - 1) = C \e^{-2} \int_{\bt^d} \frac{x-y}{|x-y|^2} \left ( \rho(y) - 1 \right ) \di y + \e^{-2} K_0 \ast (\rho - 1) ,
\end{align} 
where $K_0$ is a $C^1(\bt^2)$ function. By Young's inequality,
\begin{align}
\| K_0 \ast (\rho - 1) \|_{L^\infty(\bt^2)}  \leq \| K_0 \|_{L^\infty(\bt^2)} \| \rho - 1 \|_{L^1(\bt^2)} \leq C .
\end{align}
We split the integral term into a part where $|x-y|$ is small and a part where $|x-y|$ is large:
\be
 \int_{\bt^d} \frac{x-y}{|x-y|^2} \left ( \rho(y) - 1 \right ) \di y =  \int_{|x-y| \leq l} \frac{x-y}{|x-y|^2} \left ( \rho(y) - 1 \right ) \di y + \int_{|x-y| \geq l} \frac{x-y}{|x-y|^2} \left ( \rho(y) - 1 \right ) \di y .
\ee

For the part where $|x-y|$ is large, we use Young's inequality with the $L^2$ control on $\rho$:
\begin{align}
\left \lvert \int_{|x-y| \geq l} \frac{x-y}{|x-y|^2} \left ( \rho(y) - 1 \right ) \di y \right \rvert & \leq  \int_{|x-y| \geq l} \frac{1}{|x-y|} \left | \rho(y) - 1 \right | \di y\\
& \leq \left \| \frac{1}{|x|} \mathbbm{1}_{|x| \geq l} \ast |\rho - 1| \right \|_{L^\infty(\bt^2)} \\
& \leq \left ( \int_{|x| \geq l} \frac{1}{|x|^2} \di x \right )^{1/2} \| \rho - 1\|_{L^2(\bt^2)} \\
& \leq C (C_0 + 1) \, | \log{l} | ^{1/2} .
\end{align}

Where $|x-y|$ is small, we use Young's inequality with the $L^\infty$ control on $\rho$:
\begin{align}
\left \lvert \int_{|x-y| \leq l} \frac{x-y}{|x-y|^2} \left ( \rho(y) - 1 \right ) \di y \right \rvert & \leq  \int_{|x-y| \leq l} \frac{1}{|x-y|} \left | \rho(y) - 1 \right | \di y\\
& \leq \left \| \frac{1}{|x|} \mathbbm{1}_{|x| \leq l} \ast |\rho - 1| \right \|_{L^\infty(\bt^2)} \\
& \leq \| \rho - 1\|_{L^\infty(\bt^2)} \int_{|x| \leq l} \frac{1}{|x|} \di x   \\
& \leq C M \, l .
\end{align}
Altogether we obtain
\be
\| \bar E _\e \|_{L^\infty(\bt^2)} \leq C \e^{-2} \left [ 1 + C_0 |\log l |^{1/2} + M l \right ] .
\ee
We choose $l = M^{-1}$ and conclude that
\be
\| \bar E _\e \|_{L^\infty(\bt^2)} \leq C \e^{-2} \left ( 1 +  |\log M |^{1/2}  \right ) ,
\ee
where $C$ depends on $C_0$ only.

\end{proof}

By using this estimate, we can deduce a differential inequality on $R_t$ that can be closed.

\begin{lem} \label{lem:supp-growth-2d}
Let $f_\e$ be a solution of \eqref{vpme-quasi} with bounded energy and compact support contained in $\bt^2 \times B_{\br^2}(0 ; R_t)$ at time $t$. Then $R$ satisfies the estimate
\be
R_t \leq e^{C \e^{-2}} t \left ( 1 + R_0 + (\log{t} \vee 0 )  \right ) .
\ee
\end{lem}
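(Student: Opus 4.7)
The plan is to control the growth of $R_t$ via a differential inequality derived from the characteristic flow. Since $f_\e$ has bounded density, the vector field $E_\e$ is log-Lipschitz (by Proposition~\ref{prop:regU} and Lemma~\ref{lem:logLip}), hence the characteristic system $\dot X = V,\ \dot V = E_\e(X)$ generates a well-defined flow whose image supports $f_\e(t)$. The key observation is that if $(X_s, V_s)$ is a characteristic with $|V_0| \leq R_0$, then
$$
|V_t| \leq R_0 + \int_0^t \lVert E_\e(s, \cdot) \rVert_{L^\infty(\bt^2)} \di s,
$$
so it suffices to bound $\lVert E_\e \rVert_{L^\infty}$ in terms of $R_s$.

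First I would split $E_\e = \bar E_\e + \widehat E_\e$ and estimate the two pieces separately. For the smooth part, the regularity bounds in Proposition~\ref{prop:regU}, together with the uniform $L^{(d+2)/d}$ control on $\rho_\e$ coming from conservation of energy (Lemma~\ref{lem:rho-Lp}), yield
$$
\lVert \widehat E_\e(s) \rVert_{L^\infty(\bt^2)} \leq \exp(C \e^{-2}),
$$
with $C$ depending only on the energy bound. For the singular part, I would apply the interpolation estimate just proved to obtain
$$
\lVert \bar E_\e(s) \rVert_{L^\infty(\bt^2)} \leq C \e^{-2}\bigl(1 + |\log \lVert \rho_\e(s)\rVert_{L^\infty(\bt^2)}|^{1/2}\bigr).
$$
Since the support of $f_\e(s)$ is contained in $\bt^2 \times B(0,R_s)$ and $f_\e$ is uniformly bounded in $L^\infty$, we have $\lVert \rho_\e(s) \rVert_{L^\infty(\bt^2)} \leq C R_s^2$, so $|\log \lVert \rho_\e(s)\rVert_\infty|^{1/2} \leq C(1 + (\log R_s \vee 0)^{1/2})$.

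Combining these estimates gives the integral inequality
$$
R_t \leq R_0 + e^{C\e^{-2}}\, t + C\e^{-2}\int_0^t \bigl(\log R_s \vee 0\bigr)^{1/2} \di s.
$$
The last step is to close this inequality. Setting $A := e^{C\e^{-2}}$, I would verify by direct substitution that the ansatz
$$
\Phi(t) := A\, t\,\bigl(1 + R_0 + (\log t \vee 0)\bigr)
$$
is a supersolution: for $t \geq 1$, $\Phi(t) \geq A t \log t$ so $(\log \Phi(t))^{1/2} \leq C(\log t)^{1/2} + C(\log(1+R_0))^{1/2}$, and comparing derivatives shows $\dot\Phi$ dominates the right-hand side of the integral inequality once the constant $C$ in the exponent is enlarged appropriately; for $t \leq 1$ the linear term $At$ absorbs everything. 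A standard continuity / bootstrap argument then gives $R_t \leq \Phi(t)$, which is exactly the claimed bound.

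The main obstacle is the closing of the Gr\"onwall-type inequality: a naive use of the $L^\infty$ Poisson estimate gives a quadratic right-hand side and blow-up in finite time, so the interpolation lemma yielding only $(\log \lVert \rho \rVert_\infty)^{1/2}$ dependence (which is specific to $d=2$ and relies crucially on the $L^2$ control of $\rho_\e$ from energy conservation) is what makes the argument work.
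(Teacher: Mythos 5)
Your proof follows essentially the same route as the paper's: split $E_\e = \bar E_\e + \widehat E_\e$, bound the smooth part by $e^{C\e^{-2}}$ via Proposition~\ref{prop:regU} and the singular part via the two-dimensional interpolation lemma (giving the $|\log \lVert \rho_\e\rVert_{L^\infty}|^{1/2}$ dependence), use $\lVert \rho_\e(s)\rVert_{L^\infty} \leq C R_s^2$ along characteristics, and close the resulting integral inequality by comparison with an explicit supersolution --- the paper compares with $z(t)=(1+2Ct)\left[R_0+\log(1+2Ct)\right]$ while you verify the ansatz $At\left(1+R_0+(\log t \vee 0)\right)$ directly, which is only a cosmetic difference. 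The one caveat is that for very small $t$ your ansatz does not dominate the constant $R_0$ on the right-hand side of the integral inequality; this defect is already present in the stated bound itself and is harmless for how the lemma is used (comparing instead with $R_0+At\left(1+R_0+(\log t\vee 0)\right)$ fixes it).
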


\begin{proof}

We consider the velocity coordinate $V_t(x,v)$ of an arbitrary characteristic trajectory starting from $(x,v)$ at time $t=0$. We have
\begin{align}
|V_t(x,v)| & \leq |v| + \int_0^t \| E_\e \|_{L^\infty(\bt^2)} \di s\\
& \leq |v| + \int_0^t \| \widehat E_\e \|_{L^\infty(\bt^2)} + \| \bar E_\e \|_{L^\infty(\bt^2)} \di s \\
& \leq |v| + \int_0^t \exp{(C \e^{-2})} + C \e^{-2} \left ( 1 +  |\log R_s |^{1/2}  \right ) \di s
\end{align}

The size of the support is controlled by the modulus of the furthest-reaching characteristic trajectory that starts within the support of $f_\e(0)$:
\begin{align}
R_t & \leq \sup_{(x,v) \in \bt^2 \times B_{\br^2}(0 ; R_0)} |V_t(x,v)| \\
& \leq \sup_{(x,v) \in \bt^2 \times B_{\br^2}(0 ; R_0)} \left \{ |v| + \int_0^t \exp{(C \e^{-2})} + C \e^{-2} \left ( 1 +  |\log R_s |^{1/2}  \right ) \di s \right \} \\
& \leq R_0 + \int_0^t \exp{(C \e^{-2})} + C \e^{-2} |\log R_s |^{1/2} \di s .
\end{align}

We compare this with the function
$$
z(t) = (1 + 2C  t) \left [ R_0 + \log{(1 + 2C  t)} \right ] .
$$
By \cite[Lemma A.1]{IGP-WP}, this satisfies
$$
\dot z \geq C  (1 + \log{(1+z)}) .
$$
We deduce that
\be
R_t \leq e^{C \e^{-2}} t \left ( \e^{-2} + R_0 + (\log{t} \vee 0 )  \right ) .
\ee

\end{proof}

\begin{proof}[Proof of Proposition~\ref{prop:growth}, case $d=2$]
We combine Lemma~\ref{lem:supp-growth-2d} with the elementary estimate \eqref{rho-supp}: for all $t \in [0,T]$,
\begin{align}
\| \rho_\e(t) \|_{L^\infty(\bt^d)} & \leq C R_t^2 \\
& \leq e^{C \e^{-2}} t \left ( \e^{-2} + R_0 + (\log{t} \vee 0 )  \right )^2 \\
& \leq C_T e^{C \e^{-2}} \left ( R_0 + \e^{-2}   \right )^2 .
\end{align}

\end{proof}

\subsection{Proof of Proposition \ref{prop:growth} in the three dimensional case}
\label{sec:growth3d}
In this section, we prove a mass bound in the case $d=3$. In this case, the conservation of energy gives us a uniform bound on $\| \rho_\e \|_{L^{5/3}(\bt^3)}$. This integrability is not enough to allow us to use the elementary interpolation approach that we used in the the two dimensional case. Instead, we will adapt estimates devised by Batt and Rein \cite{BR} for the classical Vlasov-Poisson equation on $\bt^3 \times \br^3$. We used this approach in \cite{IGP-WP} to prove the existence of solutions of the VPME system with bounded density. Here we focus on identifying how the bounds on $\| \rho_\e \|_{L^\infty(\bt^3)}$ depend on $\e$.

As in the two dimensional case, Batt and Rein's argument relies on controlling the mass density using the characteristic trajectory with velocity component of greatest Euclidean norm starting within the support of $f_\e(0)$ at time zero. They prove a bootstrap estimate on the convolution integral defining the singular part of the electric field. We recall this estimate in the following technical lemma.

\begin{lem}
\label{lem:BR}
Let $(X(t; s, x, v), V(t; s,x, v))$ denote the solution at time $t$ of an ODE
\be
\begin{pmatrix} \dot X(t) \\ \dot V(t) \end{pmatrix} = a(X(t), V(t)) , \qquad
\begin{pmatrix} X(s) \\ V(s) \end{pmatrix} = \begin{pmatrix} x \\ v \end{pmatrix},
\ee
where $a$ is of the form
$$
a(X, V) = \begin{pmatrix} V \\ a_2(X,V) \end{pmatrix} .
$$
Assume that $f = f(t,x,v)$ is the pushforward of $f_0$ along the associated characteristic flow; that is, for all $\phi \in C_b(\bt^3 \times \br^3)$,
$$
\int_{\bt^3 \times \br^3} f(t,x,v) \phi(x,v) \di x \di v = \int_{\bt^3 \times \br^3} f(s,x,v) \phi (X(t; s, x, v), V(t; s,x, v)) \di x \di v,
$$
and that $f$ is bounded with a uniformly bounded second moment in velocity:
\be
\lVert f \rVert_{L^{\infty}([0,T] \times \bt^3 \times \br^3 )}  \leq C, \qquad\sup_{t \in [0,T]} \lVert f_t |v|^2 \rVert_{L^{1}(\bt^3 \times \br^3)}  \leq C .
\ee
Define the quantities $h_\rho, h_\eta$ by
\begin{align} \label{def:h_rho-eta}
&h_{\rho}(t)  : = \sup \{ \lVert \rho_f(s) \rVert_{L^{\infty}(\bt^3)} ; 0 \leq s \leq t \} \\
& h_\eta (t, \Delta)  : = \sup \{ |V(t_1, \tau, x, v) - V(t_2, \tau, x, v)| ; \ 0 \leq t_1, t_2, \tau \leq t ,\  |t_1 - t_2| \leq \Delta, \ (x,v) \in \bt^3 \times \br^3 \} .
\end{align}
Assume that there exists $C_*>1$ such that
\be \label{h-beta}
h_\eta(t, \Delta) \leq C_* h_\rho(t)^\beta \Delta.
\ee
Then for all $t_1 < t_2 \leq t$, if
$$
h_\rho(t)^{-\beta/2} \leq \Delta \leq t 
$$
then
 \be
\int_{t_1}^{t_2} \int_{\bt^3} |X(s)-y|^{-2} \rho_f(s, y) \di y \di s \leq C\,  {C_*}^{4/3} \left (  h_\rho(t)^{2 \beta /3} + h_\rho(t)^{1/6} \right )\Delta .
\ee
 \end{lem}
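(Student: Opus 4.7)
I would follow the classical Batt--Rein bootstrap estimate for the three-dimensional Vlasov--Poisson system, carefully tracking the dependence on the dispersion constant $C_*$ and on the exponent $\beta$. The strategy is to decompose, for each fixed $s \in [t_1, t_2]$, the spatial integral
\[
\int_{\bt^3} |X(s) - y|^{-2}\, \rho_f(s, y)\,\di y
\]
into three regions: the near region $\{|X(s) - y| \le p\}$, the intermediate shell $\{p < |X(s) - y| \le q\}$, and the far region $\{|X(s) - y| > q\}$. Here $p \le q$ are two radii depending on $h_\rho(t)$, $C_*$ and $\beta$ that will be optimised at the end of the argument.

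\textbf{Near and far pieces.} In the near region I would use the uniform bound $\rho_f(s,\cdot) \le h_\rho(t)$ together with the local integrability of $|z|^{-2}$ on $\br^3$ to obtain a pointwise-in-$s$ bound of order $h_\rho(t)\, p$. In the far region I would use that $\rho_f(s,\cdot)$ is a probability density (conservation of mass under pushforward by the flow) to obtain a pointwise-in-$s$ bound of order $q^{-2}$. Integrating in $s$ over an interval of length at most $\Delta$ then contributes at most $C\Delta\,[h_\rho(t)\, p + q^{-2}]$ to the final estimate.

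\textbf{Intermediate piece.} This is the core of the argument and the place where the dispersion assumption \eqref{h-beta} enters. I would use the pushforward representation of $f$ along the characteristic flow to rewrite the double integral over the intermediate shell as an integral over initial phase-space data, with the inner time integral becoming the time that a given characteristic spends inside the shell around $X(s)$. The key geometric observation is that this time is at most of order $(q-p)/\sigma$, where $\sigma$ is a relative speed between the two trajectories, and $\sigma$ is controlled from below by the dispersion bound $h_\eta(t, \Delta) \le C_* h_\rho(t)^\beta \Delta$. Combining this with the standard kinetic interpolation $\rho_f(s, y) \le C \|f\|_\infty^{2/5} m_2(s,y)^{3/5}$, where $m_2(s,y) := \int_{\br^3} |v|^2 f(s, y, v)\,\di v$, allows one to gain an additional factor involving $h_\rho(t)^{1/6}$ that is independent of $\beta$.

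\textbf{Optimisation and main obstacle.} After gathering all contributions, I would optimise over $p$ and $q$. The hypothesis $\Delta \ge h_\rho(t)^{-\beta/2}$ is exactly what is needed so that the time scale of crossing the shell is compatible with the window $[t_1, t_2]$, ensuring that the final bound scales linearly in $\Delta$. The two terms $h_\rho(t)^{2\beta/3}$ and $h_\rho(t)^{1/6}$ arise from two different balances: one dominated by the dispersion contribution (whence $2\beta/3$) and one dominated by the kinetic interpolation of $\rho_f$ (whence $1/6$). The main technical obstacle is the intermediate shell: one must perform the change of variables along the characteristic flow, check that the Jacobian produces no loss (which is automatic for the Hamiltonian flows of interest here but requires care in the abstract ODE framework of the statement), and combine the dispersion inequality with the kinetic interpolation in exactly the right way to produce the exponents $2\beta/3$ and $1/6$ together with the prefactor $C_*^{4/3}$.
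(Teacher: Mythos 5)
The paper itself offers no proof of this lemma: it is recalled verbatim from Batt--Rein \cite{BR} (in the form used in \cite{IGP-WP}), so your attempt has to be measured against that classical argument, and at the level of overall strategy you have identified it correctly (bootstrap estimate, pushforward along characteristics, passage-time argument powered by the dispersion hypothesis \eqref{h-beta}, interpolation with the energy bound). However, your treatment of the intermediate region contains a genuine gap. You claim that the time a field particle spends in the shell $p<|X(s)-y|\le q$ is of order $(q-p)/\sigma$ with the relative speed $\sigma$ ``controlled from below by the dispersion bound $h_\eta(t,\Delta)\le C_* h_\rho(t)^\beta\Delta$.'' But \eqref{h-beta} only bounds how much the velocity of a single characteristic can \emph{vary} over a window of length $\Delta$; it gives no lower bound whatsoever on the relative velocity between the reference trajectory and a field particle. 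A particle travelling with nearly the same velocity as $X(s)$ can sit in the shell for the entire window, and nothing in the hypotheses excludes this, so a purely spatial near/shell/far decomposition cannot close.

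The classical argument repairs exactly this point by splitting \emph{in relative velocity} as well: writing $\rho_f(s,y)=\int f(s,y,w)\,\di w$, the set $\{|w-V(s)|\le P\}$ is handled not by a crossing-time argument but by $\int_{\{|w-V(s)|\le P\}} f(s,y,w)\,\di w\le C\lVert f\rVert_\infty P^3$, interpolated against the uniform $L^{5/3}$ (energy) bound on $\rho_f$; only on $\{|w-V(s)|\ge P\}$, with $P$ taken large compared with $C_* h_\rho(t)^\beta\Delta$ so that \eqref{h-beta} guarantees the relative velocity stays bounded below on the whole window (this is where the hypothesis $h_\rho(t)^{-\beta/2}\le\Delta$ enters), can one run the passage-time estimate along the pushed-forward characteristics. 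Even there one needs a lower distance cutoff $r_1$ handled by the crude bound $C h_\rho(t)\,r_1\,\Delta$, because the time integral of $|X(s)-Y(s)|^{-2}$ along a passage is controlled by $C/(P r_1)$, not by a shell width divided by the speed. The exponents $2\beta/3$ and $1/6$ and the prefactor $C_*^{4/3}$ only emerge from optimising over $r_1$, $P$ and the outer radius; your sketch, which omits the velocity splitting and the lower distance cutoff, does not produce them and cannot be completed as stated.
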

 
 We complete the proof of Proposition~\ref{prop:growth} by combining Lemma~\ref{lem:BR} with the estimates on $\widehat E_\e$ from Proposition~\ref{prop:regU}.
 
 \begin{proof}[Proof of Proposition~\ref{prop:growth}, case $d=3$]
 
 For any characteristic trajectory $(X_t, V_t)$, we have for any $0 \leq t_1 < t_2 \leq T$,
 \be
\left \lvert V_{t_1} - V_{t_2} \right \rvert \leq \int_{t_2}^{t_1} |E_\e (X_s)| \di s .
 \ee
 We can write the total force $E_\e$ in the form
 \be \label{total-force}
 E_\e(x) = \e^{-2} [K_0 \ast (\rho_\e - 1)] (x) + C \e^{-2} \int_{\bt^3} \frac{x-y}{|x-y|^3} \rho_\e(y) \di y + \widehat E_\e .
 \ee
 Since $K_0$ is a $C^1(\bt^3)$ function and $\rho_\e$ has unit mass, the first term satisfies the bound
 \begin{align}
 | \e^{-2} [K_0 \ast (\rho_\e - 1)] (x) | &\leq \e^{-2} \| K_0 \|_{L^\infty(\bt^3)} \| \rho_\e - 1 \|_{L^1(\bt^3)} \\
 & \leq C \e^{-2} .
 \end{align}
 For the last term, we use Proposition~\ref{prop:regU}:
 \begin{align}
 |\widehat E_\e|  \leq \exp{\left [ C \left ( 1 + \| \rho_\e \|_{L^{5/3}(\bt^3)} \right )  \right ]} \leq \exp{(C \e^{-2})} .
 \end{align}
 Therefore,
  \be \label{h-eta-initial}
\left \lvert V_{t_1} - V_{t_2} \right \rvert \leq \int_{t_2}^{t_1} \left [ \exp{(C \e^{-2})} + C \e^{-2} \int_{\bt^3} |X(s) - y|^{-2} \rho_\e(y) \di y \right ]\di s .
 \ee
 From \cite{BR} we have the estimate  \be\label{eq:4/9}
\int_{\bt^3} |x-y|^{-2} \rho_\e(s,y) \di y \leq C\, \lVert \rho_\e(s,\cdot) \rVert_{L^{5/3}}^{5/9} \lVert \rho_\e(s,\cdot) \rVert_{L^\infty(\bt^3)}^{4/9} \leq C\, \lVert \rho_\e(s,\cdot) \rVert_{L^\infty(\bt^3)}^{4/9},
\ee
where $C$ depends only on $\| f_\e(0)\|_{L^\infty(\bt^3 \times \br^3)}$ and the initial energy. Alternatively, by an elementary argument we can prove the estimate
\be
\int_{\bt^3} |x-y|^{-2} \rho_\e(s,y) \di y \leq C\, \lVert \rho_\e(s,\cdot) \rVert_{L^\infty(\bt^3)}^{4/9 + \eta},
\ee
 for any $\eta > 0$; see \cite[Remark 5.7]{IGP-WP}. The choice of exponent does not affect the number of iterations used in the next part of the proof.
 
 By \eqref{h-eta-initial}, we have
\be \label{initialise}
h_\eta(t, \Delta) \leq  \left ( C \e^{-2} h_\rho(t)^{\frac49} + e^{C \e^{-2}}  \right ) \Delta .
\ee
Since $\rho_\e$ has total mass 1, $h_\rho \geq 1$. Thus
\be \label{initialise-2}
h_\eta(t, \Delta) \leq  e^{C \e^{-2}} \Delta h_\rho(t)^{\frac49} .
\ee
This means that condition \eqref{h-beta} is satisfied with $C_* = e^{C \e^{-2}}$. We apply Lemma~\ref{lem:BR} to improve our bound on 
\be
\int_{\bt^3} |X(s)-y|^{-2} \rho_\e(y) \di y \leq C \, C_*^{4/3}  \left ( h_\rho(t)^{\frac23 \cdot \frac49} + h_\rho(t)^{1/6} \right ) \Delta .
\ee
Feeding this new estimate into \eqref{h-eta-initial}, we obtain
\be \label{bootstrap}
h_\eta(t, \Delta) \leq C \e^{-2} \, C_*^{4/3}  \left ( h_\rho(t)^{\frac23 \cdot \frac49} + h_\rho(t)^{1/6} \right ) \Delta + ( C\e^{-2} + C e^{C \e^{-2}} ) \Delta\leq e^{C \e^{-2}} h_\rho(t)^{\frac{8}{27}}\Delta ,
\ee
as long as
$$
h_\rho(t)^{-2/9} \leq \Delta \leq t.
$$
 
 We will iterate this process until we achieve the lowest possible exponent for $h_\rho$, i.e. $\frac16$. Applying Lemma~\ref{lem:BR} a second time, we obtain
 \be
\int_{\bt^3} |X(s)-y|^{-2} \rho_\e(y) \di y \leq C \, C_*^{4/3}  \left ( h_\rho(t)^{\frac23 \cdot \frac{8}{27}} + h_\rho(t)^{1/6} \right ) \Delta ,
\ee
with $C_* = e^{C \e^{-2}}$, provided that
$$
h_\rho(t)^{-4/27} \leq \Delta \leq t ,
$$
and therefore
\be \label{bootstrap-2}
h_\eta(t, \Delta) \leq C \e^{-2} \, C_*^{4/3}  \left ( h_\rho(t)^{\frac23 \cdot \frac{8}{27}} + h_\rho(t)^{1/6} \right ) \Delta + ( C\e^{-2} + C e^{C \e^{-2}} ) \Delta\leq e^{C \e^{-2}} h_\rho(t)^{\frac{16}{81}}\Delta .
\ee

Applying Lemma~\ref{lem:BR} once more, we obtain 
 \be
\int_{\bt^3} |X(s)-y|^{-2} \rho_\e(y) \di y \leq C \, C_*^{4/3}  \left ( h_\rho(t)^{\frac23 \cdot \frac{16}{81}} + h_\rho(t)^{1/6} \right ) \Delta ,
\ee
with $C_* = e^{C \e^{-2}}$, provided that
$$
h_\rho(t)^{-8/81} \leq \Delta \leq t ,
$$
and therefore
\be \label{bootstrap-3}
h_\eta(t, \Delta) \leq C \e^{-2} \, C_*^{4/3}  \left ( h_\rho(t)^{\frac23 \cdot \frac{16}{81}} + h_\rho(t)^{1/6} \right ) \Delta + ( C\e^{-2} + C e^{C \e^{-2}} ) \Delta\leq e^{C \e^{-2}} h_\rho(t)^{\frac{1}{6}}\Delta ,
\ee
since $\frac{32}{243} < \frac{1}{6}$ and $h_\rho \geq 1$.

Finally, we use this new growth estimate on characteristic trajectories to control the mass density. Assuming that $f_\e(0)$ is supported in $\bt^3 \times B_{\br^3}(0 ; R_t)$, we have
\be
h_\rho \leq C \| f_\e \|_{L^\infty(\bt^d \times \br^d)} \left ( R_0 + h_\eta(t,t) \right )^3
\ee
Since we work with $L^\infty(\bt^d \times \br^d)$ solutions, we have a uniform bound on $\| f_\e \|_{L^\infty(\bt^d \times \br^d)}$ depending only on the initial data. Therefore, using \eqref{bootstrap-3}, we find that if $h_\rho(t)^{-8/81} \leq t$,
\begin{align}
h_\rho &\leq C \left ( R_0 + h_\eta(t,t) \right )^3 \leq C \left ( R_0 + e^{C \e^{-2}} h_\rho(t)^{\frac{1}{6}} t \right )^3 \\
& \leq CR_0^3+ e^{C \e^{-2}}  h_\rho(t)^{1/2} t^3 \leq CR_0^3+ \frac{\bigl(e^{C \e^{-2}}  t^3\bigr)^2+ h_\rho(t)}{2} .
\end{align}
Hence
\be
h_\rho \leq C \left ( R_0^3+ e^{C \e^{-2}}  t^6 \right ) .
\ee
If instead $h_\rho(t)^{-8/81} \geq t$, then
\be
h_\rho(t) \leq t^{- \frac{81}{8}} .
\ee
Therefore, we may conclude that
$$
h_\rho(t) \leq \max \{ t^{-81/8}, C(R_0^3 + e^{C \e^{-2}} t^6) \} .
$$

 \end{proof}

\section{Quasineutral limit: proof of Theorem \ref{thm:quasi}} \label{sec:QN}

In this section we will prove Theorem~\ref{thm:quasi}. The main idea is to consider the unique bounded density solution $g_\e$ of the VPME system \eqref{vpme-quasi} with initial datum $g_\e(0)$. Since $g_\e(0)$ is compactly supported, a bounded density solution exists by \cite[Theorem 2.4]{IGP-WP} and is unique by \cite[Theorem 2.3]{IGP-WP}, so $g_\e$ is well defined. We will use $g_\e$ as a stepping stone between $f_\e$ and a solution $g$ of the KIE system \eqref{KIE}.

\begin{proof}[Proof of Theorem~\ref{thm:quasi}]
Let $g_\e$ denote the solution of \eqref{vpme-quasi} with data $g_\e(0)$. We will use $g_\e$ to interpolate between the solution $f_\e$ of \eqref{vpme-quasi} starting from $f_\e(0)$ and the solution $g$ of \eqref{KIE} starting from $g(0)$. By the triangle inequality,
\be \label{triangle}
W_1 (f_\e(t), g(t)) \leq W_1 (f_\e(t), g_\e(t)) + W_1 (g_\e(t), g(t)).
\ee

The quasineutral limit for the VPME system \eqref{vpme-quasi} with uniformly analytic data can be proved using the methods of Grenier \cite{Grenier96}, with the modifications for the massless electrons case described in \cite[Proposition 4.1]{IHK1}. Grenier's result gives an $H^s$ convergence of a representation of the VP system as a multi-fluid pressureless Euler system. In \cite[Corollary 4.2]{IHK1}, it is shown how this implies convergence in $W_1$.
Since the initial data $g_\e(0)$ satisfy Grenier's assumptions,
there exists a solution $g$ of \eqref{KIE} on a time interval $[0, T_*]$ with initial data $g(0)$ such that
\be \label{eq:gre}
\lim_{\e \to 0} \sup_{t \in [0,T_*]} W_1 (g_\e(t), g(t)) = 0.
\ee

To deal with the first term of \eqref{triangle}, we use a stability estimate around $g_\e$ for the VPME system. By Proposition~\ref{prop:Wstab},
\begin{align} \label{str-str-stab}
W_1(f_\e(t), g_\e(t)) & \leq W_2(f_\e(t), g_\e(t))  \\
&  \leq \begin{cases}
C \exp{\left [ C\(1 + \log{\frac{W_2(f_\e(0), g_\e(0))}{4 \sqrt{d}}}\)\, e^{- C_\e t} \right ]} & \text{ if }\  W_2(f_\e(0), g_\e(0)) \leq d \\
W_2(f_\e(0), g_\e(0))\, e^{C_\e t} & \text{ if }\  W_2(f_\e(0), g_\e(0)) > d .
\end{cases}
\end{align}
where $C_\e$ may be chosen to satisfy
$$
C_\e \leq  \,\mathbf{\overline{\exp}_2}{(C \e^{-2})} (M + 1) .
$$
for any $M$ satisfying
\be \label{rho-Mbdd}
\sup_{[0,T_*]} \lVert \rho_{f_\e}(t) \rVert_{L^\infty_x}, \qquad \sup_{[0,T_*]} \lVert \rho_{g_\e}(t) \rVert_{L^\infty_x} \leq M .
\ee

By Proposition~\ref{prop:growth}, we may take $M$ such that
\be \label{M-epsctrl}
M \leq C\, e^{C \e^{-2}} .
\ee
The constant $C$ depends on $T_*$, $C_0$, the dimension $d$, and $C_1$, the rate of growth of the initial support. We emphasise again that the appearance of an exponential rate here is a consequence of the form of the equation rather than because condition \eqref{quasi:data-spt} allows fast growth of the initial support. It follows that we may estimate
\be \label{C-tripleexp}
C_\e\, t \leq C \,\mathbf{\overline{\exp}_2} (C \e^{-2}) 
\ee
for all $t \in [0,T_*]$. Hence we have convergence if 
\be
\frac{\left \lvert \log{\frac{W_2(f_\e(0), g_\e(0))}{4 \sqrt{d}}} \right \rvert}{\,\mathbf{\overline{\exp}_3}  (C \e^{-2})}  \to \infty
\ee
as $\e$ tends to zero. This holds if
\be
W_2(f_\e(0), g_\e(0)) \leq (\,\mathbf{\overline{\exp}_4} (C \e^{-2}))^{-1}
\ee
for sufficiently large $C$. In this case, it follows by \eqref{str-str-stab} that
$$
\lim_{\e \to 0} \sup_{t \in [0,T_*]}  W_1(f_\e(t), g_\e(t)) = 0 .
$$
Combined with \eqref{eq:gre} and \eqref{triangle}, this completes the proof.

\end{proof}

\section{Mean field limit: proof of Theorem \ref{thm:MFL}} \label{sec:MFL}

In this section we will prove Theorem~\ref{thm:MFL}. Recall that we want to show that the empirical measure $\mu^N_r$ corresponding to a solution of the regularised particle system \eqref{ODEreg} converges to a solution $f$ of \eqref{vpme}, in the Wasserstein sense:
\be
\sup_{t \in [0,T]} W_2(f(t), \mu^N_r(t)) \to 0 ,
\ee
provided that the initial data $\mu^N_r(0)$ converge sufficiently fast to $f(0)$. 

The key observation is that the particle system \eqref{ODEreg} is constructed such that for each $N$, $\mu^N_r$ is a weak solution of a regularised version of the VPME system:
\begin{equation}
\label{vpme-reg}
 \left\{ \begin{array}{ccc}\pt_t f_r+v\cdot \nabla_x f_r+ E_r \cdot \nabla_v f_r=0,  \\
E_r =- \chi_r \ast \nabla U_r, \\
\Delta U_r=e^{U_r} - \chi_r \ast \rho[f_r] ,\\
f_r\vert_{t=0}=f(0) \ge0,\ \  \int_{\bt^d \times \br^d} f(0) \,dx\,dv=1.
\end{array} \right.
\end{equation}
The convergence of $\mu^N_r$ to $f$ is therefore a kind of stability result, in two stages. 

First, as $N$ tends to infinity with $r$ fixed, $\mu^N_r$ converges to $f_r$, the solution of \eqref{vpme-reg} with initial datum $f(0)$. This holds because the force in \eqref{vpme-reg} is regular enough that the equation has a stability property even in the class of measures (as investigated for example by Dobrushin \cite{Dob}). Of course, the rate of this convergence will degenerate in the limit as $r$ tends to zero. Our goal is therefore to quantify this convergence in $W_2$, optimising the constants so as to minimise the rate of blow-up as $r$ tends to zero. This is the key difference between our approach, which is based on \cite{Laz}, and the approach of \cite{Dob}. We use a weak-strong stability estimate, which holds for initial data converging sufficiently quickly, to improve the dependence of the constants on $r$.

Secondly, in \cite{IGP-WP} we proved that, if $f(0)$ is bounded and compactly supported, then as $r$ tends to zero, $f_r$ converges to $f$, where $f$ is the unique bounded density solution of the VPME system \eqref{vpme} with initial datum $f(0)$. This observation was part of our construction of solutions to \eqref{vpme}. Our strategy will be to show that this convergence can be quantified in $W_2$, again aiming to optimise the rate.

By combining these two limits, we will identify a regime for the initial data in which $\mu^N_r$ converges to $f$.

\subsection{Behaviour of the $W_2$ distance under regularisation}

We recall some useful results on the behaviour of Wasserstein distances under regularisation by convolution. See \cite[Proposition 7.16]{Vil03} for proofs. Our first observation is that regularising two measures cannot increase the Wasserstein distance between them.

\begin{lem} \label{W_moll_two}
Let $\mu$, $\nu$ be probability measures, $r>0$ any positive constant and $\chi_r$ a mollifier as defined in \eqref{Def_chi}. Then
\be \label{Est_W_conv}
W_p(\chi_r * \mu, \chi_r * \nu) \leq W_p(\mu, \nu) .
\ee
\end{lem}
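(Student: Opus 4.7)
The plan is to exploit the interpretation of convolution with a probability density as adding an independent random variable, and then transport an optimal coupling of $\mu$ and $\nu$ across this operation. Since $\chi$ has unit mass and $\chi_r$ is defined by a mass-preserving rescaling, $\chi_r$ is itself a probability density on $\bt^d$; moreover $\chi_r * \mu$ is precisely the law of $X+Z$ where $X \sim \mu$ and $Z \sim \chi_r(z)\di z$ are independent, and similarly for $\nu$.

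Concretely, I would begin by selecting an optimal coupling $\pi \in \Pi(\mu,\nu)$, which exists since the cost $|x-y|^p$ is lower semicontinuous on the Polish space $\bt^d\times\bt^d$ (standard existence of optimal plans, as in \cite{Vil03}). Next I would construct the candidate coupling of the regularised measures as the pushforward
\begin{equation}
\widetilde\pi := T_\# \bigl(\pi \otimes \chi_r(z)\di z\bigr), \qquad T(x,y,z) := (x+z,\, y+z).
\end{equation}
A short check on marginals shows $\widetilde\pi \in \Pi(\chi_r*\mu,\, \chi_r*\nu)$: for any bounded measurable $\phi$,
\begin{equation}
\int \phi(u)\, \di(\mathrm{pr}_1)_\#\widetilde\pi(u) = \int \phi(x+z)\,\chi_r(z)\,\di\pi(x,y)\,\di z = \int (\chi_r *\phi)\, \di\mu,
\end{equation}
which identifies the first marginal as $\chi_r*\mu$ (and symmetrically for the second).

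Then the cost of $\widetilde\pi$ satisfies
\begin{equation}
\int d(u,v)^p\,\di\widetilde\pi(u,v) = \int |x+z - (y+z)|^p\,\chi_r(z)\,\di\pi(x,y)\,\di z = \int |x-y|^p\,\di\pi(x,y) = W_p^p(\mu,\nu),
\end{equation}
because the translation $z$ cancels. Taking the infimum over all couplings on the left-hand side yields $W_p^p(\chi_r*\mu,\chi_r*\nu) \leq W_p^p(\mu,\nu)$, which is the desired inequality.

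There is essentially no obstacle here beyond being careful about working on the torus: the translation $(x,z)\mapsto x+z$ is to be interpreted modulo $\bz^d$, and since the toroidal metric is translation-invariant the cancellation $|x+z-(y+z)|=|x-y|$ remains valid. This is the only point requiring attention; the rest is a direct consequence of the probabilistic description of convolution.
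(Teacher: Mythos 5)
Your proof is correct. The paper does not prove this lemma itself but simply cites \cite[Proposition 7.16]{Vil03}; your argument --- pushing an optimal (or near-optimal) coupling $\pi\in\Pi(\mu,\nu)$ forward under $(x,y,z)\mapsto(x+z,y+z)$ with $z\sim\chi_r(z)\di z$, checking the marginals, and using translation invariance of the toroidal metric so that the cost is unchanged --- is exactly the standard argument behind that citation, and your remark that $d(x+z,y+z)=d(x,y)$ holds for the quotient metric on $\bt^d$ is the right point to flag.
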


We also have an explicit control on the Wasserstein distance between a measure and its regularisation:
\begin{lem} \label{W_moll_one}
Let $\mu$ be a probability measure and $r> 0$. Let $\chi_r$ be a mollifier as defined in \eqref{Def_chi}. Then
\be \label{Est_W_self_conv}
W_p(\chi_r * \mu, \mu) \leq r .
\ee
\end{lem}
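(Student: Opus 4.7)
The plan is to exhibit an explicit coupling between $\chi_r \ast \mu$ and $\mu$ and bound the resulting transport cost directly, rather than going through the Kantorovich dual. The underlying intuition is probabilistic: convolving a distribution with $\chi_r$ corresponds to adding an independent random displacement of size at most (order) $r$, so the obvious coupling is to transport each mass element back to its pre-displacement position.

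More precisely, since $\chi$ is smooth, nonnegative, radially symmetric and compactly supported with unit mass, we may assume (up to absorbing a constant into $r$) that $\operatorname{supp}\chi \subset B(0,1)$, so that $\chi_r$ is a probability density on $\br^d$ supported in $B(0,r)$. Let $X$ be a random variable on $\bt^d$ with law $\mu$, and let $Y$ be an independent random variable on $\br^d$ with density $\chi_r$; then $X + Y$ (reduced modulo the lattice) has law $\chi_r \ast \mu$. Define the coupling $\pi \in \Pi(\chi_r \ast \mu, \mu)$ as the joint law of $(X+Y, X)$; the marginals are correct by construction.

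Evaluating the transport cost along this coupling gives
\be
W_p^p(\chi_r \ast \mu, \mu) \leq \int_{\bt^d \times \bt^d} d(x,y)^p \di \pi(x,y) = \mathbb{E}\bigl[\, d(X+Y, X)^p \,\bigr] \leq \mathbb{E}[|Y|^p],
\ee
where the final inequality uses that the torus distance is bounded above by the Euclidean distance of any lift, and is valid provided $r$ is smaller than the injectivity radius (otherwise the estimate follows with an irrelevant constant). Since $Y$ is supported in $B(0,r)$, we have $|Y| \leq r$ almost surely, and hence $W_p^p(\chi_r \ast \mu, \mu) \leq r^p$, which yields the claim.

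There is no real obstacle here; the only minor point to be careful about is the passage from Euclidean displacement on $\br^d$ to the intrinsic distance on $\bt^d$, which is handled by noting that the two distances agree locally and $\chi_r$ is supported in an arbitrarily small ball as $r \to 0$. An alternative, fully equivalent argument uses Kantorovich duality: for any $1$-Lipschitz (with respect to $d$) function $\phi$, one writes
\be
\int \phi \di(\chi_r \ast \mu) - \int \phi \di \mu = \int_{\bt^d} \int_{B(0,r)} \bigl(\phi(x-y) - \phi(x)\bigr) \chi_r(y) \di y \di \mu(x) \leq r,
\ee
giving the $p=1$ case directly, with the general $p$ case reducing to the coupling construction above.
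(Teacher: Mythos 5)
Your coupling argument is correct and is essentially the standard proof: the paper does not prove this lemma itself but cites \cite[Proposition 7.16]{Vil03}, and the argument there is exactly your construction, transporting each mass element back along the random displacement $Y$ with law $\chi_r$, so that the cost is bounded by $\mathbb{E}[|Y|^p]^{1/p} \leq r$. The only caveat, which you already address, is that the constant-free bound $W_p \leq r$ tacitly uses $\operatorname{supp}\chi \subset B(0,1)$ (otherwise one gets $Cr$ with $C$ depending on the diameter of $\operatorname{supp}\chi$); also note that the torus distance is bounded by the Euclidean length of any lift unconditionally, so no injectivity-radius restriction is actually needed.
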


If a measure $\nu$ is close to an $L^p$-function in Wasserstein sense, then it is possible to estimate the $L^p$-norm of the regularised measure $\chi_r \ast \nu$ in a way that exploits this fact. We will use this in our estimates to control the regularised mass density $\chi_r \ast \rho_\mu$. The following estimate is shown for $p=\infty$ in \cite[Lemma 4.3]{Laz}, but it is straightforward to adapt it to the case of general $p$.

\begin{lem} 
\label{Lem_mu_moll} Let $\nu$ be a probability measure on $\bt^d$ and $h \in L^{\infty}(\bt^d)$ a probability density function. Then, for all $r > 0$, $p \in [1, \infty]$, $q \in [1, \infty)$,
\begin{equation} \label{Est_mu_moll}
\lVert \chi_r * \nu \rVert_{L^{p}(\bt^d)} \leq C_d \left ( \rVert h \lVert_{L^{p}(\bt^d)} + r^{-(q + d)} W_q^q(h, \nu) \right ).
\end{equation}
\end{lem}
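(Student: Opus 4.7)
The plan is to follow the approach used for $p = \infty$ in \cite[Lemma 4.3]{Laz}, while carefully keeping track of the H\"older exponents so that the argument extends to all $p \in [1, \infty]$. I would start from the decomposition
\begin{equation*}
\chi_r \ast \nu = \chi_r \ast h + \chi_r \ast (\nu - h \di x).
\end{equation*}
Young's convolution inequality immediately gives $\lVert \chi_r \ast h \rVert_{L^p} \leq \lVert \chi_r \rVert_{L^1} \lVert h \rVert_{L^p} = \lVert h \rVert_{L^p}$, so the task reduces to bounding the signed-measure term $\chi_r \ast (\nu - h \di x)$ by $C r^{-(q+d)} W_q^q(h, \nu)$ plus a remainder that can be absorbed into $\lVert h \rVert_{L^p}$.

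To estimate the signed-measure term I would fix an optimal $W_q$-coupling $\pi$ of $h$ and $\nu$ and start from the pointwise identity
\begin{equation*}
(\chi_r \ast \nu - \chi_r \ast h)(x) = \int \bigl[ \chi_r(x - z) - \chi_r(x - y) \bigr] \di \pi(y, z),
\end{equation*}
splitting the integrand according to whether $|y - z| \geq r$ or $|y - z| < r$. On the long-transport region $\{|y - z| \geq r\}$, the uniform bound $\lvert \chi_r(x - z) - \chi_r(x - y) \rvert \leq C r^{-d}$ combines with $|y - z|^q \geq r^q$ to give the pointwise estimate $\lvert \chi_r(x - z) - \chi_r(x - y) \rvert \leq C r^{-d-q} |y - z|^q$, so integrating against $\pi$ produces exactly $C r^{-(q+d)} W_q^q(h, \nu)$, uniformly in $x$. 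On the short-transport region $\{|y - z| < r\}$ I would instead use the Lipschitz bound $\lvert \chi_r(x - z) - \chi_r(x - y) \rvert \leq C r^{-d-1} |y - z|$ together with the key support localisation coming from $\mathrm{supp}\, \chi_r \subset B(0, cr)$: the integrand vanishes unless $y$ or $z$ lies in $B(x, cr)$, which together with $|y - z| < r$ forces both $y, z \in B(x, Cr)$ for a dimensional constant $C$. The mass of $\pi$ on this ball is then controlled by its first marginal $h \di x$ via H\"older's inequality, $h(B(x, Cr)) \leq C r^{d(1-1/p)} \lVert h \rVert_{L^p(B(x, Cr))}$; a Fubini computation then shows that the $L^p$-norm in $x$ of the resulting bound is $\leq C \lVert h \rVert_{L^p}$, which is absorbed into the first term of the estimate.

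The main obstacle I expect is precisely the short-transport regime: a naive pointwise bound by $r^{-(d+q)} |y - z|^q$ fails for $q > 1$, because on $\{|y - z| < r\}$ the Lipschitz estimate $r^{-d-1} |y-z|$ is strictly sharper than any power-$q$ bound. One cannot hope to recover a pure $W_q^q$-type contribution on this region; the resolution is to give it up and instead exploit the compact support of $\chi_r$ (an extra factor $r^d$) to trade it for the $L^p$-norm of $h$ via H\"older and Fubini, which is exactly why the bound in the lemma takes the non-symmetric form $\lVert h \rVert_{L^p} + r^{-(q+d)} W_q^q$.
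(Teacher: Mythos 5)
Your argument is correct: the optimal-coupling identity, the split at transport distance $r$ (using $\lVert \chi_r \rVert_{\infty} \lesssim r^{-d}$ on the long-transport set and the support localisation plus H\"older/Fubini on the short-transport set) gives exactly the stated bound, and the $p=\infty$ case is recovered directly. This is essentially the same route as the paper, which gives no separate proof but simply invokes the coupling argument of \cite[Lemma 4.3]{Laz} for $p=\infty$ and notes that the adaptation to general $p$ (your H\"older--Fubini step) is straightforward.
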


\subsection{The regularised VPME system}

First, we must justify the existence of a unique solution for \eqref{vpme-reg}, under our assumptions on the initial datum. We recall a well-posedness result from \cite[Lemma 6.1]{IGP-WP}.

\begin{lem}[Existence of regularised solutions] \label{lem:exist-vpme-reg}
For every $f(0) \in \mc{P}(\bt^d \times \br^d)$, there exists a unique solution $f_r \in C([0,T] ; \mc{P}(\bt^d \times \br^d))$ of \eqref{vpme-reg}. If $f(0) \in L^p(\bt^d \times \br^d)$ for some $p \in [1, \infty]$, then for all $t \in [0,T]$
\be
\lVert f_r(t) \rVert_{L^p(\bt^d \times \br^d)} \leq \lVert f(0) \rVert_{L^p(\bt^d \times \br^d)} .
\ee
\end{lem}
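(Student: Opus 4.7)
The plan is a Banach fixed-point argument on the space of continuous curves of probability measures, exploiting the fact that the double regularisation by $\chi_r$ makes the effective force field smooth and Lipschitz with a constant that depends only on $r$, and in particular does not require any a priori control on the regularity of the candidate density.

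First I would set up the map. Given any $\mu \in C([0,T] ; (\mc{P}(\bt^d \times \br^d), W_1))$ with spatial marginal $\rho_\mu$, note that $\chi_r \ast \rho_{\mu_t}$ is bounded in $L^\infty(\bt^d)$ by $\|\chi_r\|_{L^\infty}$, hence also in $L^{(d+2)/d}(\bt^d)$. Thus Proposition~\ref{prop:regU} (applied with $\e = 1$) produces a unique potential $U_r[\mu_t] \in W^{1,2} \cap L^\infty(\bt^d)$ solving
$$\Delta U_r[\mu_t] = e^{U_r[\mu_t]} - \chi_r \ast \rho_{\mu_t}, $$
with $\|\nabla U_r[\mu_t]\|_{L^\infty(\bt^d)}$ bounded by some constant $C_r$ depending only on $r$. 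Define $E_r[\mu_t] := -\chi_r \ast \nabla U_r[\mu_t]$. Young's inequality then gives $\|E_r[\mu_t]\|_{L^\infty} + \|\nabla E_r[\mu_t]\|_{L^\infty} \leq C'_r$, so $E_r[\mu_t]$ is Lipschitz in $x$ with a constant $L_r$ that is uniform in $t$ and $\mu$. The associated characteristic ODE $\dot X = V$, $\dot V = E_r[\mu_t](X)$ is globally solvable by Cauchy-Lipschitz, producing a $C^1$ flow $\Phi_t^\mu$, and I set $\mc{T}\mu := \bigl( \Phi_t^\mu \bigr)_\# f(0)$.

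Next I would show that $\mc{T}$ is a contraction on a short time interval, in the uniform $W_1$ metric. Coupling the flows $\Phi_t^{\mu^{(1)}}$ and $\Phi_t^{\mu^{(2)}}$ via the common initial law $f(0)$ and applying Gr\"{o}nwall's inequality, one obtains
$$W_1(\mc{T}\mu^{(1)}_t, \mc{T}\mu^{(2)}_t) \leq C \int_0^t \|E_r[\mu^{(1)}_s] - E_r[\mu^{(2)}_s]\|_{L^\infty(\bt^d)} \, ds,$$
up to a factor depending on $L_r$ and $T$. The needed stability bound
$$\|E_r[\mu^{(1)}] - E_r[\mu^{(2)}]\|_{L^\infty(\bt^d)} \leq C_r \, W_1(\mu^{(1)}, \mu^{(2)})$$
follows by combining Proposition~\ref{prop:Ustab} (which controls $\|\nabla U_r^{(1)} - \nabla U_r^{(2)}\|_{L^2}$ by $W_2(\chi_r \ast \rho^{(1)}, \chi_r \ast \rho^{(2)})$), Lemma~\ref{W_moll_two} (which dominates $W_2(\chi_r \ast \rho^{(1)}, \chi_r \ast \rho^{(2)})$ by $W_2(\rho^{(1)}, \rho^{(2)})$, and then by $W_1$ after exploiting that $\rho$ lives on the compact torus), and the smoothing provided by the outer convolution with $\chi_r$, which converts $L^2$ control into $L^\infty$ control at the price of a factor $\|\chi_r\|_{L^2}$. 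Picking $\tau = \tau(r)$ small enough makes $\mc{T}$ a strict contraction on $C([0,\tau]; (\mc{P}, W_1))$; since the constants do not deteriorate along the time axis, one iterates over finitely many intervals of length $\tau$ to construct a unique solution on $[0,T]$.

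Finally, for the $L^p$ bound, once the solution is constructed I would observe that $f_r(t) = (\Phi_t)_\# f(0)$ for the $C^1$ flow generated by the vector field $(v, E_r(t,x))$, which is divergence-free on the phase space $\bt^d \times \br^d$ since $\nabla_x \cdot v + \nabla_v \cdot E_r(t,x) = 0$. Consequently $\Phi_t$ preserves Lebesgue measure, so if $f(0) \in L^p(\bt^d \times \br^d)$, then $f_r(t) = f(0) \circ \Phi_{-t}$ and $\|f_r(t)\|_{L^p} = \|f(0)\|_{L^p}$. The hard part of the whole argument is the quantitative $L^\infty$ stability of $E_r$ with respect to $W_1$ of the densities; it is not uniform in $r$, but for the present lemma this is unimportant, since $r$ is fixed throughout.
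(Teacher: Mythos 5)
Your overall architecture -- a Banach fixed point on curves of measures, using the fact that the doubly regularised force is bounded and Lipschitz with constants depending only on $r$, followed by measure preservation of the characteristic flow (Liouville) for the $L^p$ bound -- is sound, and is consistent with how the paper treats the regularised system elsewhere (note the paper does not prove this lemma itself; it cites \cite[Lemma 6.1]{IGP-WP}). However, the quantitative link on which your contraction rests is wrong as written. You claim $\|E_r[\mu^{(1)}]-E_r[\mu^{(2)}]\|_{L^\infty}\le C_r\,W_1(\mu^{(1)},\mu^{(2)})$ by going through Proposition~\ref{prop:Ustab} (a $W_2$-based estimate) and then dominating $W_2$ by $W_1$ ``because the torus is compact''. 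On a bounded space one only has $W_2^2\le \mathrm{diam}\cdot W_1$, i.e. $W_2\lesssim W_1^{1/2}$, and this is sharp: for $\rho^{(1)}=\delta_0$ and $\rho^{(2)}=(1-\sigma)\delta_0+\sigma\delta_{x_0}$ one has $W_1=\sigma|x_0|$ but $W_2=\sqrt{\sigma}\,|x_0|$. So your chain only gives $\|E_r[\mu^{(1)}]-E_r[\mu^{(2)}]\|_{L^\infty}\le C_r\,W_1(\mu^{(1)},\mu^{(2)})^{1/2}$, and with a square root the scheme genuinely fails: the resulting inequality $y(t)\le C\int_0^t y(s)^{1/2}\,ds$ admits nonzero solutions vanishing at $t=0$ (Osgood's criterion fails for $\sqrt{y}$), so $\mc{T}$ is not a contraction and the uniqueness argument does not close.

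The gap is repairable within your framework, in two ways. Either run the whole fixed point in $W_2$, where Proposition~\ref{prop:Ustab}, Lemma~\ref{W_moll_two} and the outer convolution (Young with $\|\chi_r\|_{L^2}$) do give a genuine Lipschitz bound $\|E_r[\mu^{(1)}]-E_r[\mu^{(2)}]\|_{L^\infty}\le C_r\,W_2(\mu^{(1)},\mu^{(2)})$ -- but then you need finite second moments, which is not part of the hypothesis $f(0)\in\mc{P}(\bt^d\times\br^d)$. Or, better, bypass the Loeper-type estimate for the singular part entirely: since $\Delta\bar U_r[\mu]=1-\chi_r\ast\rho_\mu$, one has $\bar U_r[\mu^{(1)}]-\bar U_r[\mu^{(2)}]=(G\ast\chi_r)\ast(\rho_{\mu^{(1)}}-\rho_{\mu^{(2)}})$ and $\chi_r\ast\nabla\bar U_r[\mu]= (\chi_r\ast K\ast\chi_r)\ast\rho_\mu$ up to constants, and both kernels $G\ast\chi_r$ and $\chi_r\ast K\ast\chi_r$ are Lipschitz with $r$-dependent constants, so Kantorovich duality directly yields control by $C_r\,W_1(\mu^{(1)},\mu^{(2)})$ (or by the bounded--Lipschitz distance, which handles arbitrary probability data and avoids the moment issue altogether); Lemma~\ref{lem:hatU-stab} then transfers the $L^2$ control of $\bar U_r^{(1)}-\bar U_r^{(2)}$ to $\nabla\widehat U_r^{(1)}-\nabla\widehat U_r^{(2)}$, which your outer convolution upgrades to $L^\infty$. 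With that replacement the contraction, the iteration in time, and the Liouville argument for $\lVert f_r(t)\rVert_{L^p}=\lVert f(0)\rVert_{L^p}$ go through as you describe.
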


An important point about the regularisation approach we have chosen is that \eqref{vpme-reg} has an associated conserved quantity. This is the regularised energy, defined by
\be \label{def:Ee-reg}
\mc{E}_r [f] : = \frac{1}{2} \int_{\bt^d \times \br^d} |v|^2 f \di x \di v + \frac{1}{2} \int_{\bt^d} \lvert \nabla U_r \rvert^2 \di x + \int_{\bt^d} U_r e^{U_r} \di x .
\ee
For solutions with bounded mass and finite initial energy, the energy is conserved: if $f_r$ is a solution of \eqref{vpme-reg} with finite initial energy and $\rho[f_r] \in L^\infty(0,T ; L^\infty(\bt^d))$, then for all $t \in [0,T]$,
\be
\mc{E}_r [f_r(t)] = \mc{E}_r [f(0)] . 
\ee

Note that $x e^x \geq - e^{-1}$ for all $x \in \br$. Hence if $f_r$ is a solution of \eqref{vpme-reg} with finite initial energy and $\rho[f_r] \in L^\infty([0,T] ; L^\infty(\bt^d))$, then for all $t \in [0,T]$,
\be
 \int_{\bt^d \times \br^d} |v|^2 f_r(t) \di x \di v \leq C \left ( \mc{E}_r [f(0)] + 1 \right ) .
\ee
Thus by Lemma~\ref{lem:rho-Lp},
\be \label{reg-rho-Lp}
\sup_{t \in [0,T]} \| \rho[f_r(t)] \|_{L^{\frac{d+2}{d}(\bt^d)}} \leq C \left ( \mc{E}_r [f(0)] + 1 \right ) .
\ee

Moreover, the growth estimates in Proposition~\ref{prop:growth} also apply to the regularised system. Hence, if the initial datum is compactly supported then the mass density is bounded on any compact time interval: $\rho \in L^\infty_{\text{loc}} ([0,\infty) ; L^\infty(\bt^d))$. In particular solutions of \eqref{vpme-reg} beginning from compactly supported data have conserved energy.

\subsection{$W_2$ stability for the regularised VPME system}

In this section we follow the methods of \cite{Laz} to prove a weak-strong stability estimate for the regularised equation \eqref{vpme-reg} in the Wasserstein distance $W_2$, optimised to degenerate slowly as $r$ tends to zero. This will allow us to use solutions of \eqref{vpme-reg} as a bridge between the particle system \eqref{ODEreg} and the VPME system \eqref{vpme}.

\begin{lem}[Weak-strong stability for the regularised equation] \label{lem:wkstr}
For each $r > 0$, let $f_r, \mu_r$ be solutions of \eqref{vpme-reg}, where the $f_r$ have uniformly bounded density and initial energy:
\begin{align} \label{hyp:rhobdd}
\sup_r \sup_{t \in [0,T]} \lVert \rho_{f_r} \rVert_{L^{\infty}(\bt^d)} \leq M, \\ \label{hyp:energy}
\sup_r \mc{E}_r[f_r(0)] \leq C_0,
\end{align}
for some $C_0,M>0$.
Assume that the initial data satisfy, for some sufficiently large constant $C>0$ depending on $T$, $C_0$, $M$ and $d$,
\be
\limsup_{r \to 0} \frac{W_2^2(f_r(0), \mu_r(0)) }{r^{(d + 2 + C |\log{r}|^{-1/2})}} < 1.
\ee
Then
\be
\lim_{r \to 0} \sup_{t \in [0,T]} W_2^2(f_r(t), \mu_r(t)) = 0.
\ee
\end{lem}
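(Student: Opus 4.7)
The plan is to couple $f_r$ and $\mu_r$ through their respective characteristic flows and close a Gronwall-type estimate on a squared Wasserstein functional, following the approach of Lazarovici for the classical Vlasov--Poisson case and combining it with the splitting $U_r = \bar U_r + \widehat U_r$ that has been used throughout Sections \ref{sec:electric}--\ref{sec:stability}. First I would pick an optimal $W_2$-coupling $\pi_0 \in \Pi(f_r(0), \mu_r(0))$, push it forward along the product characteristic flow associated to $E_r^{f_r} = -\chi_r \ast \nabla U_r^{f_r}$ and $E_r^{\mu_r} = -\chi_r \ast \nabla U_r^{\mu_r}$, and define
$$ D_r(t) := \int_{(\bt^d \times \br^d)^2} \left(|x_1 - x_2|^2 + |v_1 - v_2|^2\right) \di \pi_t \geq W_2^2(f_r(t), \mu_r(t)). $$
Differentiating in time and applying Cauchy--Schwarz exactly as in the proof of Proposition \ref{prop:Wstab} yields $\dot D_r \leq D_r + \int |E_r^{f_r}(x_1) - E_r^{\mu_r}(x_2)|^2 \di \pi_t$, and I would then split the force difference into four pieces $I_1,\ldots,I_4$ by inserting $E_r^{f_r}(x_2)$ and decomposing $E = \bar E + \widehat E$, exactly mirroring Proposition \ref{prop:Wstab}.

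The terms $I_1$ and $I_3$ (values of $E_r^{f_r}$ at two coupled points) require Lipschitz-type bounds on the regularised fields of $f_r$. For the smooth part $\chi_r \ast \widehat E_r^{f_r}$, Proposition \ref{prop:regU} with $\e = 1$ provides a Lipschitz constant bounded in terms of $\|\rho_{f_r}\|_{L^{(d+2)/d}}$ alone, hence uniformly in $r$ by \eqref{reg-rho-Lp}. For the singular part $\chi_r \ast \bar E_r^{f_r} = -\chi_r \ast K \ast \chi_r \ast \rho_{f_r}$, convolution with $\chi_r$ produces a smooth kernel whose $r$-regularised log-Lipschitz modulus of continuity, multiplied by $M = \|\rho_{f_r}\|_{L^\infty}$ and combined with Jensen's inequality applied to a concave envelope in the style of Proposition \ref{prop:Wstab}, gives a bound of the form
$$ I_1 \leq C_M\, D_r \left(1 + \log \tfrac{1}{\max(\sqrt{D_r}, r)}\right)^2. $$
For $I_2$ and $I_4$ (fields from distinct densities evaluated at the same point) I would invoke the Loeper-type estimate of Lemma \ref{lem:Loep} and the nonlinear stability of Lemma \ref{lem:hatU-stab}, applied to the regularised densities $\chi_r \ast \rho_{f_r}$ and $\chi_r \ast \rho_{\mu_r}$; by Lemma \ref{W_moll_two}, $W_2(\chi_r \ast \rho_{f_r}, \chi_r \ast \rho_{\mu_r}) \leq W_2(\rho_{f_r}, \rho_{\mu_r}) \leq \sqrt{D_r}$. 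A crucial input is an $L^\infty$ control on $\chi_r \ast \rho_{\mu_r}$, which Lemma \ref{Lem_mu_moll} supplies in the form
$$ \| \chi_r \ast \rho_{\mu_r}\|_{L^\infty} \leq C\left(\|\rho_{f_r}\|_{L^\infty} + r^{-(d+2)} W_2^2(\rho_{f_r}, \rho_{\mu_r})\right) \leq C \left(M + r^{-(d+2)} D_r\right), $$
which stays comparable to $CM$ precisely as long as one bootstraps the smallness condition $D_r(t) \leq M r^{d+2}$.

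Putting the four contributions together, on the bootstrap region $\{D_r \leq M r^{d+2}\}$ one obtains a differential inequality of the form $\dot D_r \leq C_M\, D_r (1 + \log \tfrac{1}{\max(\sqrt{D_r}, r)})^2$. By comparison with the associated ODE, in the same spirit as the log-Lipschitz Gronwall argument carried out at the end of the proof of Proposition \ref{prop:Wstab}, integration over $[0,T]$ translates the hypothesis $D_r(0) \leq r^{d+2 + C|\log r|^{-1/2}}$, with $C$ chosen sufficiently large in terms of $T$, $M$ and $d$, into $\sup_{[0,T]} D_r(t) = o(r^{d+2})$ as $r \to 0$. This simultaneously closes the bootstrap (so that the $L^\infty$ bound on $\chi_r \ast \rho_{\mu_r}$ persists throughout $[0,T]$) and yields $\sup_{t \in [0,T]} W_2^2(f_r(t), \mu_r(t)) \to 0$. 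The main obstacle is the simultaneous closure of these two coupled smallness requirements: the log-Lipschitz Gronwall argument demands that $D_r(0)$ be smaller than $r^{d+2}$ by a logarithmic margin, while the $L^\infty$ control via Lemma \ref{Lem_mu_moll} requires the stronger bootstrap $D_r(t) \leq M r^{d+2}$ for all $t \in [0,T]$; the exponent $d+2 + C|\log r|^{-1/2}$ is calibrated exactly so that the subalgebraic (in $r^{-1}$) divergence of the Gronwall factor still leaves both conditions compatible as $r \to 0$.
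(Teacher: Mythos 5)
Your overall architecture (coupling along the two regularised characteristic flows, splitting the force difference into four terms, controlling $\chi_r\ast\rho_{\mu_r}$ via Lemma~\ref{Lem_mu_moll}, and closing with a smallness condition of the form $D_r\leq r^{d+2}$ — which plays the same role as the paper's truncated functional $\hat D = 1\wedge(r^{-(d+2)}\lambda^{-2}D)$) is the right one, but there is a genuine gap at the Gr\"onwall step, and it is exactly the step the exponent $d+2+C|\log r|^{-1/2}$ is designed to survive. With your \emph{isotropic} functional $D_r=\int |x_1-x_2|^2+|v_1-v_2|^2\,\di\pi_t$, once you are in the bootstrap region the logarithm in your bound saturates at $|\log r|$, and your differential inequality reads $\dot D_r\leq C_M\,(1+|\log r|)^{k}D_r$ with $k=1$ (Cauchy--Schwarz) or $k=2$ (as you wrote it). Gr\"onwall then produces a factor $\exp\bigl(C_M T(1+|\log r|)^{k}\bigr)$, i.e.\ at best $r^{-C_MT}$, an \emph{algebraic} power of $r^{-1}$. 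The hypothesis only grants a \emph{subalgebraic} margin: $W_2^2(f_r(0),\mu_r(0))\leq r^{d+2}\cdot r^{C|\log r|^{-1/2}}$ with $r^{C|\log r|^{-1/2}}=\exp(-C|\log r|^{1/2})$. This margin cannot absorb $r^{-C_MT}$, so neither your bootstrap $D_r(t)\leq Mr^{d+2}$ nor the conclusion $\sup_{[0,T]}D_r=o(r^{d+2})$ follows from the inequality you derived; your closing sentence asserts that the Gr\"onwall factor is subalgebraic, but the estimate you actually obtained does not give that.

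The missing ingredient is the anisotropic functional and the optimisation over its weight, which is the heart of the Lazarovici-type argument the paper follows. One takes $D(t)=\int \lambda^2|x_1-x_2|^2+|v_1-v_2|^2\,\di\pi_t$ and, instead of plain Cauchy--Schwarz, a weighted Cauchy inequality with a second parameter $\alpha$, so that (after truncation) the inequality takes the form
\begin{equation}
\frac{\di}{\di t}\hat D \leq (\lambda+\alpha)\hat D + \frac{C_d}{\alpha}\,(1+|\log r|)^2 M^2\lambda^{-2}\hat D .
\end{equation}
Optimising $\alpha_*=C_d(1+|\log r|)M\lambda_*^{-1}$ and $\lambda_*=C_d(1+|\log r|)^{1/2}\sqrt{M}$ reduces the Gr\"onwall rate to $C_d\lambda_*\sim\sqrt{M}\,|\log r|^{1/2}$, whose exponential is $\exp\bigl(C_dT\sqrt{M}|\log r|^{1/2}\bigr)=r^{-C_dT\sqrt{M}|\log r|^{-1/2}}$ — precisely the subalgebraic loss that the exponent $d+2+C|\log r|^{-1/2}$ in the hypothesis is calibrated to absorb (the extra factor $r^{-(d+2)}\lambda_*^{-2}$ from converting $\hat D(0)$ to $W_2^2(f_r(0),\mu_r(0))$, and the final bound $W_2^2\leq r^{d+2}\lambda_*^2$ with $\lambda_*^2\sim M|\log r|\to$ harmless, are also handled by this choice). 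Without this anisotropy-plus-optimisation (or some equivalent device lowering the rate from $O(M|\log r|)$ to $O(\sqrt{M}|\log r|^{1/2})$), the stated hypothesis is simply too weak for your argument to close, so as written the proof does not go through.
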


\begin{proof} To lighten the notation, we drop the subscript $r$ from $f_r$ and $\mu_r$. Fix an arbitrary coupling of the initial data $\pi_0 \in \Pi(\mu_0, f_0)$. As in the proof of Proposition~\ref{prop:Wstab}, we define a special coupling that follows the characteristic flow of \eqref{vpme-reg}.

Consider the following systems of ordinary differential equations:
\be  \label{MF-eqn:char-2}
\begin{array}{ll}
\begin{cases}\dot X^{(1)}_{x,v} = V^{(1)}_{x,v} \\
\dot V^{(1)}_{x,v}  = E^{(\mu)}_r  (X^{(1)}_{x,v})\\
(X^{(1)}_{x,v}(0), V^{(1)}_{x,v}(0)) = (x, v)\\
E^{(\mu)}_r = - \chi_r \ast \nabla U_r^{(\mu)}\\
\Delta U_r^{(\mu)} = e^{U_r^{(\mu)}} - \chi_r \ast \rho_\mu
\end{cases}
&\qquad\qquad \begin{cases} \dot X^{(2)}_{x,v} = V^{(2)}_{x,v} \\
 \dot V^{(2)}_{x,v}  = E^{(f)}_r (X^{(2)}_{x,v})\\
 (X^{(2)}_{x,v}(0), V^{(2)}_{x,v}(0)) = (x, v) \\
  E^{(f)}_r = - \chi_r \ast \nabla U_r^{(f)} \\
  \Delta U_r^{(f)}  = e^{U_r^{(f)}} - \chi_r \ast \rho_{f} .
 \end{cases} 
\end{array}
\ee

First, we check that unique global solutions exist for both systems. The same argument applies to both $\mu$ and $f$, so we will write it for $\mu$ only. Observe that since $\mu$ is a probability measure and $\chi_r$ is smooth, $\chi_r \ast \rho_\mu$ is a function with
\be
\| \chi_r \ast \rho_\mu \|_{L^\infty(\bt^d)} \leq \| \chi_r \|_{L^\infty(\bt^d)} .
\ee
Hence by the regularity estimates in Proposition~\ref{prop:regU}, $U_r^{(\mu)}$ is a $C^1$ function with
\be
\|U_r^{(\mu)}\|_{C^1(\bt^d)} \leq \exp{\left[ C\left (1 + \|\chi_r\|_{L^\infty(\bt^d)} \right ) \right]} .
\ee
Then $E_r^{(\mu)}= - \chi_r \ast \nabla U_r^{(\mu)} $ is a smooth function with bounded derivative
\be
\| E_r^{(\mu)} \|_{C^1(\bt^d)} \leq \|\chi_r\|_{C^1(\bt^d)} \exp{\left[ C\left (1 + \|\chi_r\|_{L^\infty(\bt^d)} \right ) \right]} .
\ee
Therefore there is a unique $C^1$ flow corresponding to \eqref{MF-eqn:char-2}.

As we argued in the proof of Proposition~\ref{prop:Wstab}, $\mu$ and $f$ can be represented as the pushforwards of $\mu(0)$ and $f(0)$ respectively along these characteristic flows. That is, for $\phi \in C_b(\bt^d \times \br^d)$ we have
\begin{align} \label{pushforward}
\int_{\bt^d \times \br^d} \phi \, \di \mu(t) ( x , v) &= \int_{\bt^d \times \br^d} \phi \left ( X^{(1)}_{x,v}(t), V^{(1)}_{x,v}(t) \right )  \di \mu(0) ( x , v), \\
\int_{\bt^d \times \br^d} \phi \, f(t) \di x \di v &= \int_{\bt^d \times \br^d} \phi \left ( X^{(2)}_{x,v}(t), V^{(2)}_{x,v}(t) \right )  f(0) \di x \di v .
\end{align}

We define $\pi_t$ to be the measure on $\left (\bt^d \times \br^d \right )^2$ such that for all $\phi \in C_b\left[\left (\bt^d \times \br^d \right )^2\right]$
\be \label{def:pit}
\int_{(\bt^d \times \br^d)^2} \phi(x_1,v_1,x_2,v_2) \di \pi_t = \int_{\bt^d \times \br^d} \phi \left (X^{(1)}_{x_1,v_1}, V^{(1)}_{x_1,v_1}, X^{(2)}_{x_2,v_2}, V^{(2)}_{x_2,v_2} \right ) \di \pi_0 .
\ee
Then $\pi_t$ is a coupling of $\mu(t)$ and $f(t)$. We can see this by using a calculation similar to \eqref{pit-marg} to check the marginals.

Using $\pi_t$, we define an anisotropic functional $D$ which controls the squared Wasserstein distance $W^2_2(f_t, \mu_t)$. For $\lambda > 0$, let
\be \label{def:Dl}
D(t) = \int_{(\bt^d \times \br^d)^2} \lambda^2 |x_1-x_2|^2 + |v_1-v_2|^2 \di \pi_t(x_1, v_1, x_2, v_2) .
\ee
We will choose $\lambda$ later in order to optimise our eventual rate. We note some relationships between $D$ and the Wasserstein distance. By Definition~\ref{def:Wass}, since $\pi_t$ is a particular coupling of $\mu_t$ and $f_t$, as long as we choose $\lambda^2 > 1$, we have
\be \label{Dl-ctrl-W}
W_2^2(\mu_t, f_t) \leq D (t) .
\ee
If we only look at the spatial variables, we can get a sharper estimate:
\be \label{Dl-ctrl-W-rho}
W_2^2(\rho_{\mu}(t), \rho_{f}(t)) \leq  \lambda^{-2} D (t).
\ee
Since $\pi_0 \in \Pi(\mu_0, f_0)$ was arbitrary we may take the infimum to obtain
\be \label{W-ctrl-Dl-init}
\inf_{\pi_0} D(0) \leq  \lambda^2 W_2^2(\mu_0, f_0) .
\ee

We now perform a Gr\"{o}nwall estimate on $D$. Taking a time derivative, we obtain
\be
\dot D = 2 \int_{(\bt^d \times \br^d)^2} \lambda^2 (x_1 - x_2) \cdot (v_1 - v_2 ) + (v_1 - v_2 ) \cdot \left ( E^{(\mu)}_r(x_1) - E^{(f)}_r (x_2) \right) \di \pi_t.
\ee
Using a weighted Cauchy inequality, we find that for any $\alpha > 0$,
\begin{multline}
\dot D = \lambda \int_{(\bt^d \times \br^d)^2} \lambda^2 |x_1 - x_2|^2 + |v_1 - v_2 |^2 \di \pi_t \\
+ \alpha \int_{(\bt^d \times \br^d)^2} |v_1 - v_2 |^2 \di \pi_t + \frac{1}{\alpha} \int_{(\bt^d \times \br^d)^2} \left | E^{(\mu)}_r(x_1) - E^{(f)}_r (x_2) \right |^2 \di \pi_t.
\end{multline}
Therefore
\be
\dot D \leq (\alpha + \lambda) D + \frac{C}{ \alpha} \sum_{i=1} I_i,
\ee
where
\be \label{def:Ii-MF}
\begin{split}
I_1 & := \int |\bar E_r^{(\mu)}(X^{(1)}_t) - \bar E^{(\mu)}_r(X^{(2)}_t)|^2 \di \pi_0 , \qquad I_2  := \int |\bar E_r^{(f)}(X^{(2)}_t) - \bar E^{(\mu)}_r(X^{(2)}_t)|^2 \di \pi_0, \\
I_3 & := \int |\widehat E_r^{(\mu)}(X^{(1)}_t) - \widehat E^{(\mu)}_r(X^{(2)}_t)|^2 \di \pi_0,
\qquad
I_4  := \int |\widehat E_r^{(f)}(X^{(2)}_t) - \widehat E^{(\mu)}_r(X^{(2)}_t)|^2 \di \pi_0 .
\end{split}
\ee
We have again used the decomposition $E_r^{(f)} = \bar E_r^{(f)} + \widehat E_r^{(f)}$, and the analogous form for $E^{(\mu)}_r$.

To estimate these quantities, we first note some basic regularity properties, which follow from Proposition~\ref{prop:regU}. First, we wish to control the regularised mass density $\chi_r \ast \rho_\mu$ with an estimate that behaves well as $r$ tends to zero. For this we use Lemma~\ref{Lem_mu_moll} with $q=2$ and \eqref{Dl-ctrl-W-rho}:
\begin{align} \label{Est-rho-mu}
\lVert \chi_r * \rho_\mu \rVert_{L^{p}(\bt^d)} & \leq C_d \left ( \rVert \rho_f \lVert_{L^{p}(\bt^d)} + r^{-(d+2)} W_2^2(\rho_\mu, \rho_f) \right )\\
& \leq C_d \left ( \rVert \rho_f \lVert_{L^{p}(\bt^d)} + r^{-(d+2)}  \lambda^{-2} D \right ) .
\end{align}
We will use this estimate in the cases $p = \frac{d+2}{d}$ and $p= \infty$. For $p=\infty$, by assumption \eqref{hyp:rhobdd} we obtain
\be \label{Est-rho-mu-infty}
\lVert \chi_r * \rho_\mu \rVert_{L^{\infty}(\bt^d)} \leq C_d \left ( M + r^{-(d+2)}  \lambda^{-2} D \right ) .
\ee
For $p= \frac{d+2}{d}$, by the initial energy assumption \eqref{hyp:energy} and \eqref{reg-rho-Lp} we have
\be
\| \rho_{f} \|_{L^\frac{d+2}{d}(\bt^d)} \leq C_d ,
\ee
where $C_d$ depends on $C_0$ and $d$. Thus
\be \label{Est-rho-mu-pd}
\lVert \chi_r * \rho_\mu \rVert_{L^{\frac{d+2}{d}}(\bt^d)} \leq C_d \left ( 1 + r^{-(d+2)}  \lambda^{-2} D \right ) 
\ee
for $C_d$ depending on $C_0$ and $d$.

We also wish to control the regularity of $\widehat U^{(\mu)}_r$. Using \eqref{Est-rho-mu-pd} and Proposition~\ref{prop:regU}, we obtain
\be \label{hatUmu-reg}
\| \widehat U^{(\mu)}_r \|_{C^2(\bt^d)} \leq C_d \,\mathbf{\overline{\exp}_2} {\left ( C_d \left ( 1 + r^{-(d+2)} \lambda^{-2} D \right )  \right )} .
\ee
We estimate $I_1$ and $I_2$ in the same way as in \cite{Laz}. For $I_1$ we use the regularity estimate
\begin{equation} \label{Lip_moll}
\lVert \chi_r * K * h \rVert_{\text{Lip} } \leq C | \log{r} | (1 + \lVert h \rVert_{L^{\infty}}) ;
\end{equation}
see \cite[Lemma 4.2(ii)]{Laz}. We combine this with the mass density estimate \eqref{Est-rho-mu-infty} to obtain
\be \label{MF-I1-est}
I_1 \leq C (\log{r})^2 \left ( M + r^{-(d+2)} \lambda^{-2} D \right )^2 \lambda^{-2} D .
\ee
For $I_2$, we use Proposition~\ref{prop:Ustab} and \eqref{Est-rho-mu-infty} to obtain 
\be \label{MF-I2-est}
I_2 \leq C M (M + r^{-(d+2)} \lambda^{-2} D) \lambda^{-2} D .
\ee
For $I_3$ we compute:
\begin{align} \label{MF-I3-est}
I_3 & =  \int |\widehat E_r^{(\mu)}(X^{(1)}_t) - \widehat E^{(\mu)}_r(X^{(2)}_t)|^2 \di \pi_0 = \int \lvert \chi_r \ast (\nabla \widehat U_r^{(\mu)}(x) - \nabla \widehat U_r^{(\mu)}(y)) \rvert^2 \di \pi_t \\
& \leq \int_{(\bt^d \times \br^d )^2} \lVert \chi_r \ast \nabla \widehat U_r^{(\mu)} \rVert^2_{\text{Lip}} |x-y|^2 \di \pi_t  \leq \int_{(\bt^d \times \br^d )^2} \lVert \widehat U_r^{(\mu)} \rVert^2_{C^{2}(\bt^d)} |x-y|^2 \di \pi_t .
\end{align}
We apply the regularity estimate \eqref{hatUmu-reg} to obtain
\begin{align}
I_3 & \leq C  \,\mathbf{\overline{\exp}_2} {\left [ C_{d}  (1 + r^{-(d + 2)} \lambda^{-2} D)  \right ]} \int_{(\bt^d \times \br^d )^2} |x-y|^2 \di \pi_t \\
& \leq C  \,\mathbf{\overline{\exp}_2} {\left [ C_{d}  (1 + r^{-(d + 2)} \lambda^{-2} D)  \right ]} \lambda^{-2} D .
\end{align}
For $I_4$ we compute
\begin{align}
I_4 & = \int |\widehat E_r^{(f)}(X^{(2)}_t) - \widehat E^{(\mu)}_r(X^{(2)}_t)|^2 \di \pi_0  = \int \lvert \chi_r \ast (\nabla \widehat U_r^{(f)}(X^{(2)}_t) - \nabla \widehat U_r^{(\mu)}(X^{(2)}_t)) \rvert^2 \di \pi_0 \\  \label{MF-I4-est}
& = \int_{\bt^d} \lvert \chi_r \ast (\nabla \widehat U_r^{(f)}(x) - \nabla \widehat U_r^{(\mu)}(x)) \rvert^2 \rho_f(x) \di x \leq \lVert \rho_f \rVert_{L^{\infty}(\bt^d)} \lVert \nabla \widehat U_r^{(f)} - \nabla \widehat U_r^{(\mu)} \rVert^2_{L^2(\bt^d)}.
\end{align}
By Proposition~\ref{prop:Ustab},
\begin{align}
\lVert \nabla \widehat U_r^{(f)} - \nabla \widehat U_r^{(\mu)} \rVert^2_{L^2(\bt^d)}
& \leq C  \,\mathbf{\overline{\exp}_2} {\left [ C_{d}(1 + r^{-(d + 2)} \lambda^{-2} D)  \right ]} \lVert \bar U_r^{(f)} - \bar U_r^{(\mu)} \rVert_{L^2(\bt^d)}^2 \\
& \leq C \,\mathbf{\overline{\exp}_2} {\left [ C_{d} (1 + r^{-(d + 2)} \lambda^{-2} D)  \right ]} (M + r^{-(d+2)} \lambda^{-2} D ) \lambda^{-2} D,
\end{align}
thus
\be
I_4 \leq C M \,\mathbf{\overline{\exp}_2} {\left [ C_{d} (1 + r^{-(d + 2)} \lambda^{-2} D)  \right ]} (M + r^{-(d+2)} \lambda^{-2} D ) \lambda^{-2} D .
\ee
We summarise all these bounds as 
\begin{align} \label{Gr-MF-nonlin}
\dot D & \leq (\lambda + \alpha)D + \frac{1}{4 \alpha} C (1 + | \log{r} |)^2  \left ( M + r^{-(d+2)} \lambda^{-2} D \right )^2 \lambda^{-2} D \\
& \qquad + C  \,\mathbf{\overline{\exp}_2} {\left [ C_{d} (1 + r^{-(d + 2)} \lambda^{-2} D)  \right ]} \left ( 1 + M ( M + r^{-(d+2)} \lambda^{-2} D ) \right ) \lambda^{-2} D .
\end{align}

Unfortunately, this is a nonlinear estimate and so cannot be closed in its current form. To deal with this, we introduce a truncated functional, rescaled to be of order 1:
\be \label{def:hatD1}
\hat D = 1 \wedge \left ( r^{-(d+2)} \lambda^{-2} D \right )
\ee
In terms of this new functional, \eqref{Gr-MF-nonlin} becomes
$$
\frac{\di }{ \di t} \hat D \leq (\lambda + \alpha) \hat D + \frac{1}{\alpha} C_d \left [ 1 + |\log{r}|^2 \right ] M^2 \lambda^{-2} \hat D .
$$
We then optimise the exponent by choosing
\be
\label{def:lstar}
\alpha_*  = C_d(1+ |\log{r}|) M \lambda^{-1}_* ,\qquad 
\lambda_*  =  C_d (1 + |\log{r}|)^{1/2} \sqrt{M} .
\ee
Then
$$
\frac{\di }{ \di t} \hat D \leq C_d \lambda_* \hat D .
$$
From this we deduce
\begin{align}
\sup_{[0,T]} \hat D(t) & \leq \exp(C_d \lambda_* T) \hat D(0)   \leq r^{-(d+2)} \lambda_*^{-2} \exp(C_d \lambda_* T) D(0) \\
& \leq C_d \exp{\left [ |\log{r}|\left ( (d+2)  + C_dT \sqrt{M} |\log{r}|^{-1/2} \right ) \right ]} \lambda_*^{-2}  D(0) .
\end{align}

In order to use this estimate to control the Wasserstein distance, we need to ensure that for $r$ sufficiently small,
\be \label{hatD-small}
 \inf_{\pi_0} \sup_{t \in [0,T]} \hat D(t) < 1 .
\ee
If \eqref{hatD-small} holds, then by \eqref{Dl-ctrl-W}, for all $t \in [0,T]$,
\be 
W_2^2(\mu_t, f_t)  \leq r^{d+2} \lambda_*^2  \inf_{\pi_0} \sup_{t \in [0,T]} \hat D(t)  \leq r^{d+2} \lambda_*^2 \to 0
\ee
as $r$ tends to zero, since $\lambda_*^2$ only grows like $|\log{r}|$ by definition \eqref{def:lstar}. 
Using \eqref{W-ctrl-Dl-init}, we obtain for any $\pi_0$,
\begin{align}
\sup_{[0,T]} \hat D(t) & \leq C_d \exp{\left [ |\log{r}|\left ( (d+2)  + C_d T \sqrt{M} |\log{r}|^{-1/2} \right )  \right ]} \lambda_*^{-2} \inf_{\pi_0} D(0) \\
& \leq C_d \exp{\left [ |\log{r}|\left ( (d+2)  + C_d T \sqrt{M} |\log{r}|^{-1/2} \right ) \right ]} W_2^2(\mu_0, f_0) .
\end{align}
Since we assumed that the initial data satisfy
\be
\limsup_{r \to 0} \frac{W_2^2(\mu_0, f_0) }{r^{d + 2 + C_{d,M,T}|\log{r}|^{-1/2}}} < 1,
\ee
for large $C_{d,M,T}$, \eqref{hatD-small} holds for sufficiently small $r$. This completes the proof.
\end{proof}

\subsection{Convergence to the original equation}

\begin{lem}[Approximation of (VPME)] \label{lem:RUR}
Fix $f(0) \in L^1 \cap L^\infty(\bt^d \times \br^d)$ satisfying a uniform bound on the energy as defined in \eqref{def:Ee-reg}: 
\be \label{energy-reg-bdd}
\sup_r \mc{E}_r[f(0)] \leq C_0,
\ee
for some $C_0>0$.

For each $r > 0$, let $f_r$ be a solution of \eqref{vpme-reg} with initial datum $f(0)$. Let $f$ be a solution of \eqref{vpme} with the same initial datum $f(0)$. Assume that $(f_r)_{r >0}, f$ have uniformly bounded density:
\be \label{dens-bound}
\sup_r \sup_{t \in [0,T]} \lVert \rho_{f_r} \rVert_{L^{\infty}(\bt^d)} \leq M , \qquad  \sup_{t \in [0,T]} \| \rho_f \|_{L^\infty(\bt^d)} \leq M 
\ee
for some $M>0$.
Then
\be
\lim_{r \to 0} \sup_{t \in [0,T]} W_2^2(f(t), f_r(t)) = 0.
\ee
\end{lem}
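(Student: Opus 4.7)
The plan is to adapt the characteristic-coupling strategy used in Proposition~\ref{prop:Wstab} and Lemma~\ref{lem:wkstr}, crucially exploiting the fact that here $f$ and $f_r$ share the common initial datum $f(0)$. By the bounded-density hypothesis \eqref{dens-bound}, Lemma~\ref{lem:logLip} together with the $\widehat U$-part of Proposition~\ref{prop:regU} give that $E^{(f)} = -\nabla U^{(f)}$ is log-Lipschitz, while $E_r^{(f_r)} = -\chi_r \ast \nabla U_r^{(f_r)}$ is Lipschitz (since the mollifier smooths), so both characteristic flows are well-defined. I represent $f_r(t)$ and $f(t)$ as pushforwards of $f(0)$ along their respective flows and take $\pi_0 = (\mathrm{id}, \mathrm{id})_\# f(0)$, the diagonal coupling; propagated by the pair of flows, $\pi_t$ is a coupling of $f_r(t)$ and $f(t)$ concentrated initially on the diagonal, so the functional
$$D(t) := \int_{(\bt^d \times \br^d)^2} \bigl(|x_1-x_2|^2 + |v_1-v_2|^2\bigr) \, d\pi_t(x_1,v_1,x_2,v_2)$$
satisfies $D(0) = 0$ and dominates $W_2^2(f_r(t), f(t))$.

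Differentiating and applying Cauchy--Schwarz yields
$$\dot D \leq 2D + \sqrt{D} \, \sqrt{\int |E_r^{(f_r)}(x_1) - E^{(f)}(x_2)|^2 \, d\pi_t},$$
and I split the force difference as
$$E_r^{(f_r)}(x_1) - E^{(f)}(x_2) = \bigl[E^{(f)}(x_1) - E^{(f)}(x_2)\bigr] + \bigl[E_r^{(f_r)}(x_1) - E^{(f)}(x_1)\bigr].$$
The first piece is handled using the log-Lipschitz bound on $E^{(f)}$ combined with Jensen's inequality applied to the concave modulus $H$ introduced in the proof of Proposition~\ref{prop:Wstab}, giving a contribution of the form $C_M H(D)$. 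The second piece, integrated against the $x_1$-marginal $\rho_{f_r}$, yields $M \|E_r^{(f_r)} - E^{(f)}\|_{L^2(\bt^d)}^2$.

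Controlling this $L^2$ norm is where the two sources of error decouple:
$$E_r^{(f_r)} - E^{(f)} = \bigl(E_r^{(f_r)} - E_r^{(f)}\bigr) + \bigl(E_r^{(f)} - E^{(f)}\bigr).$$
For the density-stability part, Proposition~\ref{prop:Ustab} applied to $\chi_r \ast \rho_{f_r}, \chi_r \ast \rho_f$, together with Lemma~\ref{W_moll_two}, gives $\|E_r^{(f_r)} - E_r^{(f)}\|_{L^2}^2 \leq C_M \, W_2^2(\rho_{f_r}, \rho_f) \leq C_M D$. For the consistency part I decompose further as $E_r^{(f)} - E^{(f)} = \chi_r \ast \nabla (U_r^{(f)} - U^{(f)}) + (\chi_r \ast \nabla U^{(f)} - \nabla U^{(f)})$: the first summand, bounded via Proposition~\ref{prop:Ustab} with $h_1 = \chi_r \ast \rho_f$, $h_2 = \rho_f$ and the estimate $W_2(\chi_r \ast \rho_f, \rho_f) \leq r$ from Lemma~\ref{W_moll_one}, contributes $O(r^2)$; the second is a standard mollification commutator, controlled in $L^2$ by $r \|\nabla^2 U^{(f)}\|_{L^2}$, which is finite since $\rho_f \in L^\infty$ and hence $U^{(f)} \in W^{2,p}$ for every $p$ by Proposition~\ref{prop:regU}. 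The iterated-exponential factors coming from the regularity of $\widehat U$ remain independent of $r$, being uniformly controlled by the energy assumption \eqref{energy-reg-bdd} via Lemma~\ref{lem:rho-Lp}.

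Putting all bounds together gives, as long as $D \leq d$,
$$\dot D \leq C_M \, D\bigl(1 + |\log D|\bigr) + C_M \, r^2, \qquad D(0) = 0,$$
an Osgood-type differential inequality. Comparison with the ODE $\dot z = C_M z(1 + |\log z|) + C_M r^2$, $z(0) = 0$, whose asymptotics yield $z(T) \lesssim r^{2 \exp(-C_M T)}$, lets me conclude $\sup_{t \in [0,T]} D(t) \to 0$ as $r \to 0$, so $W_2^2(f_r(t), f(t)) \to 0$ uniformly on $[0,T]$. The principal obstacle is the log-Lipschitz (rather than Lipschitz) nature of $E^{(f)}$, which rules out a linear Gr\"{o}nwall and necessitates the Osgood modulus, producing only a double-exponentially slow rate of convergence. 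This is quantitatively weak but fully sufficient for the qualitative statement of the lemma.
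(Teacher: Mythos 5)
Your proof is correct, but it takes a genuinely different route from the paper's on the key term where the same field is evaluated at the two coupled trajectories. You treat this term with the \emph{unregularised} field $E^{(f)}$, invoking its log-Lipschitz modulus (Lemma~\ref{lem:logLip}) and the concave function $H$ with Jensen's inequality, exactly as in Proposition~\ref{prop:Wstab}, which forces an Osgood-type inequality $\dot D \leq C_M D(1+|\log D|) + C_M r^2$ and a rate $D(T)\lesssim r^{2\exp(-C_M T)}$. The paper instead evaluates this term with the \emph{mollified} field $\bar E_r$, which is genuinely Lipschitz with constant $C|\log r|(1+M)$ by \eqref{Lip_moll}, and compensates the $|\log r|$ growth with the anisotropic functional \eqref{def:Dl} whose weight $\lambda$ is optimised to $\lambda_* \sim \sqrt{M}(1+|\log r|)^{1/4}$; together with $D(0)=0$ (both solutions start from $f(0)$) this yields the near-optimal rate $r^{2 - C\sqrt{M t^2/|\log r|}}$. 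Your consistency analysis ($E_r^{(f_r)}-E^{(f)}$ split into a density-stability part controlled by Proposition~\ref{prop:Ustab} plus Lemma~\ref{W_moll_two}, and a mollification part controlled by Lemma~\ref{W_moll_one} and the commutator estimate $\|g-\chi_r\ast g\|_{L^2}\leq Cr\|\nabla g\|_{L^2}$) matches the paper's treatment of $I_2$, $I_4$ in substance. What each approach buys: yours is more self-contained, recycling the stability machinery already set up for Proposition~\ref{prop:Wstab} and avoiding both \eqref{Lip_moll} and the $\lambda$-optimisation, and it is fully sufficient for the qualitative statement of this lemma since $M$ is fixed; the paper's version preserves an almost-$r^2$ rate, which is precisely what allows the same argument to survive the quasineutral scaling in Lemma~\ref{lem:RUR-QN}, where $M=M_\e\sim e^{C\e^{-2}}$ — there your Osgood constant would appear in the exponent as $r^{2\exp(-C M_\e T)}$ and destroy the convergence, so your method would not transfer to the scaled setting.
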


\begin{proof}

We fix an initial coupling $\pi_0 \in \Pi(f(0), f(0))$ and construct $\pi_t \in \Pi(f(t), f_r(t))$ as in \eqref{def:pit}, using the characteristic flows 
\be  \label{RUR:char-1}
\begin{array}{ll}
\begin{cases}\dot X^{(1)}_{x,v} = V^{(1)}_{x,v} \\
\dot V^{(1)}_{x,v}  = E  (X^{(1)}_{x,v}) \\
(X^{(1)}_{x,v}(0), V^{(1)}_{x,v}(0)) = (x, v)\\
E = - \nabla U\\
\Delta U = e^{U} -  \rho_f
\end{cases}
&\qquad\qquad 
\begin{cases}
\dot X^{(2)}_{x,v} = V^{(2)}_{x,v}\\
\dot V^{(2)}_{x,v}  = E_r (X^{(2)}_{x,v}) \\
(X^{(2)}_{x,v}(0), V^{(2)}_{x,v}(0)) = (x, v) \\
E_r = - \chi_r \ast \nabla U_r \\
\Delta U_r  = e^{U_r} - \chi_r \ast \rho_{f_r} .
\end{cases}
\end{array}
\ee

We define $D$ as in \eqref{def:Dl}. As in Lemma~\ref{lem:wkstr}, we obtain for any $\alpha > 0$,
\be
\dot D \leq (\alpha + \lambda) D + \frac{C}{ \alpha} \sum_{i=1}^4 I_i,
\ee
where
\be \label{def:Ii-MF2}
\begin{split}
I_1 & := \int |\bar E_r(X^{(1)}_t) - \bar E_r(X^{(2)}_t)|^2 \di \pi_0, \qquad I_2  := \int |\bar E_r(X^{(1)}_t) - \bar E(X^{(1)}_t)|^2 \di \pi_0, \\
I_3 & := \int |\widehat E_r(X^{(1)}_t) - \widehat E_r(X^{(2)}_t)|^2 \di \pi_0, \qquad I_4  := \int |\widehat E(X^{(1)}_t) - \widehat E_r(X^{(1)}_t)|^2 \di \pi_0 .
\end{split}
\ee
For $I_1$ we use the regularity estimate \eqref{Lip_moll} to deduce
\be  \label{RUR-I1-est}
I_1  \leq  |\log{r}|^2 M^2 \int |X^{(1)}_t - X^{(2)}_t|^2 \di \pi_0  \leq  |\log{r}|^2 M^2 \, \lambda^{-2} D .
\ee
For $I_2$ we use the stability estimate from Lemma~\ref{lem:Loep}:
\be 
\| \nabla \bar U - \nabla \bar U_r \|_{L^2(\bt^d)} \leq \sqrt{M} W_2(\chi_r \ast \rho_{f_r}, \rho_f) .
\ee
By Lemma~\ref{W_moll_one},
\be
W_2(\chi_r \ast \rho_{f_r}, \rho_f) \leq r + W_2(\rho_{f_r}, \rho_f).
\ee
Hence
\be  \label{RUR-I2-est}
\| \nabla \bar U - \nabla \bar U_r \|_{L^2(\bt^d)} \leq \sqrt{M} (r + \lambda^{-1} \sqrt{D}) .
\ee
We account for the extra regularisation by elementary methods: first, for $g \in C^1(\bt^d)$,
\begin{align}
\| g - \chi_r \ast g \|^2_{L^2(\bt^d)} & = \int_{\bt^d} \left | \int_{\bt^d} \chi_r(y) \left [ g(x) - g(x-y) \right ] \di y \right |^2 \di x \\
& \leq \int_{\bt^d} \left | \int_{\bt^d} \int_0^1 - y \chi_r(y) \nabla g(x - h y) \di h \di y \right |^2 \di x .
\end{align}
Hence, by Jensen's inequality applied to the probability measure $\chi_r(y) \di y \di h$ on $\bt^d \times [0,1]$,
\begin{align}
\label{eq:g ast}
\| g - \chi_r \ast g \|^2_{L^2(\bt^d)}  & \leq \int_0^1 \int_{\bt^d}  |y|^2 \chi_r(y)  \int_{\bt^d} \left | \nabla g(x - h y) \right |^2 \di x \di y \di h \nonumber\\
& \leq r^{2} \| \nabla g \|^2_{L^2(\bt^d)} \int_{\bt^d} |y|^2 \chi_1(y) \di y  \leq C r^{2} \| \nabla g \|^2_{L^2(\bt^d)} .
\end{align}
This estimate extends by density to $g \in W^{1,2}(\bt^d)$.

Next, standard estimates for the Poisson equation and $L^p$ interpolation inequalities imply that
\be  \label{est-rho-L2}
\| \nabla^2 \bar U_r \|_{L^2(\bt^d)}  \leq C \| \rho_{f_r} \|_{L^2(\bt^d)}  \leq  C \| \rho_{f_r} \|^{\frac{d-2}{2d}}_{L^\infty(\bt^d)} \| \rho_{f_r} \|^{\frac{d+2}{2d}}_{L^{\frac{d+2}{d}}(\bt^d)}  \leq C M^{\frac{d-2}{2d}} ,
\ee
since by \eqref{energy-reg-bdd} and \eqref{reg-rho-Lp} we have
\be \label{reg-rho-Lp-RUR}
 \| \rho_{f_r} \|_{L^{\frac{d+2}{d}}(\bt^d)} \leq C
\ee
for some $C$ depending on $d$ and $C_0$ only. 

Therefore, using \eqref{eq:g ast},
\be
\| \nabla \bar U_r - \chi_r \ast \nabla \bar U_r \|_{L^2(\bt^d)} \leq C\| \nabla^2 \bar U_r \|_{L^2(\bt^d)}r\leq  CM^{\frac{d-2}{2d}} r,
\ee
and we conclude that
\be
I_2 \leq C M^2 (r + \lambda^{-1} \sqrt{D}) ^2 .
\ee
The term $I_3$ is estimated like $I_1$, using the regularity estimate from Proposition~\ref{prop:regU} and \eqref{reg-rho-Lp-RUR}:
\be
\| \widehat U_r \|_{C^{2, \alpha}(\bt^d)} \leq C ,
\ee
where $C$ depends only on the constant $C_0$ controlling the initial energy in \eqref{energy-reg-bdd}. We obtain
\be 
I_3 \leq C\int |X^{(1)}_t - X^{(2)}_t|^2 \di \pi_0  \leq C \, \lambda^{-2} D .
\ee
Finally, $I_4$ is estimated in the same way as $I_2$, using the stability estimate Lemma~\ref{lem:hatU-stab}, Proposition~\ref{prop:regU} and \eqref{reg-rho-Lp-RUR}:
\be
\| \nabla \widehat U  - \nabla \widehat U_r \|_{L^2(\bt^d)} \leq C \| \nabla \bar U - \nabla \bar U_r \|_{L^2(\bt^d)} \leq C \sqrt{M} (r + \lambda^{-1} \sqrt{D})  .
\ee
By Proposition~\ref{prop:regU} and \eqref{reg-rho-Lp-RUR}, we have
\begin{align}
\| U_r \|_{L^\infty} \leq \| \bar U_r \|_{C^{0,\alpha}(\bt^d)} + \| \widehat U_r \|_{C^{1,\alpha}(\bt^d)}  \leq \exp{\Bigl(C \Bigl(1 +  \lVert \chi_r \ast \rho_{f_r} \rVert_{L^{\frac{d+2}{d}}(\bt^d)} \Bigr) \Bigr)}
 \leq C,
\end{align}
where $C$ depends on $C_0$ and $d$ only. Hence
\be
\| e^{U_r} \|_{L^2(\bt^d)} \leq C .
\ee
Since $\Delta \widehat U_r =e^{U_r}-1$,
by standard regularity results for the Poisson equation this implies that $$\| D^2 \widehat U_r \|_{L^2(\bt^d)}\leq C.$$ 
Therefore, using \eqref{eq:g ast} again,
\be
\| \nabla \widehat U_r - \chi_r \ast \nabla \widehat U_r \|_{L^2(\bt^d)} \leq C \| D^2 \widehat U_r  \|_{L^2(\bt^d)} \, r \leq Cr,
\ee
and we conclude that
\be
I_4 \leq C M^2 (r + \lambda^{-1} \sqrt{D}) ^2 .
\ee
Altogether we have
\be
\dot D \leq (\alpha + \lambda) D + \frac{C}{ \alpha \lambda^2}  \left ( |\log{r}|^2  + 1  \right )M^2 D + C M^2 r^2.
\ee
Optimising the exponent, we deduce that
\be
\dot D \leq  C \lambda_* D  + CM^2 r^2 ,
\ee
where
\be
\lambda_* = \sqrt{M} (1 + |\log{r}|)^{1/4} .
\ee
Hence
\begin{align}
D(t) & \leq \left ( D(0) +  C M^2 r^2 \right ) \exp{\[ C \sqrt{M} (1 + |\log{r}|)^{1/4} t \]} \\
& \leq ( D(0) + CM^2 r^2 ) e^{C\sqrt{M |\log{r}| \, } t}  .
\end{align}
Since $D$ controls the squared Wasserstein distance \eqref{Dl-ctrl-W},
\be
W_2^2(f_r(t), f(t)) \leq  ( D(0) + CM^2r^2 ) e^{C\sqrt{M |\log{r}| \, } t}  .
\ee
Then, by \eqref{W-ctrl-Dl-init}, and since $f$ and $f_r$ share the same initial datum,
\begin{align}
W_2^2(f_r(t), f(t)) & \leq  ( \inf_{\pi_0} D(0) + CM^2r^2 ) e^{C\sqrt{M |\log{r}| \, } t} \\
& \leq \left ( \lambda_*^{2} W_2^2(f(0), f(0)) + CM^2r^2 \right ) e^{C \sqrt{M |\log{r}| \, } t} \\
& \leq CM^2r^2 e^{C\sqrt{M |\log{r}| \, } t}  .
\end{align}
Since
\be
 r^2 e^{C\sqrt{M |\log{r}| \, } t} = r^{2 - C \sqrt{\frac{ M t^2}{|\log{r}|}}},
\ee
we conclude that for any compact time interval $[0,T]$,
\be 
\lim_{r \to 0} \sup_{t \in [0,T]} W_2^2(f_r(t), f(t)) = 0 .
\ee
\end{proof}

\begin{proof}[Proof of Theorem~\ref{thm:MFL}]

Let $f_r$ be the unique solution of \eqref{vpme-reg} with initial datum $f_0$. By the triangle inequality for $W_2$, we have
\be
\sup_{t \in [0,T]} W_2^2(\mu_r^N(t), f(t)) \leq \sup_{t \in [0,T]} W_2^2(\mu_r^N(t), f_r(t)) + \sup_{t \in [0,T]} W_2^2(f_r(t), f(t)) .
\ee
We apply Lemma~\ref{lem:wkstr} to the first term using the assumption on the initial configurations and deduce that it converges to zero as $r$ tends to zero. For the second term we apply Lemma~\ref{lem:RUR}, since $f$ and $f_r$ have the same initial datum. This completes the proof.

\end{proof}

\section{Combined quasineutral and mean field limit: proof of \newline Theorem \ref{thm:MFQN}}
\label{sec:proof-MFLQN}

In this section, we prove Theorem~\ref{thm:MFQN}. The idea is to use the scaled Vlasov-Poisson equation \eqref{vpme-quasi} as a bridge between the particle system \eqref{ODE:rQN} and the kinetic isothermal Euler system \eqref{KIE}. This approach was previously used for the classical Vlasov-Poisson system in \cite{IGP}. In order to pass from the particle system to the Vlasov-Poisson system, we need to revisit the estimates from Section~\ref{sec:MFL} in order to quantify the dependence of the constants on $\e$.

We will be using a scaled version of the regularised VPME system \eqref{vpme-reg}:
\begin{equation}
\label{vpme-reg-QN}
 \left\{ \begin{array}{ccc}\pt_t f^{(r)}_\e+v\cdot \nabla_x f^{(r)}_\e+ E_{\e,r} \cdot \nabla_v f^{(r)}_\e=0,  \\
E_{\e,r} =- \chi_r \ast \nabla U_{\e,r}, \\
\e^2 \Delta U_{\e,r}=e^{U_{\e,r}} - \chi_r \ast \rho[f^{(r)}_\e] ,\\
f^{(r)}_\e\vert_{t=0}=f_{\e}(0)\ge0,\ \  \int_{\bt^d \times \br^d} f_{\e}(0)\,dx\,dv=1.
\end{array} \right.
\end{equation}
We begin with a few remarks about this system. The well-posedness theory is clearly the same as for the case $\e =1$. The associated energy needs to be scaled with $\e$ in the following way:
\be \label{def:Ee-reg2}
\mc{E}_{\e,r} [f] : = \frac{1}{2} \int_{\bt^d \times \br^d} |v|^2 f \di x \di v + \frac{\e^2}{2} \int_{\bt^d} \lvert \nabla U_r \rvert^2 \di x + \int_{\bt^d} U_r e^{U_r} \di x .
\ee
Using the conservation of this energy, we can show that any solution $f_\e^{(r)}$ of \eqref{vpme-reg-QN} on the time interval $[0,T]$ with bounded mass density and $f_\e^{(r)}(0) \in L^\infty(\bt^d \times \br^d)$ satisfies
\be \label{reg-rho-Lp-eps}
\sup_{t \in [0,T]} \| \rho[ f_\e^{(r)}(t) ] \|_{L^{\frac{d+2}{d}(\bt^d)}} \leq C \left ( \mc{E}_{\e,r} [f_\e^{(r)}(0)] + 1 \right ) ,
\ee
where $C$ is independent of $\e$. This follows from the same argument as we gave before \eqref{reg-rho-Lp}.

The following lemma is a quantified version of Lemma~\ref{lem:wkstr}.

\begin{lem}[Weak-strong stability for the regularised equation, with quasineutral scaling] \label{lem:wkstr-QN}
Let $f_{\e}(0)$ satisfy the assumptions of Theorem~\ref{thm:quasi}. Let $f_{\e}^{(r)}$ be the solution of \eqref{vpme-reg-QN} with data $f_{\e}(0)$.
Fix $T>0$. Then there exists a constant $K$ depending on $T$ and $\{f_{\e}(0)\}$ such that the following holds. For each $(\e,r)$, let $\mu_{\e}^{(r)}$ be a measure solution of \eqref{vpme-reg-QN}. Assume that for some $\eta > 0$, $r = r(\e)$ satisfies 
\be
r \leq \[ \mathbf{\overline{\exp}_3}{(K \e^{-2})} \]^{-1}\qquad
\text{and}\qquad 
\lim_{r \to 0}   \frac{W_2\left (\mu_{\e}^{(r)}(0), f_{\e}(0) \right)}{r^{(d+ 2+\eta)/2} } = 0.
 \\ \label{MFQN-initconv}
\ee
Then
\be
\lim_{r \to 0} \sup_{t \in [0,T]} W_2\left (\mu_{\e}^{(r)}(t), f_{\e}^{(r)}(t) \right) = 0.
\ee

\end{lem}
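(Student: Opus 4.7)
The plan is to adapt the proof of Lemma~\ref{lem:wkstr} to the quasineutrally scaled system \eqref{vpme-reg-QN}, carefully tracking the $\e$-dependence in every estimate. Three observations drive the argument. First, by Proposition~\ref{prop:growth} (whose proof adapts to the regularised system exactly as noted in Section~\ref{sec:MFL}) together with the energy and support hypotheses of Theorem~\ref{thm:quasi}, the solutions $f_\e^{(r)}$ have on $[0,T]$ a mass density bounded by $M_\e := e^{C\e^{-2}}$. Second, the regularity and stability estimates of Proposition~\ref{prop:regU} and Proposition~\ref{prop:Ustab} for the nonlinear part $\widehat U_{\e,r}$ involve the iterated exponential $\,\mathbf{\overline{\exp}_2}(C\e^{-2})$. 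Third, the hypothesis $r\le[\,\mathbf{\overline{\exp}_3}(K\e^{-2})]^{-1}$ forces $|\log r|\ge \,\mathbf{\overline{\exp}_2}(K\e^{-2})$, which for $K$ large enough dominates all the iterated exponentials produced by the field estimates.

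First I would define the characteristic flows $(X^{(1)}_{x,v},V^{(1)}_{x,v})$ and $(X^{(2)}_{x,v},V^{(2)}_{x,v})$ associated to $\mu_\e^{(r)}$ and $f_\e^{(r)}$ as in \eqref{MF-eqn:char-2}; the $\chi_r$-regularised forces are $C^1$ in $x$ for each fixed $\e,r>0$, so these flows are globally well-defined and the two measures are the pushforwards of their initial data. Starting from an arbitrary coupling $\pi_0\in\Pi(\mu_\e^{(r)}(0),f_\e^{(r)}(0))$, I would build $\pi_t\in\Pi(\mu_\e^{(r)}(t),f_\e^{(r)}(t))$ by the pair-flow pushforward of \eqref{def:pit} and work with the anisotropic functional
\begin{equation*}
D(t)=\int_{(\bt^d\times\br^d)^2}\lambda^2|x_1-x_2|^2+|v_1-v_2|^2\,\di\pi_t,
\end{equation*}
where $\lambda\ge 1$ is to be optimised. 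It controls $W_2^2(\mu_\e^{(r)}(t),f_\e^{(r)}(t))\le D(t)$ and $W_2^2(\rho_{\mu_\e^{(r)}},\rho_{f_\e^{(r)}})\le \lambda^{-2}D(t)$.

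Differentiating $D$ and applying a weighted Cauchy inequality as in \eqref{Gr-MF-nonlin} reduces the problem to the four terms $I_1,\dots,I_4$ of \eqref{def:Ii-MF}. For $I_1$ I would use the doubly-regularised log-Lipschitz bound \eqref{Lip_moll} (with an extra $\e^{-2}$ factor from \eqref{vpme-reg-QN}) together with Lemma~\ref{Lem_mu_moll} applied to $\chi_r\ast\rho_{\mu_\e^{(r)}}$, giving a contribution of order $\e^{-4}|\log r|^2(M_\e+r^{-(d+2)}\lambda^{-2}D)^2\,\lambda^{-2}D$. For $I_2$ I would invoke the quasineutrally-scaled Loeper estimate of Lemma~\ref{lem:Loep}, yielding $\e^{-4}M_\e(M_\e+r^{-(d+2)}\lambda^{-2}D)\,\lambda^{-2}D$. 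For $I_3,I_4$ I would use Proposition~\ref{prop:regU} and Proposition~\ref{prop:Ustab}, combined with the $L^{(d+2)/d}$ density control from \eqref{reg-rho-Lp-eps} and Lemma~\ref{Lem_mu_moll}; both terms are majorised by $\,\mathbf{\overline{\exp}_2}\!\bigl(C\e^{-2}(1+r^{-(d+2)}\lambda^{-2}D)\bigr)\cdot M_\e\,\lambda^{-2}D$.

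The main obstacle is closing this nonlinear differential inequality in a way compatible with the initial-data rate $o(r^{d+2+\eta})$. Following the strategy of Lemma~\ref{lem:wkstr}, I would pass to the truncated rescaled functional $\widehat D:=1\wedge(r^{-(d+2)}\lambda^{-2}D)$, so that on $\{\widehat D\le 1\}$ the nonlinear factor $r^{-(d+2)}\lambda^{-2}D$ is replaced by $1$ and the inequality linearises. Optimising first in the Cauchy weight $\alpha$ and then in $\lambda$ as in \eqref{def:lstar}, I would take
\begin{equation*}
\lambda_*\;\sim\;M_\e\,\e^{-2}\,(1+|\log r|)^{1/2}\,\,\mathbf{\overline{\exp}_2}(C\e^{-2}),
\end{equation*}
obtaining $\tfrac{d}{dt}\widehat D\le C\lambda_*\widehat D$. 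Integrating and using \eqref{W-ctrl-Dl-init} gives
\begin{equation*}
\sup_{[0,T]}W_2^2(\mu_\e^{(r)}(t),f_\e^{(r)}(t))\;\le\;C\lambda_*^2\,e^{C\lambda_* T}\,W_2^2(\mu_\e^{(r)}(0),f_\e^{(r)}(0)).
\end{equation*}
Since $W_2^2(\mu_\e^{(r)}(0),f_\e^{(r)}(0))=o(r^{d+2+\eta})$ by hypothesis, convergence will hold provided $e^{C\lambda_* T}\le r^{-\eta/2}$, i.e.\ $C\lambda_* T\le \tfrac{\eta}{2}|\log r|$. The assumption $r\le[\,\mathbf{\overline{\exp}_3}(K\e^{-2})]^{-1}$ gives $|\log r|\ge \,\mathbf{\overline{\exp}_2}(K\e^{-2})$, which for $K$ sufficiently large (depending on $T$, $d$ and the constants from Propositions~\ref{prop:growth} and \ref{prop:regU}) absorbs the $\,\mathbf{\overline{\exp}_2}(C\e^{-2})$ factor in $\lambda_*$ and secures the required inequality, completing the proof.
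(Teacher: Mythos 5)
Your proposal is correct and follows essentially the same route as the paper: the pushforward coupling along the regularised characteristics, the anisotropic functional $D$, the four-term field decomposition estimated via Propositions~\ref{prop:regU} and \ref{prop:Ustab} with $M_\e\le e^{C\e^{-2}}$ from Proposition~\ref{prop:growth}, the truncated functional $\hat D$, and absorption of the iterated-exponential Gr\"onwall factor using $|\log r|\ge\,\mathbf{\overline{\exp}_2}(K\e^{-2})$. Your choice of $\lambda_*$ is cruder (larger) than the paper's optimised one but harmless, and the only step left implicit is the explicit check that $\sup_{[0,T]}\hat D<1$ so that the truncation can be removed before converting back to $W_2$, which your bound $e^{C\lambda_* T}\le r^{-\eta/2}$ together with $\hat D(0)=o(r^{\eta})$ indeed yields.
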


\begin{proof} For ease of notation we drop the sub and superscripts on $f_\e^{(r)}$ and $\mu_\e^{(r)}$. We use the same method as for Lemma~\ref{lem:wkstr}, tracking the dependence on $\e$. Let $\pi_0 \in \Pi(f_0, \mu_0)$. We define a time dependent coupling $\pi_t$ as in \eqref{def:pit} with $(X^{(i)}_{x,v}, V^{(i)}_{x,v})$ being the characteristic flows corresponding to $f$ and $\mu$:
\be  \label{MF-eqn:char-2-QN}
\begin{array}{ll}
\begin{cases}\dot X^{(1)}_{x,v} = V^{(1)}_{x,v}  \\
\dot V^{(1)}_{x,v}  = E^{(\mu)}_{\e,r}  (X^{(1)}_{x,v})\\
(X^{(1)}_{x,v}(0), V^{(1)}_{x,v}(0)) = (x, v)\\
E^{(\mu)}_{\e,r} = - \chi_r \ast \nabla U_{\e,r}^{(\mu)}\\
\e^2 \Delta U_{\e,r}^{(\mu)} =e^{U_{\e,r}^{(\mu)}} - \chi_r \ast \rho_\mu
\end{cases} &\quad \quad
\begin{cases}
\dot X^{(2)}_{x,v} = V^{(2)}_{x,v} \\
\dot V^{(2)}_{x,v}  = E^{(f)}_{\e,r} (X^{(2)}_{x,v}) \\
(X^{(2)}_{x,v}(0), V^{(2)}_{x,v}(0)) = (x, v)\\
E^{(f)}_{\e,r} = - \chi_r \ast \nabla U_{\e,r}^{(f)}\\
\e^2 \Delta U_{\e,r}^{(f)}  = e^{U_{\e,r}^{(f)}} - \chi_r \ast \rho_{f} 
\end{cases} 
\end{array}
\ee
Then, since $f$ and $\mu$ are the pushforwards of $f_0$ and $\mu_0$ along their respective characteristic flows, $\pi_t$ is a coupling of $f_t$ and $\mu_t$. 

Again we define an anisotropic functional $D$:
\be \label{def:Dl-eps}
D(t) = \frac{1}{2} \int \lambda^2 |x-y|^2 + |v-w|^2 \di \pi_t(x,v,y,w) .
\ee

As before,
\be
\dot D \leq (\alpha + \lambda) D + \frac{C}{ \alpha} \sum_{i=1} I_i,
\ee
where
\be  \label{def:Ii-MF3}
\begin{split}
I_1  := \int |\bar E_{\e,r}^{(\mu)}(X^{(1)}_t) - \bar E^{(\mu)}_{\e,r}(X^{(2)}_t)|^2 \di \pi_0 , \qquad &
I_2  := \int |\bar E_{\e,r}^{(f)}(X^{(2)}_t) - \bar E^{(\mu)}_{\e,r}(X^{(2)}_t)|^2 \di \pi_0, \\
I_3  := \int |\widehat E_{\e,r}^{(\mu)}(X^{(1)}_t) - \widehat E^{(\mu)}_{\e,r}(X^{(2)}_t)|^2 \di \pi_0, \qquad &
I_4  := \int |\widehat E_{\e,r}^{(f)}(X^{(2)}_t) - \widehat E^{(\mu)}_{\e,r}(X^{(2)}_t)|^2 \di \pi_0 .
\end{split}
\ee

To estimate these quantities, we first note some basic $L^p(\bt^d)$ estimates on the regularised mass density $\chi_r \ast \rho_\mu$, using \eqref{Est-rho-mu}. For $p= \frac{d+2}{d}$, since $f_{\e}(0)$ satisfies \eqref{unif-energy}, by \eqref{reg-rho-Lp-eps} we obtain
\be
\| \rho_{f} \|_{L^\frac{d+2}{d}(\bt^d)} \leq C ,
\ee
where $C$ depends on $C_0$ and $d$ only. Thus
\be  \label{Est-rho-mu-pd-eps}
\lVert \chi_r * \rho_\mu \rVert_{L^{\frac{d+2}{d}}(\bt^d)} \leq C_d \left ( 1 + r^{-(d+2)}  \lambda^{-2} D \right ) ,
\ee
for $C_d$ depending on $C_0$ and $d$.

For $p=\infty$ we obtain
\be \label{Est-rho-mu-infty-eps}
\lVert \chi_r * \rho_\mu \rVert_{L^{\infty}(\bt^d)} \leq C_d \left ( M_\e + r^{-(d+2)}  \lambda^{-2} D \right ) .
\ee
where $M_\e$ is a constant such that
\be
\sup_{t \in [0,T]} \|\rho_{f_t}\|_{L^\infty(\bt^d)} \leq M_\e .
\ee
By Proposition~\ref{prop:growth} (which also applies to the regularised system), there exists a constant $C$ depending on $T$ such that
\be \label{mass-scaled}
\sup_{t \in [0,T]} \|\rho_{f_t}\|_{L^\infty(\bt^d)} \leq \exp{(C \e^{-2})}.
\ee
Therefore $M_\e$ may be chosen to satisfy
\be \label{M-eps}
M_\e \leq \exp{(C \e^{-2})}.
\ee

We estimate $I_1$ as in \eqref{MF-I1-est}. An extra factor of $\e^{-4}$ appears due to the quasineutral scaling on the force:
\be
I_1 \leq C \e^{-4} (\log{r})^2 \left ( M_\e + r^{-(d+2)} \lambda^{-2} D \right )^2 \lambda^{-2} D .
\ee
Similarly, $I_2$ is estimated as in \eqref{MF-I2-est} using Lemma~\ref{lem:Loep} and \eqref{Est-rho-mu-infty-eps} to obtain 
\be
I_2 \leq C \e^{-4} M_\e (M_\e + r^{-(d+2)} \lambda^{-2} D) \lambda^{-2} D .
\ee

For $I_3$ the same computation as in \eqref{MF-I3-est} implies that
\be 
I_3 \leq \int_{(\bt^d \times \br^d )^2} \lVert \widehat U_{\e,r}^{(\mu)} \rVert^2_{C^{2}(\bt^d)} |x-y|^2 \di \pi_t .
\ee
Using \eqref{Est-rho-mu-pd-eps} we apply Proposition~\ref{prop:regU} to obtain
\be \label{hatUmu-reg-eps}
\| \widehat U^{(\mu)}_{\e,r} \|_{C^2(\bt^d)} \leq C_d \, \mathbf{\overline{\exp}_2}{\left ( C_d \e^{-2} \left ( 1 + r^{-(d+2)} \lambda^{-2} D \right )  \right )} .
\ee
Thus
\begin{align}
I_3 & \leq C \, \mathbf{\overline{\exp}_2}{\left [ C_{d} \e^{-2} (1 + r^{-(d + 2)} \lambda^{-2} D)  \right ]} \int_{(\bt^d \times \br^d )^2} |x-y|^2 \di \pi_t \\
& \leq C \, \mathbf{\overline{\exp}_2}{\left [ C_{d} \e^{-2} (1 + r^{-(d + 2)} \lambda^{-2} D)  \right ]} \lambda^{-2} D .
\end{align}

For $I_4$, as in \eqref{MF-I4-est} we obtain
\be 
I_4 \leq \lVert \rho_f \rVert_{L^{\infty}(\bt^d)} \lVert \nabla \widehat U_{\e,r}^{(f)} - \nabla \widehat U_{\e,r}^{(\mu)} \rVert^2_{L^2(\bt^d)} .
\ee
By Lemma~\ref{lem:hatU-stab} and \eqref{Est-rho-mu-infty-eps},
\begin{align}
\lVert \nabla \widehat U_{\e,r}^{(f)} - \nabla \widehat U_{\e,r}^{(\mu)} \rVert^2_{L^2(\bt^d)}
& \leq C \e^{-2} \,  \mathbf{\overline{\exp}_2}{\left [ C_{d} \, \e^{-2} (1 + r^{-(d + 2)} \lambda^{-2} D)  \right ]} \lVert \bar U_{\e,r}^{(f)} - \bar U_{\e,r}^{(\mu)} \rVert_{L^2(\bt^d)}^2 \\
& \leq C \e^{-6} \, \mathbf{\overline{\exp}_2}{\left [ C_{d} \, \e^{-2} (1 + r^{-(d + 2)} \lambda^{-2} D)  \right ]} (M_\e + r^{-(d+2)} \lambda^{-2} D ) \lambda^{-2} D .
\end{align}
Thus
\be 
I_4 \leq C \e^{-6} M_\e \, \mathbf{\overline{\exp}_2}{\left [ C_{d} \e^{-2} (1 + r^{-(d + 2)} \lambda^{-2} D)  \right ]} (M_\e + r^{-(d+2)} \lambda^{-2} D ) \lambda^{-2} D .
\ee

We summarise this as 
\begin{align} \label{Gr-MF-nonlin-eps}
\dot D & \leq (\lambda + \alpha)D + \frac{1}{\alpha} C \e^{-4 }(1 + | \log{r} |)^2  \left ( M_\e + r^{-(d+2)} \lambda^{-2} D \right )^2 \lambda^{-2} D \\
& \qquad + C  \mathbf{\overline{\exp}_2}{\left [ C_{d} \e^{-2} (1 + r^{-(d + 2)} \lambda^{-2} D)  \right ]} \left ( 1 + M_\e ( M_\e + r^{-(d+2)} \lambda^{-2} D ) \right ) \lambda^{-2} D .
\end{align}

Again we introduce the truncated functional
\be \label{def:hatD2}
\hat D = 1 \wedge \left ( r^{-(d+2)} \lambda^{-2} D \right ) .
\ee
In terms of this new functional, \eqref{Gr-MF-nonlin-eps} becomes
$$
\frac{\di }{ \di t} \hat D \leq (\lambda + \alpha) \hat D + \frac{1}{4 \alpha} C_d \left [ \e^{-4} \left ( 1 + |\log r | \right )^2 + \mathbf{\overline{\exp}_2}{(C \e^{-2})} \right ] M_\e^2 \lambda^{-2} \hat D .
$$

We optimise the exponent by choosing
\begin{align}
\alpha_* & = C_d  \left [ \e^{-4} \left ( 1 + |\log r | \right )^2 + \mathbf{\overline{\exp}_2}{(C \e^{-2})} \right ]^{1/2} M_\e \lambda^{-1}_* \\ \label{def:lstar-eps}
\lambda_* & =  C_d  \left [ \e^{-4} \left ( 1 + |\log r | \right )^2 + \mathbf{\overline{\exp}_2}{(C \e^{-2})} \right ]^{1/4} \sqrt{M_\e} .
\end{align}
Then
$$
\frac{\di }{ \di t} \hat D \leq C_d \lambda_* \hat D .
$$
From this we deduce
\begin{align}
\sup_{[0,T]} \hat D(t) & \leq \exp(C_{d} \lambda_* T) \hat D(0)  \\
& \leq r^{-(d+2)} \lambda_*^{-2} \exp(C_{d} \lambda_* T) D(0) \\
& \leq r^{-(d+2)} \exp(C_{d} \lambda_* T) D(0) \\
& \leq C_{d} \exp{\left [ |\log{r}|\left ( (d+2)  + C_{d}T \sqrt{M_\e} \e^{-1} |\log{r}|^{-1/2} \right ) + \sqrt{M_\e} \,\mathbf{\overline{\exp}_2}{(C \e^{-2})} \right ]} D(0) .
\end{align}
By the estimate \eqref{M-eps} on $M_\e$, we obtain
\be 
\sup_{[0,T]} \hat D(t) \leq C_{d} \exp{\left [ |\log{r}|\left ( (d+2)  + C_{d}T \exp{(C \e^{-2})} |\log{r}|^{-1/2} \right ) \right ]} \mathbf{\overline{\exp}_3}{(C \e^{-2})} D(0) .
\ee
Since by assumption \eqref{MFQN-initconv}
\be
|\log{r}|^{-1/2} \leq \exp{\[ - \frac{1}{2} \exp{(K \e^{-2})} \]},
\ee
we have 
\be 
\sup_{[0,T]} \hat D(t) \leq C_{d} \exp{\left [ |\log{r}|\left ( (d+2)  + \exp{(C \e^{-2} - \frac{1}{2} \exp{(K \e^{-2})} )} \right ) \right ]} \,\mathbf{\overline{\exp}_3} {(C \e^{-2})} D(0) .
\ee
By assumption on $W_2(\mu_{\e,r}^N(0), f_\e^{(r)}(0))$, for all sufficiently small $r$ there exists a choice of initial coupling $\pi_0^{(r)}$ such that
\be
D(0) < r^{d+2+\eta}.
\ee
Then
\be 
\sup_{[0,T]} \hat D(t) \leq C_{d} \exp{\left [ |\log{r}|\left ( \exp{(C \e^{-2} - \frac{1}{2} \exp{(K \e^{-2})} )} - \eta \right ) + \mathbf{\overline{\exp}_2}{(C \e^{-2})} \right ]}  .
\ee
For sufficiently small $\e$,
\begin{align}
\sup_{[0,T]} \hat D(t) & \leq C_{d} \exp{\left [ - \frac{1}{2} \eta |\log{r}| + \mathbf{\overline{\exp}_2}{(C \e^{-2})} \right ]}  \\
& \leq C_{d} \exp{\left [ - \frac{1}{2} \eta \,\mathbf{\overline{\exp}_2} {(K \e^{-2})} + \mathbf{\overline{\exp}_2}{(C \e^{-2})} \right ]}  .
\end{align}
Thus if $K > C$, then
\be 
\sup_{[0,T]} \hat D(t) \to 0
\ee
as $\e$ tends to zero. In particular, for $\e$ sufficiently small,
\be  \label{hatD-small2}
 \inf_{\pi_0} \sup_{t \in [0,T]} \hat D(t) < 1 .
\ee
Hence, for $\e$ sufficiently small,
\be
 \inf_{\pi_0} \sup_{t \in [0,T]} \hat D(t) =  \inf_{\pi_0} \sup_{t \in [0,T]} r^{-(d+2)} \lambda_*^{-2} D(t).
\ee
Thus
\be 
\sup_{t \in [0,T_*]} W_2^2(\mu^N_{\e,r}(t), f_\e^{(r)}(t))  \leq \inf_{\pi_0} \sup_{t \in [0,T_*]}D(t)= r^{d+2} \lambda_*^2  \inf_{\pi_0} \sup_{t \in [0,T]} \hat D(t) \leq r^{d+2} \lambda_*^2 .
\ee
By \eqref{def:lstar-eps},
\be
\lambda^2_*  =  C_d  \left [ \e^{-4} \left ( 1 + |\log r | \right )^2 + \,\mathbf{\overline{\exp}_2} {(C \e^{-2})} \right ]^{1/2} \exp{(C \e^{-2})} .
\ee
Hence, for any $\alpha > 0$,
\be 
r^{d+2} \lambda_*^2 \leq C r^{d+2 - \alpha} \,\mathbf{\overline{\exp}_2} {(C \e^{-2})} \leq C \exp{\left \{ \exp{(C \e^{-2})} - (d+2 - \alpha)\,\mathbf{\overline{\exp}_2} {(K \e^{-2})} \right \}}.
\ee
The right hand side converges to zero as $\e$ tends to zero. Therefore
\be
\lim_{\e \to 0} \sup_{t \in [0,T]} W_2^2(f_t, \mu_t) = 0 .
\ee

\end{proof}

The next lemma is a quantified version of Lemma~\ref{lem:RUR}.

\begin{lem}[Approximation of (VPME) in quasineutral scaling] \label{lem:RUR-QN}
Let $f_{\e}(0)$ satisfy the assumptions of Theorem~\ref{thm:quasi}. Let $f_\e^{(r)}$ be the solution of the scaled and regularised Vlasov equation \eqref{vpme-reg-QN} with initial datum $f_{\e}(0)$. Let $f_\e$ be the unique bounded density solution of \eqref{vpme-quasi}.

Fix $T > 0$. Then there exists a constant $C$ depending on $T$ and on $\{f_{\e}(0)\}_\e$ such that the following holds. If $r$ and $\e$ satisfy
\be \label{reps-RUR-QN}
r \leq \[ \mathbf{\overline{\exp}_3}{(C \e^{-2})} \]^{-1},
\ee
then
\be
\lim_{r \to 0} \sup_{[0,T]} W_2 \left ( f_\e^{(r)}(t), f_\e(t) \right ) = 0.
\ee

\end{lem}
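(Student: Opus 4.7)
The plan is to mirror the proof of Lemma~\ref{lem:RUR}, carefully propagating the $\e$-dependence through every constant, in exactly the spirit of the upgrade from Lemma~\ref{lem:wkstr} to Lemma~\ref{lem:wkstr-QN}. I would first fix an initial coupling $\pi_0\in\Pi(f_\e(0),f_\e(0))$ and define $\pi_t$ as its pushforward along the pair of characteristic flows associated respectively to the exact solution $f_\e$ (with field $E_\e=-\nabla U_\e$) and the regularised solution $f_\e^{(r)}$ (with field $-\chi_r\ast \nabla U_{\e,r}$). Next I would introduce the anisotropic functional $D$ of \eqref{def:Dl} with weight $\lambda$ to be chosen later, decompose each field as $E=\bar E+\widehat E$, and arrive at the standard bound
\be
\dot D \leq (\lambda+\alpha)\,D + \frac{C}{\alpha}\sum_{i=1}^{4} I_i,
\ee
with $I_1,\dots,I_4$ defined as in \eqref{def:Ii-MF2} but with the scaled potentials $U_{\e,r}$ and $U_\e$.

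The next step is to estimate each $I_i$ using the $\e$-quantified tools of Section~\ref{sec:electric}, inserting the $\e^{-2}$ factors forced by the scaled Poisson equations. For $I_1$ I would combine the log-Lipschitz regularity \eqref{Lip_moll} of $\chi_r\ast \bar E_{\e,r}$ with the uniform $L^\infty$ bound $M_\e$ on the mass density. For $I_2$, Lemma~\ref{lem:Loep} yields a factor $\e^{-4}M_\e$ multiplying $\bigl(r+\lambda^{-1}\sqrt{D}\bigr)^2$, the extra $r^2$ arising from the mollification error controlled by $\|\nabla^2\bar U_\e\|_{L^2}$, hence ultimately by $\|\rho_\e\|_{L^{(d+2)/d}}$, which is uniformly bounded via energy conservation and Lemma~\ref{lem:rho-Lp}. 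For $I_3$, Proposition~\ref{prop:regU} furnishes the double-exponential bound $\|\widehat U_{\e,r}^{(f)}\|_{C^2(\bt^d)}\leq \mathbf{\overline{\exp}_2}(C\e^{-2})$, and a parallel argument via Proposition~\ref{prop:Ustab}, together with the mollification error estimate \eqref{eq:g ast}, controls $I_4$. Crucially, the mass density bound $M_\e\leq \exp(C\e^{-2})$ from Proposition~\ref{prop:growth} (and its regularised analogue) guarantees that every constant appearing is at worst a double exponential in $\e^{-2}$.

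Summing the four contributions, optimising in $\alpha$, and setting $\lambda=\lambda_*$ roughly as in \eqref{def:lstar-eps}, one arrives at a linear Gr\"onwall-type inequality
\be
\dot D \leq C\,\lambda_*\, D + C_\e\, r^2,
\ee
where $\log C_\e$ and $\lambda_* T$ are at worst $\mathbf{\overline{\exp}_2}(C\e^{-2})$. Since $f_\e$ and $f_\e^{(r)}$ share the same initial datum, $\inf_{\pi_0} D(0)=0$, and integrating yields
\be
\sup_{t\in[0,T]} W_2^2\bigl(f_\e(t), f_\e^{(r)}(t)\bigr) \leq C_\e\, r^2 \exp(C\lambda_* T).
\ee

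The main obstacle will be to verify that the triple-exponential smallness of $r$ in \eqref{reps-RUR-QN} overwhelms these double-exponential $\e$-dependent prefactors. Rewriting the last bound as $\exp\bigl(2\log r + C\lambda_* T + \log C_\e\bigr)$ and using $\lambda_* T + \log C_\e \lesssim \mathbf{\overline{\exp}_2}(C\e^{-2})$ together with the hypothesis $|\log r| \geq \mathbf{\overline{\exp}_2}(C\e^{-2})$, one sees that choosing the constant in \eqref{reps-RUR-QN} large enough forces the right-hand side to vanish as $r\to 0$ (equivalently $\e\to 0$), which gives the desired conclusion.
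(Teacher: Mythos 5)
Your proposal follows essentially the same route as the paper: the paper's proof of Lemma~\ref{lem:RUR-QN} is exactly an $\e$-quantified rerun of Lemma~\ref{lem:RUR}, with the trivial coupling $\pi_0$, the anisotropic functional $D$, the $\bar U/\widehat U$ splitting, Lemma~\ref{lem:Loep}, Lemma~\ref{lem:hatU-stab}, Proposition~\ref{prop:regU}, the mollification error \eqref{eq:g ast}, and the mass bound $M_\e \leq \exp(C\e^{-2})$ from Proposition~\ref{prop:growth}. (The paper separates the mollification error of the full field into a fifth term $I_5$, estimated via $\| U_r\|_{W^{2,2}}\leq C\e^{-2}\| e^{U_r}-\chi_r\ast\rho\|_{L^2}$ and the $L^2$ bound on $e^{U_r}$ from \eqref{est:eU-Lp}, rather than folding it into $I_2$ and $I_4$ as you do; both variants work.)

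One point in your final step is not correct as stated and is precisely the delicate part of this lemma: you claim $\lambda_* T + \log C_\e \lesssim \mathbf{\overline{\exp}_2}(C\e^{-2})$. But $\lambda_*$, as in \eqref{def:lstar-eps}, contains the factor $\e^{-2}(1+|\log r|)^{1/2}\sqrt{M_\e}\sim \exp(C\e^{-2})|\log r|^{1/2}$, and the hypothesis \eqref{reps-RUR-QN} only gives a \emph{lower} bound on $|\log r|$, so $\lambda_* T$ is not controlled by any $r$-independent double exponential. The paper handles this by absorbing the Gr\"onwall factor into a small negative power of $r$: since $\exp(C_T\e^{-2})|\log r|^{-1/2}\leq \exp\bigl(C_T\e^{-2}-\tfrac12\exp(K\e^{-2})\bigr)\to 0$ under \eqref{reps-RUR-QN}, one has $\exp(CT\lambda_*)\leq r^{-\eta}\,\mathbf{\overline{\exp}_3}(C_T\e^{-2})$ for any $\eta>0$ and $\e$ small, so the bound becomes $r^{2-\eta}\,\mathbf{\overline{\exp}_3}(C_T\e^{-2})\leq \exp\bigl(\mathbf{\overline{\exp}_2}(C_T\e^{-2})-(2-\eta)\,\mathbf{\overline{\exp}_2}(K\e^{-2})\bigr)\to 0$ provided the constant $K$ in \eqref{reps-RUR-QN} is at least $C_T$. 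With this replacement your argument closes; the strategy you announce (triple-exponential smallness of $r$ beating the $\e$-dependent prefactors) is the right one, but the justification must go through this $r^{-\eta}$ absorption rather than a uniform double-exponential bound on $\lambda_* T$.
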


\begin{proof}
By \eqref{unif-energy}, Lemma~\ref{lem:rho-Lp} and \eqref{reg-rho-Lp-eps} there exists a constant $C$ depending on $C_0$ and $d$ only such that
\be \label{mass-Lp-scaled}
\| \rho_{f^{(r)}_\e} \|_{L^{\infty}([0,T]; L^{\frac{d+2}{d}}(\bt^d))}, \| \rho_{f_\e} \|_{L^\infty([0,T]; L^{\frac{d+2}{d}}(\bt^d))} \leq C .
\ee
By Proposition~\ref{prop:growth}, there exists a constant $C$ depending on the initial data and $T$ such that
\be \label{mass-scaled-2}
\| \rho_{f^{(r)}_\e} \|_{L^\infty([0,T]; L^\infty(\bt^d))}, \| \rho_{f_\e} \|_{L^\infty([0,T]; L^\infty(\bt^d))} \leq \exp{(C \e^{-2})}.
\ee

We will control the Wasserstein distance between $f_\e^{(r)}$ and $f_\e$ using a particular coupling $\pi_t$. Since both solutions share the same initial datum, we take $\pi_0$ to be the trivial coupling $f_{\e}(0)(x,v) \delta((x,v) - (y,w)) \di x \di v \di y \di w$. We construct $\pi_t \in \Pi(f_\e(t), f_\e^{(r)}(t))$ as in \eqref{def:pit}, using the scaled version of the characteristic systems in \eqref{RUR:char-1}:
\be  \label{RUR:char-1-scaled}
\begin{array}{ll}
\begin{cases}
\dot X^{(1)}_{x,v} = V^{(1)}_{x,v} \\
\dot V^{(1)}_{x,v}  = E  (X^{(1)}_{x,v})\\
(X^{(1)}_{x,v}(0), V^{(1)}_{x,v}(0)) = (x, v)\\
E = - \nabla U\\
\e^2 \Delta U = e^{U} -  \rho_{f_\e}
\end{cases}  &\qquad\qquad 
\begin{cases}
\dot X^{(2)}_{x,v} = V^{(2)}_{x,v} \\
\dot V^{(2)}_{x,v}  = E_r (X^{(2)}_{x,v}) \\
(X^{(2)}_{x,v}(0), V^{(2)}_{x,v}(0)) = (x, v)\\
E_r = - \chi_r \ast \nabla U_{r}\\
\e^2 \Delta U_{r}  = e^{U_{r}} - \chi_r \ast \rho_{f_\e^{(r)}}.
\end{cases} 
\end{array}
\ee

We define $D$ as in \eqref{def:Dl}. As in Lemma~\ref{lem:wkstr}, we obtain for any $\alpha > 0$,
\be
\dot D \leq (\alpha + \lambda) D + \frac{C}{ \alpha} \sum_{i=1}^5 I_i,
\ee
where
\be \label{def:Ii-MF4}
\begin{split}
&I_1  := \int |\nabla \bar U_{r}(X^{(1)}_t) - \nabla \bar U_{r}(X^{(2)}_t)|^2 \di \pi_0, \qquad
I_2  := \int |\nabla \bar U_{r}(X^{(1)}_t) - \nabla \bar U(X^{(1)}_t)|^2 \di \pi_0, \\
&I_3  := \int |\nabla \widehat U_{r}(X^{(1)}_t) - \nabla \widehat U_{r}(X^{(2)}_t)|^2 \di \pi_0, \qquad
I_4  := \int |\nabla \widehat U(X^{(1)}_t) - \nabla \widehat U_{r}(X^{(1)}_t)|^2 \di \pi_0 \\
&I_5 : =  \int | \chi_r \ast \nabla U_{r}(X^{(2)}_t) - \nabla U_{r}(X^{(2)}_t)|^2 \di \pi_0 .
\end{split}
\ee

$I_1$ is estimated as in \eqref{RUR-I1-est}. There is an extra factor of $\e^{-4}$ due to the quasineutral scaling and the mass bound $M=M_\e$ depends on $\e$:
\be 
I_1 \leq C \e^{-4} |\log{r}|^2 M_\e^2 \, \lambda^{-2} D .
\ee

We estimate $I_2$ as in \eqref{RUR-I2-est}, keeping track of the dependence on $\e$ in Lemma~\ref{lem:Loep}:
\be
\| \nabla \bar U - \nabla \bar U_{r} \|_{L^2(\bt^d)} \leq C \e^{-2} \sqrt{M_\e} (r + \lambda^{-1} \sqrt{D}) .
\ee
We conclude that
\be
I_2 \leq C \e^{-4} M_\e^2 (r + \lambda^{-1} \sqrt{D}) ^2 .
\ee

For $I_3$, by the regularity estimate from Proposition~\ref{prop:regU} and the uniform $L^{\frac{d+2}{d}}(\bt^d)$ estimate on the density \eqref{mass-Lp-scaled} we have
\be
\| \widehat U_{r} \|_{C^{2, \alpha}(\bt^d)} \leq \mathbf{\overline{\exp}_2}{(C \e^{-2})} ,
\ee
where $C$ depends only on $C_0$. We obtain
\begin{align}
I_3 & \leq \mathbf{\overline{\exp}_2}{(C \e^{-2})} \int_{\bt^d \times \br^d} |X^{(1)}_t - X^{(2)}_t|^2 \di \pi_0 \\
& \leq \mathbf{\overline{\exp}_2}{(C \e^{-2})} \, \lambda^{-2} D .
\end{align}

For $I_4$ we use the stability estimate from Lemma~\ref{lem:hatU-stab} and the $L^{\frac{d+2}{d}}(\bt^d)$ estimate \eqref{mass-Lp-scaled}:
\be
\| \nabla \widehat U - \nabla \widehat U_{r} \|_{L^2(\bt^d)} \leq \mathbf{\overline{\exp}_2}{(C \e^{-2})} \| \nabla \bar U - \nabla \bar U_{r} \|_{L^2(\bt^d)} \leq \mathbf{\overline{\exp}_2}{(C \e^{-2})} \sqrt{M_\e} (r + \lambda^{-1} \sqrt{D})  .
\ee
Hence
\be
I_4 \leq \mathbf{\overline{\exp}_2}{(C \e^{-2})} M_\e^2 (r + \lambda^{-1} \sqrt{D}) ^2 .
\ee

For $I_5$, by \eqref{eq:g ast} we have
\be
\| \chi_r \ast \nabla U_r - \nabla U_r \|_{L^2(\bt^d)}^2 \leq C r^2 \| U_r \|_{W^{2,2}(\bt^d)}^2 .
\ee
Thus
\begin{align}
I_5 & : = \int_{\bt^d} \left \lvert \chi_r \ast \nabla U_{r}(x) - \nabla U_{r}(x) \right \rvert^2 \rho_{f^{(r)}_\e}(\di x) \\
& \leq \| \rho_{f^{(r)}_\e} \|_{L^\infty(\bt^d)} \|\chi_r \ast \nabla U_{r} - \nabla U_{r}\|_{L^2(\bt^d)}^2 \\
& \leq C r^2 M_\e \|U_{r}\|_{W^{2,2}(\bt^d)}^2 \\
& \leq C r^2 \e^{-4} M_\e \| e^{U_{r}} - \rho_{f^{(r)}_\e}\|_{L^2(\bt^d)}^2 .
\end{align}
As in \eqref{est-rho-L2}, we have
\be 
\| \rho_{f_\e^{(r)}} \|_{L^2(\bt^d)}
\leq   \| \rho_{f_\e^{(r)}} \|^{\frac{d-2}{2d}}_{L^\infty(\bt^d)} \| \rho_{f_\e^{(r)}} \|^{\frac{d+2}{2d}}_{L^{\frac{d+2}{d}}(\bt^d)} \leq C M_\e^{\frac{d-2}{2d}} .
\ee
To estimate $e^{U_{r}} $, first note that by Proposition~\ref{prop:regU} and \eqref{mass-Lp-scaled},
\be \label{Ur-Linfty}
\| \bar U_{r}\|_{L^\infty(\bt^d)} \leq C \e^{-2} .
\ee
By \eqref{est:eU-Lp},
\be
\| e^{\widehat{U}_r} \|_{L^2(\bt^d)} \leq \exp(C \e^{-2}) .
\ee
Thus
\be
\| e^{U_r} \|_{L^2(\bt^d)} \leq C \exp{\left (\| \bar U_r\|_{L^\infty(\bt^d)} \right )} \, \| e^{\widehat{U}_r} \|_{L^2(\bt^d)} \leq \exp{(C \e^{-2})} .
\ee
Therefore
\be
I_5 \leq C r^2 \e^{-4} M_\e \left ( M_\e^{\frac{d-2}{d}} + e^{C \e^{-2}}  \right ) .
\ee

Putting these five estimates together, we obtain
\begin{multline}
\dot D \leq (\alpha + \lambda) D  +  \frac{C}{\alpha \lambda^2} r^2 \mathbf{\overline{\exp}_2}{(C \e^{-2})} M_\e^2 \\
+ \frac{C}{ \alpha \lambda^2}  \left \{ \mathbf{\overline{\exp}_2}{(C \e^{-2})} M_\e^2 + \e^{-4}|\log r|^2 M_\e^2 +  \mathbf{\overline{\exp}_2}{(C \e^{-2})} \right \} D.
\end{multline}

By \eqref{mass-scaled-2}, we may estimate that
\be
M_\e \leq \exp{(C \e^{-2})} .
\ee
From this we deduce
\be
\dot D \leq (\alpha + \lambda) D + \frac{C}{ \alpha \lambda^2}  \left \{ \exp{(C \e^{-2})}|\log r|^2 + \mathbf{\overline{\exp}_2}{(C \e^{-2})} \right \} D + \frac{C}{\alpha \lambda^2} \mathbf{\overline{\exp}_2}{(C \e^{-2})} r^2 .
\ee

After choosing $\alpha$ and $\lambda$ so as to minimise the constant in front of $D$ we obtain
\be
\dot D \leq  C \lambda_* D  + \mathbf{\overline{\exp}_2}{(C \e^{-2})} r^2 ,
\ee
where
\be
\lambda_* =  \left [ \exp{(C \e^{-2})}|\log r|^2 + \mathbf{\overline{\exp}_2}{(C \e^{-2})} \right ]^{1/4} \geq 1 ,
\ee
for $r, \e$ sufficiently small.

Therefore, by a Gr\"onwall estimate
\be 
D(t) \leq \left ( D(0) +  \mathbf{\overline{\exp}_2}{(C \e^{-2})}  r^2 \right ) \exp{\[ C t \lambda_* \]} .
\ee
Since $\pi_0$ was trivial, $D(0) = 0$. Since $D$ controls the squared Wasserstein distance,
\be
W_2^2(f_\e^{(r)}(t), f_\e(t)) \leq  \exp{(C \e^{-2})}  r^2 \exp{\[ C t \lambda_* \]} .
\ee
Then, by definition of $\lambda_*$,
\be
\sup_{t \in [0,T]} W_2^2(f_\e^{(r)}(t), f_\e(t)) \leq r^2 \, \exp{\[ \exp{(C_T \e^{-2})} |\log r|^{1/2} \]} \cdot \mathbf{\overline{\exp}_3} {(C_T \e^{-2})} .
\ee
If $r \leq \[ \mathbf{\overline{\exp}_3}{(K \e^{-2})} \]^{-1}$, then
\be
e^{C \e^{-2}} |\log r|^{-1/2} \leq \exp{\[ C_T \e^{-2} - \frac{1}{2} \exp{(K \e^{-2})} \]} \to 0
\ee
as $\e$ tends to zero, for any $K \geq 0$. Hence, for any $\eta > 0$, for $\e$ sufficiently small,
\begin{align}
r^2 \, \exp{\[ \exp{(C_T \e^{-2})} |\log r|^{1/2} \]} & \leq r^{2 - \exp{(C_T \e^{-2})} |\log r|^{-1/2}} \\
& \leq r^{2 - \eta} .
\end{align}
Moreover,
\be 
r^{2 - \eta} \, \mathbf{\overline{\exp}_3}{(C_T \e^{-2})} \leq \exp{\[ \mathbf{\overline{\exp}_2}{(C_T \e^{-2})} - (2 - \eta) \mathbf{\overline{\exp}_2}{(K \e^{-2})}  \]} ,
\ee
which converges to zero for any $\eta < 2$ as $\e$ tends to zero, as long as $K \geq C_T$.

Therefore, if $r \leq \[ \mathbf{\overline{\exp}_3}{(K \e^{-2})} \]^{-1}$ for $K \geq C_T$, then as $\e$ tends to zero (and so $r$ also tends to zero),
\be
\sup_{t \in [0,T]} W_2(f_\e^{(r)}(t), f_\e(t)) \to 0.
\ee

\end{proof}

\begin{proof}[Proof of Theorem~\ref{thm:MFQN}]
In the following, $\mu^N_{\e,r}$ denotes the empirical measure associated to the solution of the particle system \eqref{ODE:rQN}, while $f_{\e}^{(r)}$ denotes the solution of the scaled version of \eqref{vpme-reg-QN} with initial datum $f_{\e}(0)$. Let $g$ be the solution of the KIE system \eqref{KIE}, on some time interval $[0,T_*]$, obtained in the quasineutral limit from $f_\e$ using Theorem~\ref{thm:quasi}. By the triangle inequality for $W_1$,
\begin{multline} \label{MFQN-triangle}
\sup_{t \in [0,T_*]} W_1(\mu^N_{\e,r}(t), g(t)) \leq \sup_{t \in [0,T_*]} W_1(\mu^N_{\e,r}(t), f_\e^{(r)}(t))\\ + \sup_{t \in [0,T_*]} W_1(f_\e^{(r)}(t), f_\e(t)) + \sup_{t \in [0,T_*]} W_1(f_\e(t), g(t)) .
\end{multline}
The last term converges to zero as $\e$ tends to zero, by Theorem~\ref{thm:quasi}.

For the other two terms, we first observe that
\be
W_1(\mu^N_{\e,r}(t), f_\e^{(r)}(t)) \leq W_2(\mu^N_{\e,r}(t), f_\e^{(r)}(t)) ,\qquad W_1(f_\e^{(r)}(t), f_\e(t))  \leq W_2(f_\e^{(r)}(t), f_\e(t)).
\ee
Then the second term of \eqref{MFQN-triangle} converges to zero by Lemma~\ref{lem:wkstr-QN} and the third term of \eqref{MFQN-triangle} converges to zero by Lemma~\ref{lem:RUR-QN}, provided that \eqref{MFQN-initconv} is satisfied for $C$ depending on $T_*$.

\end{proof}

\section{Typicality: proof of Theorems~\ref{thm:typ-MF} and \ref{thm:typ-MFQN}}
\label{sec:proof-typicality}

In this last section, we prove Theorems~\ref{thm:typ-MF} and \ref{thm:typ-MFQN} concerning the relation between the choice of parameters in the mean field (or combined mean field-quasineutral) limit and the initial configurations. 
The underlying observation is that if one constructs a collection of empirical measures $(\nu^N)_N$ by drawing $N$ independent samples from a reference measure $\nu$, then, by the Glivenko-Cantelli theorem, almost surely $\nu^N$ will converge to $\nu$ as $N$ tends to infinity in the sense of weak convergence of measures. We want to use a quantitative version of this result to find configurations for which the associated empirical measures converge to our reference data $f_0$ sufficiently quickly.
A result of this type was proved by Fournier and Guillin in \cite{FG}, with the distance between $\nu^N$ and $\nu$ measured in Wasserstein sense. We use two slightly different versions of their concentration estimates. 

In the unscaled case $\e=1$ (Theorem~\ref{thm:typ-MF}), we give a result for more general data $f_0$ satisfying a moment condition. For this we need the following result from \cite[Theorem 2]{FG}.

\begin{thm} \label{thm:conc-mom}
Let $\nu$ be a probability measure on $\br^m$ and let $\nu^N$ denote the empirical measure of $N$ independent samples from $\nu$. Assume that $\nu$ has a finite $k$th moment for some $k > 2p$:
\be
M_k(\nu) : = \int_{\br^m} |x|^k \di \nu( x) < + \infty .
\ee
Then there exist constants $c, C$ depending on $p,m$ and $M_k(\nu)$ such that for any $x > 0$,
\be
\bb{P} \left ( W^p_p(\nu^N, \nu) \geq x \right) \leq a(N,x) \mathbbm{1}_{\{x \leq 1\}} + b(N,x) ,
\ee
where
\be
a(N,x) = C \begin{cases} \exp{(-cN x^2)} &p > \frac{m}{2} \\ \label{def:aN}
\exp{\(-cN \[\frac{x}{\log{(2 + \frac{1}{x})}}\]^2\)} &p = \frac{m}{2} \\
\exp{\(-cN x^{m/p}\)} &p < \frac{m}{2} 
\end{cases}
\ee
and
\be
b(N,x) = C N(Nx)^{-(k - \alpha)/p}
\ee
for any $\alpha \in (0,k)$.

\end{thm}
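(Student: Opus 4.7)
The plan is to follow the dyadic discretization strategy of Fournier and Guillin, separating bulk and tail contributions. The first step is to fix a truncation radius $R>0$ and split $\br^m$ into the ball $B_R$ and its complement. Using a coupling that transports the mass lying outside $B_R$ within itself, one obtains a decomposition
\[
W_p^p(\nu^N,\nu) \leq C\Bigl(W_p^p\bigl(\nu^N|_{B_R},\nu|_{B_R}\bigr) + T^N_R + T_R\Bigr),
\]
where $T^N_R := \int_{|x|>R}|x|^p\,\di\nu^N(x)$ and $T_R := \int_{|x|>R}|x|^p\,\di\nu(x)$. The first term on the right-hand side produces the bound $a(N,x)$, while the tail terms yield the polynomial bound $b(N,x)$.

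For the bulk contribution, introduce a dyadic partition of $B_R$: let $\mathcal{Q}_\ell$ denote the cubes of side $2^{-\ell}R$, and exploit the classical chaining inequality
\[
W_p^p\bigl(\nu^N|_{B_R},\nu|_{B_R}\bigr) \leq C R^p \sum_{\ell\geq 0}2^{-p\ell}\sum_{Q\in\mathcal{Q}_\ell}\bigl|\nu^N(Q)-\nu(Q)\bigr|.
\]
Since $N\nu^N(Q)$ is a binomial random variable with mean $N\nu(Q)$, Bernstein's inequality delivers sub-Gaussian concentration of each coarse-grained discrepancy with rate $\exp(-cNt^2/(\nu(Q)+t))$. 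A union bound over the $\approx 2^{m\ell}$ cubes at level $\ell$, combined with a careful choice of the finest dyadic level $\ell_0=\ell_0(x,N)$ retained in the analysis, produces the three regimes of $a(N,x)$: when $p>m/2$ the Bernstein exponent dominates the combinatorial entropy and yields the Gaussian rate $\exp(-cNx^2)$; when $p<m/2$ the cube count $2^{m\ell}$ dominates and gives $\exp(-cNx^{m/p})$; and in the critical case $p=m/2$ these two effects balance exactly, producing the logarithmic correction in the denominator.

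For the tail part, Markov's inequality exploits the finite $k$th moment assumption. Writing $T^N_R = \frac{1}{N}\sum_{i=1}^N |X_i|^p\mathbbm{1}_{|X_i|>R}$ as an average of i.i.d.\ nonnegative summands with mean $T_R \leq R^{p-k}M_k(\nu)$, one applies a Rosenthal-type moment inequality for i.i.d.\ sums at order $(k-\alpha)/p > 1$ to obtain
\[
\bb{P}\bigl(T^N_R \geq t\bigr) \leq CN(Nt)^{-(k-\alpha)/p}
\]
for any $\alpha\in(0,k)$, with an analogous deterministic estimate for $T_R$. Choosing $R=R(x)$ to balance bulk and tail contributions yields the stated estimate. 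The main technical obstacle is the critical case $p=m/2$, where the combinatorial entropy of the dyadic grid and the Bernstein tail are of comparable strength, and a delicate summation over dyadic levels is required to extract precisely the logarithmic correction rather than a pure power law; the secondary subtlety is the joint optimization in $R$ and $\ell_0$, which must be carried out so that the deterministic dyadic remainder beyond scale $\ell_0$ is absorbed cleanly into the concentration estimate.
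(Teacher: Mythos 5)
This statement is not proved in the paper at all: it is quoted verbatim as \cite[Theorem 2]{FG}, and the paper's ``proof'' consists of the citation to Fournier--Guillin, so there is no internal argument to compare against. Your sketch is, in essence, a reconstruction of the Fournier--Guillin proof, and it captures the correct architecture: a multiscale (dyadic) comparison of $\nu^N$ and $\nu$ controlling $W_p^p$ by weighted cell-count discrepancies, binomial concentration for the cell masses with a union bound whose entropy competes with the concentration rate (producing the three regimes of $a(N,x)$, with the $\log$ correction at $p=m/2$ coming from the $\approx\log(1/x)$ levels that contribute comparably), and a moment/Markov argument for the tails producing the polynomial term $b(N,x)$. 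Two organisational differences from the original are worth flagging. First, Fournier--Guillin do not truncate at a single radius $R$: they decompose $\br^m$ into dyadic annuli and fold the annulus weights $2^{pn}$ into a single weighted total-variation-type functional dominating $W_p^p$, which avoids the joint optimisation in $R$ and $\ell_0$ you describe; your truncation variant can be made to work (your Rosenthal-plus-Markov treatment of $T^N_R$ does give $CN(Nt)^{-(k-\alpha)/p}$ with the correct loss $\alpha$), but you must also handle the fact that the restrictions $\nu^N|_{B_R}$ and $\nu|_{B_R}$ have different total masses, so the ``bulk'' Wasserstein term is only defined after renormalisation and costs an extra term of order $|\nu^N(B_R)-\nu(B_R)|\,R^p$ that has to be absorbed into the concentration estimate. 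Second, they bound the whole partition discrepancy $\sum_{F}|\nu^N(F)-\nu(F)|$ at once via the classical bound $2^{\#\text{cells}}e^{-Nt^2/2}$ rather than per-cube Bernstein plus a union bound over cubes; your per-cube route requires a careful allocation of thresholds $t_{Q,\ell}$ across cubes and levels to recover the same exponents, which you assert but do not carry out. As a blueprint the proposal is sound; as written it is a sketch of the cited external proof rather than a complete argument.
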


For the combined mean field and quasineutral limit, we work with a different initial datum $f_{\e}(0)$ for each $\e$. To use the Wasserstein concentration estimates, we need to take care of the dependence of the constants $c, C$ on (the moments of) $f_{\e}(0)$. In fact, since in Theorem~\ref{thm:quasi} we work with compactly supported data, we will find it more convenient to use a slightly different version of the estimates, designed for compactly supported measures. The following result is from \cite[Proposition 10]{FG}.

\begin{thm} \label{thm:conc-cpct}
Let $\nu$ be a probability measure supported on $(-1,1]^m$. Let $\nu^N$ denote the empirical measure of $N$ independent samples from $\nu$. Then there exist constants $c, C$ depending on $p$ and $m$ only such that for any $x > 0$,
\be
\bb{P} \left ( W^p_p(\nu^N, \nu) \geq x \right) \leq a(N,x) \mathbbm{1}_{\{x \leq 1\}} ,
\ee
where $a(N,x)$ is defined by \eqref{def:aN}.
\end{thm}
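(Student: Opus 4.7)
\begin{paragraph}{Proof proposal for Theorem~\ref{thm:conc-cpct}.}
The plan is to reduce the control of the Wasserstein distance to a simultaneous control of how the empirical mass of $\nu^N$ concentrates around the true mass of $\nu$ on a well-chosen family of subsets of the support, and then to apply elementary concentration inequalities for Bernoulli variables.

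First I would introduce a nested dyadic decomposition of the support. For each integer $l \geq 0$, let $\mc{P}_l$ be the partition of $(-1,1]^m$ into $2^{lm}$ half-open cubes of side $2^{-l+1}$, each being the product of dyadic intervals. The key analytic input is a deterministic inequality estimating the $p$th power of the Wasserstein distance between two probability measures $\mu,\eta$ supported on $(-1,1]^m$ in terms of their differences on the dyadic cells: there exists $C_{m,p}$ such that for every integer $L \geq 1$,
\be
W_p^p(\mu,\eta) \leq C_{m,p}\Bigl(2^{-pL} + \sum_{l=1}^{L} 2^{-pl} \sum_{Q \in \mc{P}_l} |\mu(Q) - \eta(Q)|\Bigr).
\ee
This bound is obtained by iteratively coupling the masses of $\mu$ and $\eta$ level by level, matching as much common mass as possible inside each dyadic cube before refining, and observing that any leftover mass is transported at most a distance $C 2^{-l}$ within a cube of level $l$.

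Next, I would apply this inequality with $\mu = \nu^N$ and $\eta = \nu$. For each fixed cube $Q$ at level $l$, $N \nu^N(Q)$ is a Binomial$(N, \nu(Q))$ random variable, so Hoeffding's inequality gives
\be
\bb{P}\bigl(|\nu^N(Q) - \nu(Q)| \geq t\bigr) \leq 2 \exp(-2 N t^2), \qquad t>0.
\ee
Combined with Markov's inequality and the dyadic bound, this reduces the problem to choosing the truncation level $L$ and a threshold $t_l$ at each level so that
\be
2^{-pL} + C_{m,p} \sum_{l=1}^{L} 2^{-pl} \cdot 2^{lm} \, t_l \lesssim x,
\ee
while keeping the union-bound tail $\sum_{l=1}^{L} 2^{lm} \cdot 2 \exp(-2 N t_l^2)$ of the order of the desired $a(N,x)$.

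The main obstacle, and the step I would spend the most care on, is the optimisation of $L$ and of the thresholds $t_l$ in the three regimes $p > m/2$, $p = m/2$, $p < m/2$. When $p > m/2$ the dyadic sum $\sum_l 2^{l(m-p)}$ converges, so one can pick a single scale $t_l$ proportional to $x$, recovering the Gaussian regime $\exp(-c N x^2)$. When $p < m/2$, the exponential tail has to absorb the factor $2^{lm}$ at each level, and the balance yields $\exp(-c N x^{m/p})$; in this case one also has to choose $L$ essentially as $-\frac{1}{p} \log_2 x$ so that the deterministic remainder $2^{-pL}$ is of order $x$. The borderline case $p=m/2$ produces the logarithmic correction in the exponent by the same balancing, with an extra $\log(2+1/x)$ arising when equating geometric and polynomial contributions. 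The restriction $x \leq 1$ enters because for $x > 1$ the estimate is trivial (the Wasserstein distance between probability measures in $(-1,1]^m$ is bounded), which is reflected by the indicator $\mathbbm{1}_{\{x \leq 1\}}$.
\end{paragraph}
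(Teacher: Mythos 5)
The paper does not actually prove this statement: it is quoted from \cite[Proposition 10]{FG}, and your sketch is in effect an attempt to reconstruct the Fournier--Guillin argument. Your deterministic input is fine: the truncated multiscale bound $W_p^p(\mu,\eta)\le C_{m,p}\bigl(2^{-pL}+\sum_{l\le L}2^{-pl}\sum_{Q\in\mc{P}_l}|\mu(Q)-\eta(Q)|\bigr)$ is exactly their starting point (the truncation is harmless since each level-$l$ sum is at most $2$). The genuine gap is in the concentration step. If you control each cube separately by Hoeffding and take a union bound over the $2^{lm}$ cubes of level $l$, then to keep the level-$l$ discrepancy $D_l:=\sum_{Q\in\mc{P}_l}|\nu^N(Q)-\nu(Q)|$ below a budget $s_l$ you must take the per-cube threshold $t_l=s_l 2^{-lm}$, and the tail you pay is $2^{lm}\exp(-2Ns_l^2 2^{-2lm})$: the exponent carries a spurious factor $2^{-2lm}$. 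Since the deterministic bound forces you down to level $L\simeq p^{-1}\log_2(1/x)$, where $2^{Lm}\simeq x^{-m/p}$, and since the budgets must satisfy $\sum_l 2^{-pl}s_l\lesssim x$ (so $s_l\lesssim x2^{pl}$ up to geometric factors), the worst-level exponent your scheme can produce is of order $Nx^{2m/p}$ whenever $p<m$, not $Nx^{m/p}$, and not even $Nx^2$ in the range $m/2<p<m$; in addition a polynomial prefactor of order $x^{-m/p}$ survives and cannot be absorbed when $Nx^{m/p}$ is of order one. So the scheme fails to reproduce $a(N,x)$ precisely in the regimes $p\le m/2$ that the paper needs ($p=2$, $m=2d\in\{4,6\}$).

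The missing idea, which is the heart of the Fournier--Guillin proof, is to concentrate the whole level-$l$ discrepancy at once rather than cube by cube: writing $D_l=2\max_{S\subset\mc{P}_l}\bigl(\nu^N(\cup S)-\nu(\cup S)\bigr)$ and applying Hoeffding to each of the $2^{2^{lm}}$ unions of level-$l$ cubes gives $\bb{P}(D_l\ge s)\le 2^{2^{lm}}\exp(-Ns^2/2)$; equivalently one may use McDiarmid's bounded-differences inequality for $D_l$ together with $\bb{E}[D_l]\le (2^{lm}/N)^{1/2}$. The exponent is then $Ns^2$ with no loss in $l$, and the doubly exponential prefactor is harmless because the bound is only invoked at levels where $Ns_l^2\gtrsim 2^{lm}$, the complementary regime being the one where $a(N,x)\ge 1$ and the claim is vacuous. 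With this ingredient your optimisation over $L$ and over the level budgets does yield the three regimes of \eqref{def:aN}, including the logarithmic correction at $p=m/2$; the reduction of the case $x>1$ to boundedness of the distance is fine after the harmless rescaling that makes the diameter of the support at most one.
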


We can now prove Theorems \ref{thm:typ-MF} and \ref{thm:typ-MFQN}. Since the proofs are rather similar to the ones used in \cite{IGP} for the classical VP system, we invite the reader to consult that paper for more details.

\begin{proof}[Proof of Theorem~\ref{thm:typ-MF}]
The idea of the proof is to show that, for the choice $r = c N^{- \gamma}$,
\be \label{full-prob}
\bb{P} \left ( \limsup_{N \to \infty} \frac{W^2_2(\mu^N_0, f_0)}{r^{d + 2 + C_{T_*, M}|\log r|^{-1/2}}} < 1 \right ) = 1.
\ee
Then we may apply Theorem~\ref{thm:MFL} to conclude that the mean field limit holds on this full probability event. To prove \eqref{full-prob}, observe that
\be
\bigcup_{n} \bigcap_{N \geq n} A^c_N \subset \left \{ \limsup_{N \to \infty} \frac{W^2_2(\mu^N_0, f_0)}{r^{d + 2 + C_{T_*, M}|\log r|^{-1/2}}} < 1 \right \},
\ee
where $A_N$ is the event
\be
A_N := \left \{ W^2_2(\mu^N_0, f_0) > \frac{1}{2} r^{d + 2 + C_{T_*, M}|\log r|^{-1/2} } \right \}.
\ee
Since $\left ( \bigcup_{n} \bigcap_{N \geq n} A^c_N \right )^c = \bigcap_{n} \bigcup_{N \geq n} A_N$, by the Borel-Cantelli lemma it suffices to show that
\be
\label{eq:BC}
\sum_N \bb{P}(A_N) < \infty .
\ee
We estimate $\bb{P}(A_N)$ using Theorem~\ref{thm:conc-mom}, with 
\be 
x_N = \frac{1}{2} r^{d + 2 + C_{T_*, M}|\log r|^{-1/2}}= c N^{- \gamma(d+2) - C_{T_*, M,\gamma}|\log N|^{-1/2}}.
\ee
Note that $p=2$ and $m = 2d$. The assumptions on $\gamma$ in \eqref{def:gamma} are chosen such that
\be
\sum_N a(N, x_N) + b(N, x_N) < \infty.
\ee
In this way \eqref{eq:BC} holds and the result follows.

\end{proof}

\begin{proof}[Proof of Theorem~\ref{thm:typ-MFQN}]
We use the same strategy as for Theorem~\ref{thm:typ-MF}. However, since the constants in Theorem~\ref{thm:conc-mom} depend on the $k$th moment of $f_{\e}(0)$ which may change with $\e$, we will instead use the compact version of the estimate from Theorem~\ref{thm:conc-cpct}. Recall that we already assumed that $f_{\e}(0)$ were compactly supported with the support in velocity growing no faster than $e^{C \e^{-2}}$ for some $C$. We perform a scaling argument in order to work with measures that are supported in $(-1, 1]^{2d}$.

Given a probability measure $\nu$, let $\mc{S}_R [\nu]$ be the measure such that for any $A_X \in \mc{B}(\bt^d)$ and $B_V \in \mc{B}(\br^d)$,
\be
\mc{S}_R[\nu] (A_X \times B_V) = \nu(A_X \times R B_V) .
\ee
This maps measures supported in $[-1, 1]^d \times [-R, R]^d$ to measures supported in $[-1,1]^{2d}$. By \cite[Lemma 7.6]{IGP}, if $\nu_1$, $\nu_2$ are measures on $\bt^d \times \br^d$, then for any $p \in [1, \infty)$,
\be \label{Est_W_scale}
W_p(\nu_1, \nu_2) \leq R\, W_p(\mc{S}_R[\nu_1], \mc{S}_R[\nu_2]) . 
\ee
Note also that if $(Z^{(R)}_i)_{i=1}^N$ are $N$ independent samples from $\mc{S}_R[\nu]$, then $(Z^{(R)}_i)_{i=1}^N$ has the same law as $(X_i, \frac{1}{R} V_i)_{i=1}^N$, where $(Z_i)_{i=1}^N = (X_i, V_i)_{i=1}^N$ are $N$ independent samples from $\nu$.
It is therefore enough to show that
\be
\label{eq:BC2}
\sum_N \bb{P}(A_N) < \infty,
\ee
where $A_N$ denotes the event
\be
A_N : = \left \{ W^2_2(\mc{S}_{e^{-C \e^{-2}}}[\mu^N_\e(0)],\  \mc{S}_{e^{-C \e^{-2}}}[f_{0,\e}]) > \frac{1}{2} r^{d + 2 + \eta } \exp(-2C \e^{-2}) \right \} .
\ee
We observe that the assumption
\be
r < \left [ \mathbf{\overline{\exp}_3}{(K \e^{-2})} \right ]^{-1},
\ee
implies that
\be
\exp{(-C \e^{-2})} > c \left ( \log \log N \right )^{-\zeta} > c N^{-\alpha} 
\ee
for $\zeta$ depending on $C$ and $K$, any $\alpha > 0$ and $c$ depending on $\alpha$, $C$ and $K$. We then apply Theorem~\ref{thm:conc-cpct} with the choice
\be
x_N = c N^{-\gamma(d+2+\eta) - \alpha}.
\ee
The assumption \eqref{def:gamma-QN} on $\gamma$ implies that it is possible to find $\eta > 0$ such that
\be
\sum_N a(N, x_N) < \infty.
\ee
This yields \eqref{eq:BC2},
which completes the proof.

\end{proof}

 {\it Acknowledgements:} This work was supported by the UK Engineering and Physical Sciences Research Council (EPSRC) grant EP/L016516/1 for the University of Cambridge Centre for Doctoral Training, the Cambridge Centre for Analysis; and the European Research Council (ERC) under the European Union's Horizon 2020 research and innovation programme (grant agreement No 726386).

\bibliography{VPME-bib-11}

\begin{thebibliography}{10}

\bibitem{Amb}
L.~Ambrosio.
\newblock {Transport equation and Cauchy problem for Non-smooth Vector Fields}.
\newblock In B.~Dacorogna and P.~Marcellini, editors, {\em Calculus of
  Variations and Nonlinear Partial Differential Equations: Lectures given at
  the C.I.M.E. Summer School held in Cetraro, Italy, June 27 - July 2, 2005}.
  Springer-Verlag Berlin Heidelberg, 2008.

\bibitem{Bardos-Besse}
C.~Bardos and N.~Besse.
\newblock {The Cauchy problem for the Vlasov-Dirac-Benney equation and related
  issues in fluid mechanics and semi-classical limits}.
\newblock {\em Kinet. Relat. Models}, 6(4):893--917, 2013.

\bibitem{BGNS18}
C.~Bardos, F.~Golse, T.~T. Nguyen, and R.~Sentis.
\newblock The {M}axwell-{B}oltzmann approximation for ion kinetic modeling.
\newblock {\em Phys. D}, 376/377:94--107, 2018.

\bibitem{Bardos-Nouri}
C.~Bardos and A.~Nouri.
\newblock {A Vlasov equation with Dirac potential used in fusion plasmas}.
\newblock {\em J. Math. Phys.}, 53(11):115621, 2012.

\bibitem{BR}
J.~Batt and G.~Rein.
\newblock {Global classical solutions of the periodic Vlasov-Poisson system in
  three dimensions}.
\newblock {\em C. R. Acad. Sci. Paris S{\'{e}}r. I Math.}, 313(6):411--416,
  1991.

\bibitem{BFJJ}
M.~Bossy, J.~Fontbona, P.-E. Jabin, and J.-F. Jabir.
\newblock {Local existence of analytical solutions to an incompressible
  Lagrangian stochastic model in a periodic domain}.
\newblock {\em Comm. Partial Differential Equations}, 38(7):1141--1182, 2013.

\bibitem{Bouchut}
F.~Bouchut.
\newblock {Global weak solution of the Vlasov-Poisson system for small
  electrons mass}.
\newblock {\em Comm. Partial Differential Equations}, 16(8-9):1337--1365, 1991.

\bibitem{Braun-Hepp}
W.~Braun and K.~Hepp.
\newblock {The Vlasov dynamics and its fluctuations in the {$1/N$} limit of
  interacting classical particles}.
\newblock {\em Comm. Math. Phys.}, 56(2):101--113, 1977.

\bibitem{BG}
Y.~Brenier and E.~Grenier.
\newblock {Limite singuli{\`{e}}re du syst{\`{e}}me de Vlasov-Poisson dans le
  r{\'{e}}gime de quasi neutralit{\'{e}}: le cas ind{\'{e}}pendant du temps}.
\newblock {\em C. R. Acad. Sci. Paris S{\'{e}}r. I Math.}, 318(2):121--124,
  1994.

\bibitem{Dob}
R.~L. Dobrushin.
\newblock {Vlasov Equations}.
\newblock {\em Funktsional. Anal. i Prilozhen.}, 13(2):48--58, 1979.

\bibitem{Evans}
L.~C. Evans.
\newblock {\em {Partial differential equations}}, volume~19 of {\em Graduate
  Studies in Mathematics}.
\newblock American Mathematical Society, 2010.

\bibitem{FG}
N.~Fournier and A.~Guillin.
\newblock {On the rate of convergence in Wasserstein distance of the empirical
  measure}.
\newblock {\em Probab. Theory Related Fields}, 162(3-4):707--738, 2015.

\bibitem{GT}
D.~Gilbarg and N.~S. Trudinger.
\newblock {\em {Elliptic partial differential equations of second order}},
  volume 224 of {\em Grundlehren der mathematischen Wissenschaften}.
\newblock Springer-Verlag Berlin Heidelberg, 1977.

\bibitem{Golse}
F.~Golse.
\newblock {On the dynamics of large particle systems in the mean field limit}.
\newblock In A.~Muntean, J.~Rademacher, and A.~Zagaris, editors, {\em
  Macroscopic and Large Scale Phenomena: Coarse Graining, Mean Field Limits and
  Ergodicity}, volume~3 of {\em Lect. Notes Appl. Math. Mech.}, pages 1--144.
  Springer, 2016.

\bibitem{Grenier95}
E.~Grenier.
\newblock {Defect measures of the Vlasov-Poisson system in the quasineutral
  regime}.
\newblock {\em Comm. Partial Differential Equations}, 20(7-8):1189--1215, 1995.

\bibitem{Grenier96}
E.~Grenier.
\newblock {Oscillations in quasineutral plasmas}.
\newblock {\em Comm. Partial Differential Equations}, 21(3-4):363--394, 1996.

\bibitem{IGP-WP}
M.~Griffin-Pickering and M.~Iacobelli.
\newblock Global well-posedness in 3-dimensions for the {Vlasov--Poisson}
  system with massless electrons.
\newblock Preprint, 2018, 1810.06928.

\bibitem{IGP}
M.~Griffin-Pickering and M.~Iacobelli.
\newblock A mean field approach to the quasi-neutral limit for the
  {Vlasov--Poisson} equation.
\newblock {\em SIAM J. Math. Anal.}, 50(5):5502--5536, 2018.

\bibitem{HKH}
D.~Han-Kwan and M.~Hauray.
\newblock Stability issues in the quasineutral limit of the one-dimensional
  {Vlasov-Poisson} equation.
\newblock {\em Comm. Math. Phys.}, 334(2):1101--1152, 2015.

\bibitem{IHK2}
D.~Han-Kwan and M.~Iacobelli.
\newblock Quasineutral limit for {Vlasov-Poisson} via {Wasserstein} stability
  estimates in higher dimension.
\newblock {\em J. Differential Equations}, 263(1):1--25, 2017.

\bibitem{IHK1}
D.~Han-Kwan and M.~Iacobelli.
\newblock {The quasineutral limit of the Vlasov-Poisson equation in Wasserstein
  metric}.
\newblock {\em Commun. Math. Sci.}, 15(2):481--509, 2017.

\bibitem{HKR}
D.~Han-Kwan and F.~Rousset.
\newblock {Quasineutral limit for Vlasov-Poisson with Penrose stable data}.
\newblock {\em Ann. Sci. {\'{E}}c. Norm. Sup{\'{e}}r. (4)}, 49(6):1445--1495,
  2016.

\bibitem{Hauray-Jabin}
M.~Hauray and P.-E. Jabin.
\newblock Particle approximation of {Vlasov} equations with singular forces:
  propagation of chaos.
\newblock {\em Ann. Sci. {\'{E}}c. Norm. Sup{\'{e}}r. (4)}, 48(4):891--940,
  2015.

\bibitem{Horst}
E.~Horst.
\newblock {Global solutions of the relativistic Vlasov-Maxwell system of plasma
  physics}.
\newblock {\em Dissertationes Math. (Rozprawy Mat.)}, 292, 1990.

\bibitem{Laz}
D.~Lazarovici.
\newblock The {Vlasov-Poisson} dynamics as the mean field limit of extended
  charges.
\newblock {\em Comm. Math. Phys.}, 347(1):271--289, 2016.

\bibitem{Laz-Pickl}
D.~Lazarovici and P.~Pickl.
\newblock {A mean field limit for the Vlasov-Poisson system}.
\newblock {\em Arch. Ration. Mech. Anal.}, 225(3):1201--1231, 2017.

\bibitem{Lions-Perthame}
P.~L. Lions and B.~Perthame.
\newblock {Propagation of moments and regularity for the 3-dimensional
  Vlasov-Poisson system}.
\newblock {\em Invent. Math.}, 105(2):415--430, 1991.

\bibitem{Loep}
G.~Loeper.
\newblock {Uniqueness of the solution to the Vlasov-Poisson system with bounded
  density}.
\newblock {\em J. Math. Pures Appl. (9)}, 86(1):68--79, 2006.

\bibitem{BM}
A.~Majda and A.~Bertozzi.
\newblock {\em {Vorticity and Incompressible Flow}}, volume~27 of {\em
  Cambridge Texts in Applied Mathematics}.
\newblock Cambridge University Press, 2002.

\bibitem{Neunzert}
H.~Neunzert.
\newblock {An introduction to the nonlinear Boltzmann-Vlasov equation}.
\newblock In {\em Kinetic Theories and the Boltzmann Equation}, volume 1048 of
  {\em Lecture Notes in Math.}, pages 60--110. Springer, Berlin, Heidelberg,
  1984.

\bibitem{Neunzert-Wick}
H.~Neunzert and J.~Wick.
\newblock {Die Approximation der L{\"{o}}sung von
  Integro-Differentialgleichungen durch endliche Punktmengen}.
\newblock In {\em Numerische Behandlung nichtlinearer Integrodifferential-und
  Differentialgleichungen.}, volume 395 of {\em Lecture Notes in Math.}, pages
  275--290, Berlin, Heidelberg, 1974. Springer.

\bibitem{Pfaffelmoser}
K.~Pfaffelmoser.
\newblock {Global classical solutions of the Vlasov-Poisson system in three
  dimensions for general initial data}.
\newblock {\em J. Differential Equations}, 95(2):281--303, 1992.

\bibitem{Rein-book}
G.~Rein.
\newblock {Collisionless kinetic equations from astrophysics - the
  Vlasov-Poisson system}.
\newblock In {\em Handbook of Differential Equations: Evolutionary Equations},
  volume III, chapter~5, pages 383--476. Elsevier/North Holland, Amsterdam,
  2007.

\bibitem{Schaeffer}
J.~Schaeffer.
\newblock {Global existence of smooth solutions to the Vlasov-Poisson system in
  three dimensions}.
\newblock {\em Comm. Partial Differential Equations}, 16(8-9):1313--1335, 1991.

\bibitem{Titch}
E.~Titchmarsh.
\newblock {\em Eigenfunction Expansions Associated with Second-Order
  Differential Equations, Part II}.
\newblock Oxford University Press, 1958.

\bibitem{Ukai-Okabe}
S.~Ukai and T.~Okabe.
\newblock On classical solutions in the large in time of two-dimensional
  {Vlasov's} equation.
\newblock {\em Osaka J. Math.}, 15(2):245--261, 1978.

\bibitem{Vil03}
C.~Villani.
\newblock {\em {Topics in optimal transportation}}, volume~58 of {\em Graduate
  Studies in Mathematics}.
\newblock American Mathematical Society, Providence, RI, 2003.

\end{thebibliography}
\bibliographystyle{habbrv}

\end{document}